\theoremstyle{plain}
\newtheorem{thm}{Theorem}[section]
\newtheorem{cor}[thm]{Corollary}
\newtheorem{prop}[thm]{Proposition}
\newtheorem{lem}[thm]{Lemma}
\theoremstyle{definition}
\newtheorem{defn}[thm]{Definition}
\newtheorem{example}[thm]{Example}
\newtheorem{aDD^+m}[thm]{ADD^+endum}
\theoremstyle{remark}
\newtheorem{rmk}[thm]{Remark}
\newcommand{\bbC}{\mathbb{C}} 
\newcommand{\bbR}{\mathbb{R}} 
\newcommand*{\defeq}{\mathrel{\vcenter{\baselineskip0.5ex \lineskiplimit0pt
			\hbox{\scriptsize.}\hbox{\scriptsize.}}}%
	=}
\title{Analyticity of the Hausdorff dimension and 
metric structures on Misiurewicz families of polynomials}
\author{Fabrizio Bianchi}
\address{Dipartimento di Matematica, Università di Pisa, Largo Bruno Pontecorvo 5, 56127 Pisa, Italy}
 \email{fabrizio.bianchi$@$unipi.it}
\author{Yan Mary He}
\address{Department of Mathematics\\
	University of Oklahoma\\
	Norman, OK 73019}
\email{he$@$ou.edu}
\date{\today}
\begin{document}

\begin{abstract}
Consider a holomorphic
family
$(f_\lambda)_{\lambda \in \Lambda}$
of polynomial
maps on $\mathbb C$
with the property that a critical point of $f_\lambda$ is persistently preperiodic to a repelling periodic point of $f_\lambda$.
Let $\Omega$ be a bounded stable component of $\Lambda$ with the property that, for all $\lambda \in \Omega$, all the other critical points of $f_\lambda$
belong to attracting basins. 
In this paper, we introduce a dynamically meaningful 
geometry on $\Omega$ by constructing a natural path metric on $\Omega$ coming from a
2-form $\langle \cdot, \cdot \rangle_G$. Our construction uses thermodynamic formalism. A key ingredient is the
spectral gap of adapted transfer operators on suitable Banach spaces, which also implies the analyticity of $\langle \cdot, \cdot \rangle_G$ on the unit tangent bundle of $\Omega$.
As part of our construction, we recover
a result of Skorulski and
Urba{\'n}ski stating
that the Hausdorff dimension of the Julia set of $f_\lambda$
varies analytically over $\Omega$.
\end{abstract}

\maketitle

\section{Introduction}
Let $S$ be a closed surface of genus 
$g \geq 2$.
The Teichm\"uller space $T(S)$ of $S$, which parametrizes the hyperbolic structures on $S$, plays a fundamental role in modern mathematics.
The topology and geometry of
Teichm\"uller spaces
have been investigated from numerous viewpoints. Ahlfors 
\cite{Ahlfors_QCMappings}
proved that 
$T(S)$ is homeomorphic to $\mathbb R^{6g-6}$.
On the other hand, $T(S)$
carries a number of natural metrics defined from different perspectives, e.g., the Teichm\"uller metric, the Weil-Petersson metric, and the Thurston metric; see
for instance \cite{Hubbard06,ImaTan}. 

From the perspective of Sullivan's dictionary \cite{Sullivan85, McMullen94, DSU17},
stable components
(in the sense of \cite{Lyu83typical,MSS83})
of moduli spaces of 
rational maps (seen as holomorphic dynamical systems on the Riemann sphere) are the natural counterparts in complex dynamics of Teichm\"uller spaces of closed surfaces. This correspondence provides both the motivation and the tools to examine
both the topology and the geometry 
of such stable components, in parallel to the theories of Teichm\"uller spaces.

Ever since McMullen's \cite{McMullen08} 
construction of the Weil-Petersson metric on the space of degree $d \ge 2$ Blaschke products in complex dynamics, there has been an extensive study of Weil-Petersson metrics on stable components of moduli spaces. Ivrii \cite{Ivrii14} studied the completeness properties of McMullen's metric for degree 2 Blaschke products. 
Nie and the second author
\cite{HeNie23}
constructed Weil-Petersson metrics on general hyperbolic components in moduli spaces of rational maps. Lee, Park, and the second author
\cite{HLP23,HLP25}
studied the degeneracy
loci of the Weil-Petersson metric on spaces of quasi-Blaschke products. 
In our earlier paper \cite{BH24}, we 
studied the Weil-Petersson metric on stable components of 
polynomials families with a persistent parabolic point.

In this paper, we extend the theory of Weil-Petersson metrics in complex dynamics to stable components of a {\it Misiurewicz family} of polynomials (i.e., a family where some critical point is persistently preperiodic to some repelling periodic point).
The construction of the Weil-Petersson metric
requires a deep analysis of the
spectral properties of adapted transfer operators to deal with the presence of critical points in the Julia sets and the lack of uniform hyperbolicity.

\subsection{Statement of results}
Denote by ${\rm Poly}^{cm}_D$ (resp.\ ${\rm Rat}^{cm}_D$) the space of critically marked degree $D \ge 2$ polynomials (resp.\ rational maps). 
A not necessarily closed
subfamily $\Lambda$ of ${\rm Poly}^{cm}_D$ (resp.\ ${\rm Rat}^{cm}_D$) is a {\it Misiurewicz family} if it is the open subset of a 
family given by a finite number of 
critical relations of the form
\[
f^{n_i} (c_{i} (\lambda)) = f^{n_i+m_i} (c_{i}(\lambda)) = r_{i}(\lambda),
\]
where $c_i(\lambda)$ is the $i$-th marked critical point of $f_\lambda$, $n_i,m_i$ are positive integers,
and
each $r_{i}(\lambda)$ is a repelling periodic point for every $f_\lambda\in \Lambda$.
The dynamics of a Misiurewicz polynomial $f_\lambda \in \Lambda$ on its Julia set is not uniformly hyperbolic, due to the presence of critical points.
We say that a stable component $\Omega\Subset \Lambda$ is {\it $\Lambda$-hyperbolic} if,
for every $\lambda \in \Omega$, every critical point $c_j(\lambda)$ such that $c_j$ is active on $\Lambda$ is contained in the basin of some attracting cycle for $f_\lambda$.

Our main goal in this paper
is to define a
Weil-Petersson
path-metric
on a bounded $\Lambda$-hyperbolic component $\Omega$ of a Misiurewicz subfamily $\Lambda$ of ${\rm poly}_D^{cm}$. To this end, we construct a positive semi-definite symmetric bilinear form $\langle \cdot, \cdot \rangle_G$ on each tangent space $T_{\lambda}\Omega$, which will turn out to be equivalent to the pressure 2-form.
The construction of $\langle \cdot, \cdot \rangle_G$ is valid on any $\Lambda$-hyperbolic component of a Misiurewicz subfamily $\Lambda$ of ${\rm rat}_D^{cm}$.

As the first step of our construction of the 2-form $\langle \cdot, \cdot \rangle_G$, we show that the Hausdorff dimension function is real-analytic.

\begin{thm}\label{thm_ana_main}
Let $\Omega$ be a $\Lambda$-hyperbolic component of a Misiurewicz subfamily $\Lambda$ of ${\rm rat}_D^{cm}$. The Hausdorff dimension function $\delta \colon \Omega \to (0,2)$ sending $\lambda$ to the Hausdorff dimension of the Julia set of $f_\lambda$ is real-analytic.
\end{thm}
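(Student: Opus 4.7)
The plan is to apply thermodynamic formalism through Bowen's formula: for every $\lambda \in \Omega$, the Hausdorff dimension $\delta(\lambda)$ is the unique $t > 0$ at which the topological pressure function $t \mapsto P_\lambda(-t \log |f'_\lambda|)$ vanishes. It therefore suffices to prove that the two-variable pressure
\[
P(\lambda, t) \defeq P_\lambda(-t \log |f'_\lambda|)
\]
is real-analytic on a neighborhood of $\{(\lambda, \delta(\lambda)) : \lambda \in \Omega\}$ in $\Omega \times \bbR_{>0}$, together with the transversality condition $\partial_t P(\lambda, \delta(\lambda)) \neq 0$, and then invoke the real-analytic implicit function theorem.

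To establish analyticity of $P(\lambda, t)$, I would realize $e^{P(\lambda, t)}$ as the leading eigenvalue of the transfer operator
\[
\mathcal L_{\lambda, t} \phi(x) = \sum_{f_\lambda(y) = x} |f'_\lambda(y)|^{-t}\, \phi(y),
\]
acting on an adapted Banach space $\mathcal B$ of observables on the Julia set $J_\lambda$. Because $f_\lambda$ is Misiurewicz and $\Omega$ is $\Lambda$-hyperbolic, the only critical points relevant to the Julia dynamics are those whose orbits land on the persistent repelling cycles; all other active critical points lie in attracting basins and so are uniformly separated from $J_\lambda$. This is exactly the hypothesis needed to construct a Banach space on which $\mathcal L_{\lambda, t}$ has a spectral gap, even in the absence of uniform hyperbolicity. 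The construction should proceed by holomorphic motion of the Julia sets over $\Omega$, so that the family $(\lambda, t) \mapsto \mathcal L_{\lambda, t}$ becomes real-analytic as a map into the bounded operators on a single fixed $\mathcal B$. Once this is in place, Kato's analytic perturbation theory applied to the simple isolated eigenvalue yields the real-analyticity of $(\lambda, t) \mapsto e^{P(\lambda, t)}$, hence of $P(\lambda, t)$.

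For the transversality condition, a standard computation differentiating the pressure gives
\[
\partial_t P(\lambda, t)\big|_{t = \delta(\lambda)} = -\int_{J_\lambda} \log |f'_\lambda|\, d\mu_{\delta(\lambda)},
\]
where $\mu_{\delta(\lambda)}$ is the equilibrium state of $-\delta(\lambda) \log |f'_\lambda|$. The right-hand side is $-\chi_\lambda$, minus the Lyapunov exponent of this measure, which is strictly positive for a Misiurewicz map (the equilibrium state is not supported on the finite forward orbits of the critical points, and $f_\lambda$ expands on an invariant set disjoint from the critical set in measure-theoretic sense). Hence $\partial_t P < 0$ at $t = \delta(\lambda)$, and the implicit function theorem yields the real-analyticity of $\delta$ on $\Omega$, landing in $(0,2)$ by standard bounds on dimensions of Julia sets of rational maps.

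The main obstacle, and the place where the paper's technical novelty is concentrated, is constructing the Banach space $\mathcal B$ and verifying the spectral gap together with analytic dependence of $\mathcal L_{\lambda, t}$ on both parameters. The presence of critical points inside $J_\lambda$ rules off the shelf transfer-operator machinery for expanding maps; one instead needs weights or function-space norms that compensate near the preperiodic critical orbits. Once this analytic family of operators with uniformly controlled spectral gaps is in hand, the remainder of the proof — Kato perturbation, Bowen's formula, and the implicit function theorem — is essentially formal.
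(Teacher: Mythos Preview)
Your outline is precisely the paper's strategy: Bowen's formula reduces the problem to real-analyticity of the pressure $(t,\lambda)\mapsto p(t,\lambda)$, this is obtained from a spectral gap for the transfer operator together with Kato--Rellich perturbation, and then the implicit function theorem yields analyticity of $\delta$ (this is exactly Corollary~\ref{c:delta-ana}, deduced from Theorem~\ref{t:new-analytic-all}).

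For the obstacle you correctly flag at the end, the paper uses two specific devices worth naming. First, rather than weighted norms on $J_\lambda$, it passes to a \emph{tower extension} $T\colon\mathcal T\to\mathcal T$ over the nested critical annuli, following Makarov--Smirnov; the lifted dynamics is genuinely expanding, and the transfer operator has a spectral gap on the ordinary H\"older space $C^\kappa(\mathcal T)$ via a Lasota--Yorke inequality (Proposition~\ref{p:MS-gen}, Lemma~\ref{l:L4MS}), with the holomorphic motion lifted to the tower (Lemma~\ref{l:Hlambda-logq}). Second, your assertion that holomorphic motion makes $(\lambda,t)\mapsto\mathcal L_{\lambda,t}$ real-analytic as a map into bounded operators hides a genuine issue: the conjugacies $h_\lambda$ are only H\"older in the space variable, so the potential $-t\log|f'_\lambda\circ h_\lambda|$ is merely \emph{pointwise} holomorphic in $\lambda$, which does not by itself give operator-norm analyticity. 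The paper resolves this by extending the potentials to complex parameters through an embedding $\mathbb C^\ell\hookrightarrow\mathbb C^{2\ell}$ (Lemma~\ref{lem_6.3}, Proposition~\ref{p:SU-full}, Lemma~\ref{lem_314}), so that the operator family becomes holomorphic in the Kato sense and joint real-analyticity of the pressure follows.
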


In fact, Theorem \ref{thm_ana_main} is obtained as a corollary (see Corollary \ref{c:delta-ana}) of a stronger analyticity result;
see Theorem \ref{t:new-analytic-all}. We also remark that Theorem \ref{thm_ana_main} is already known; see for example Skorulski-Urbanski \cite{SkorulskiUrbanski14}. Our proof of Theorem \ref{t:new-analytic-all}, and hence of Theorem \ref{thm_ana_main},
is very different from Skorulski-Urbanski's proof \cite{SkorulskiUrbanski14}. The framework of our
proof will be crucial in later parts of the paper. More details of the proof strategies will be given in Section \ref{sec_proof_strategy}.

\medskip

We then construct the Weil-Petersson metric
on each tangent space $T_{\lambda}\Omega$. The construction follows the general framework of \cite{HeNie23, BH24}. 

\medskip

By construction, the 2-form $\langle \cdot, \cdot \rangle_G$ may not be non-degenerate; namely, there may exist a non-zero tangent vector $\vec{v}\in T_\lambda\Omega$ such that $\langle \vec{v}, \vec{v} \rangle_G = 0$. For example, in \cite{HLP23,HLP25}, Lee, Park and the second author studied the degeneracy
loci of the Weil-Petersson metric on spaces of quasi-Blaschke products. On the other hand, in \cite{HeNie23}, Nie and the second author gave a sharp condition under which the Weil-Petersson metric is non-degenerate on (uniformly) hyperbolic components of rational maps.
This result deeply exploited the uniform hyperbolicity
and in particular a result by Oh-Winter  \cite{Oh17}
on the distribution of the
multiplier of the periodic points.
Even though we cannot rely on that result in our Misiurewicz setting, we can still prove that $\langle \cdot, \cdot \rangle_G$ defines a path-metric on $\Omega$.

\begin{thm}\label{thm_main}
Let $\Omega$ be a bounded $\Lambda$-hyperbolic component of a Misiurewicz subfamily $\Lambda$ of ${\rm poly}_D^{cm}$.
Then the function $d_G: \Omega \times \Omega \to \mathbb R$ given by
\begin{equation*}
d_G(x,y) \defeq \inf_\gamma \int_0^1 \|\gamma'(t)\|_G dt
\end{equation*}
is a distance function. Here
the infimum is taken over all the $C^1$-paths $\gamma$ connecting $x$ to $y$ in $\Omega$.
\end{thm}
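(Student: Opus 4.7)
My plan would be to treat all the pseudo-metric properties (non-negativity, symmetry, triangle inequality, and $d_G(x,x)=0$) as formal consequences of the definition of $d_G$ as the path-length infimum of a seminorm built from $\langle\cdot,\cdot\rangle_G$; this leaves positive-definiteness as the only real content. Finiteness of $d_G(x,y)$ everywhere on $\Omega\times\Omega$ would follow from connectedness and local boundedness of $\|\cdot\|_G$, where local boundedness is a direct consequence of the real-analyticity of $\langle\cdot,\cdot\rangle_G$ on the unit tangent bundle, already established in the paper in the course of proving Theorem \ref{t:new-analytic-all}.

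For the positive-definiteness step, the strategy I would follow is to exhibit enough $d_G$-Lipschitz continuous functions on $\Omega$ to separate points. The right candidates are normalized log-multipliers at repelling cycles: for each repelling periodic point of a reference $f_{\lambda_0}$, $J$-stability gives a holomorphic motion $p(\lambda)$ of period $n_p$, and I would set $\varphi_p(\lambda)\defeq\tfrac{1}{n_p}\log|(f_\lambda^{n_p})'(p(\lambda))|$ and $\rho_p(\lambda)\defeq(f_\lambda^{n_p})'(p(\lambda))$. Using the variance interpretation $\|v\|_G^2=\sigma_\mu^2(\psi_v)$ of the pressure form with $\psi_v\defeq d_v\log|f_\lambda'|$ (against the equilibrium state $\mu$), the aim is to produce, from the spectral gap of the adapted transfer operator (the central technical input of Theorem \ref{t:new-analytic-all}), a quantitative Livsic-type bound of the shape
\[
|d_v\varphi_p-c_v|\le C\|v\|_G\qquad\text{uniformly over all repelling cycles } p,
\]
for a constant $c_v\in\bbR$ (morally the derivative of the Lyapunov exponent in direction $v$). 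Subtracting across cycles, $|d_v(\varphi_p-\varphi_q)|\le 2C\|v\|_G$ uniformly in $p,q$, and integration along $C^1$-paths would make each difference $\varphi_p-\varphi_q$ globally $d_G$-Lipschitz on $\Omega$.

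Assuming $d_G(x,y)=0$, the Lipschitz bounds would force $\varphi_p(x)-\varphi_q(x)=\varphi_p(y)-\varphi_q(y)$ for all pairs $p,q$, i.e., the existence of a single constant $K=K(x,y)\in\bbR$ with $|\rho_p(y)|=e^{n_pK}|\rho_p(x)|$ for every repelling cycle $p$. The persistent Misiurewicz repelling cycle $r_i$ of fixed period provides a distinguished holomorphic multiplier $\rho_i:\Omega\to\bbC^*$; combined with the boundedness of $\Omega$, the $\Lambda$-hyperbolicity of $\Omega$, and the rigidity of critically marked polynomial families parametrized by multipliers, I would argue that the only uniform rescaling compatible with a holomorphic family inside $\Omega$ is $K=0$. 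Then $|\rho_p(x)|=|\rho_p(y)|$ for all $p$; coupling this with $\lambda$-holomorphicity of each $\rho_p$ and standard polynomial rigidity via multipliers would give $x=y$.

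The step I expect to be the main obstacle is the uniform quantitative Livsic estimate with error independent of the cycle period, in this Misiurewicz setting where uniform hyperbolicity is absent because critical points sit on the Julia set. This is exactly what the spectral-gap theory for the adapted transfer operators, developed in the earlier parts of the paper, is designed to provide. A secondary difficulty will be upgrading the moduli equality $|\rho_p(x)|=|\rho_p(y)|$ to full complex equality of multipliers: the real pressure form only sees the real part of infinitesimal log-multipliers, so this step will require leveraging $\lambda$-holomorphicity of the $\rho_p$ and boundedness of $\Omega$ to exclude a pure Lyapunov-shift deformation direction.
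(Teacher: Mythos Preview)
Your route is genuinely different from the paper's, and the step you flag as the main obstacle is a real gap that the spectral gap alone does not close.

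The paper does \emph{not} construct $d_G$-Lipschitz separating functions. It proceeds in two steps. First, Proposition~\ref{prop_analyticity_form} shows that $(\lambda,\vec v)\mapsto\langle\vec v,\vec v\rangle_G$ is real-analytic on the unit tangent bundle (via Theorem~\ref{t:new-analytic-all}); an induction-on-dimension argument taken from \cite[Section~5.4]{BH24} then shows that any two points at $d_G$-distance zero already lie on a common real one-dimensional analytic submanifold of $\Omega$. Second, Proposition~\ref{p:non-deg-analytic-paths} rules this out: if $\|\gamma'(t)\|_G\equiv 0$ along a nontrivial $C^1$-path, the \emph{qualitative} Livsic theorem (Lemma~\ref{lem_K_equation}) yields the multiplier relation with a constant $K$, and averaging over periodic points via Lemma~\ref{lem:lyap}(1) together with $L(\lambda)\equiv\log D$ on a bounded component in ${\rm poly}_D^{cm}$ forces all $|\rho_p|$ to be constant along $\gamma$, contradicting the rigidity of \cite{JiXie23}. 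No quantitative Livsic estimate enters.

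The inequality $|d_v\varphi_p-c_v|\le C\|v\|_G$ you need, with $C$ uniform both in the cycle $p$ and in the base point $\lambda$, is not delivered by the spectral gap: that machinery controls correlations and the variance, not periodic-orbit averages in terms of the variance. At a \emph{fixed} $\lambda$ the bound does hold for some $C=C(\lambda)$ independent of $p$, since each $v\mapsto d_v\varphi_p-c_v$ is a linear functional on the finite-dimensional $T_\lambda\Omega$ vanishing on $\ker\|\cdot\|_G$ (by qualitative Livsic), and interior gradient estimates for the bounded harmonic functions $\varphi_p$ bound these functionals uniformly in the Euclidean dual norm. But $C(\lambda)$ is then controlled only by the inverse square root of the smallest \emph{nonzero} eigenvalue of the Gram matrix of $\langle\cdot,\cdot\rangle_G$, which can tend to zero as $\lambda$ approaches the degeneracy locus of the form; one therefore cannot integrate along paths to make $\varphi_p-\varphi_q$ globally $d_G$-Lipschitz. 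The analyticity-plus-dimension-reduction step is precisely how the paper sidesteps this; your endgame (pinning $K$ via Lyapunov constancy and invoking multiplier rigidity) is essentially Proposition~\ref{p:non-deg-analytic-paths}, but it only applies once one already has a single $C^1$ path of identically zero $\|\cdot\|_G$-speed, not directly from $d_G(x,y)=0$.
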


The above theorem parallels the result we obtained 
in \cite{BH24} for the case of parabolic families of polynomials. We note that the analysis in \cite{BH24} does not apply to the case of Misiurewicz maps. More specifically, for parabolic maps, the geometric potential is H\"older continuous, hence we could apply  
the machinery of \cite{BD23eq1, BD24eq2} to the transfer orerator associated to such weights and their perturbations.
For Misiurewicz maps, we will instead
consider suitable transfer operators on an enlarged tower space, where the dynamics becomes expanding.

\subsection{Strategies of the proofs} \label{sec_proof_strategy}
The main technical difficulty of the paper lies in the proof of Theorem \ref{t:new-analytic-all},
which states that 
a
certain pressure function $(t_1,\lambda_1, t_2, \lambda_2) \mapsto P(t_1,\lambda_1, t_2, \lambda_2)$ related to two geometric potentials
is real-analytic. As consequences of Theorem \ref{t:new-analytic-all}, we obtain Theorem \ref{thm_ana_main} as well as the analyticity of the two-form $\langle \cdot, \cdot \rangle_G$ on the unit tangent bundle of $\Omega$. 

To prove Theorem \ref{t:new-analytic-all}, we first consider
a suitable extension
$T \colon \mathcal{T} \to \mathcal{T}$
of the dynamics to an enlarged \emph{tower space}, 
see Section \ref{sec_3.2_tower},
which provides a way to deal with critical points in the Julia set for a single Misiurewicz map. Since we will work with a stable component of Misiurewicz maps, we generalize the work
of Makarov-Smirnov \cite{MakSmir03} to a holomorphic family of tower maps $\{T_\lambda \colon \mathcal{T}_\lambda \to \mathcal{T}_\lambda\}_{\lambda\in \Omega}$. A key observation is that the H\"older continuous conjugation between Julia sets in the same 
$\Lambda$-hyperbolic 
component extends to a H\"older continous conjugation between the corresponding tower dynamics; see Lemma \ref{l:Hlambda-logq}.

For $t_i \in \mathbb R, \lambda_i \in \Omega$, we construct a 4-parameter family of transfer operators $\mathcal{L}_{t_1,\lambda_1,t_2,\lambda_2} \colon C^\kappa(\mathcal{T}) \to C^\kappa(\mathcal{T})$ acting on the Banach space of $\kappa$-H\"older continuous functions of $\mathcal{T}$. The four parameters of the transfer operators come from the potential functions; namely, we consider a 4-parameter family of potentials $\zeta_{(t_1,\lambda_1,t_2,\lambda_2)}$ to define $\mathcal{L}_{t_1,\lambda_1,t_2,\lambda_2} = \mathcal{L}_{\zeta_{(t_1,\lambda_1,t_2,\lambda_2)}}$. In fact, our potential $\zeta_{(t_1,\lambda_1,t_2,\lambda_2)}$ is of the form $\zeta_{(t_1,\lambda_1)} + \zeta_{(t_2,\lambda_2)}$, where each $\zeta_{(t_i,\lambda_i)}$ is a modified version of the geometric potential $-t_i\log|f_{\lambda_i}'|$.

Then an important step is to prove that such transfer operators have a {\it spectral gap} for certain ranges of the parameters; see Proposition \ref{p:MS-gen}. In particular, the spectral gap property follows from a Lasota-Yorke type of estimate,
which is locally uniform in all the parameters $t_1, \lambda_1, t_2, \lambda_2$.

To show that the pressure function is indeed analytic, and not only separately analytic in its variables, we will employ an extension argument, see for instance \cite{SU10,UZ04real}.
More specifically, we show that we can extend the 4-parameter family of potential functions  $\zeta_{t_1,\lambda_1,t_2,\lambda_2}$ to complex parameters $t_1,\lambda_1,t_2,\lambda_2$, in such a way that the 4-parameter family of transfer operators $\mathcal{L}_{t_1,\lambda_1,t_2,\lambda_2}$ becomes a {\it holomorphic} family of operators; see Proposition \ref{p:SU-full} and Lemma \ref{lem_314}. Then it follows from the perturbation theory that the eigenvalue, and therefore the pressure, varies real-analytically with the parameters.

\subsection{Organization of the paper}
The paper is organized as follows. Section \ref{sec_2} collects basic facts about Misiurewicz maps and $\Lambda$-hyperbolic components. We prove Theorem \ref{t:new-analytic-all} and therefore Theorem \ref{thm_ana_main} in Section \ref{sec_3}. We construct the 2-form $\langle \cdot, \cdot \rangle_G$ and the pressure form in Section \ref{sec_4}. Finally, we prove Theorem \ref{thm_main} in Section \ref{sec_5}.

\subsection*{Acknowledgements}
The authors would like to thank the Institute for Advanced Study, Princeton,
for the support and  hospitality
through the Summer Collaborators Program.
This project has received funding from
 the
 Programme
 Investissement d'Avenir
(ANR QuaSiDy /ANR-21-CE40-0016,
ANR PADAWAN /ANR-21-CE40-0012-01,
ANR TIGerS /ANR-24-CE40-3604),
 from the MIUR Excellence Department Project awarded to the Department of Mathematics of the University of Pisa, CUP I57G22000700001,
 and
 from the PHC Galileo project G24-123.
The first
author is affiliated to the GNSAGA group of INdAM.

\section{Misiurewicz families and $\Lambda$-hyperbolic components}\label{sec_2}
Let $D\ge 2$ be an integer. Let ${\rm Rat}^{cm}_D$ (resp.\ ${\rm Poly}^{cm}_D$) be the space of degree $D$ rational maps (resp.\ polynomials) with marked critical points $c_1,
\ldots, c_{2D-2}$ (resp.\ $c_1, \ldots, c_{D-1}$) and ${\rm rat}^{cm}_D$ (resp.\ ${\rm poly}^{cm}_D$)
the moduli space obtained from ${\rm Rat}^{cm}_D$ (resp.\
${\rm Poly}^{cm}_D$) by taking the quotient by all M\"obius 
(resp.\ affine)
conjugacies. For every 
$\lambda \in {\rm Rat}^{cm}_D$ (resp.\ ${\rm Poly}^{cm}_D$), we denote by $f_\lambda$ the corresponding 
rational map (resp.\ polynomial),
by $\mu_\lambda$ its unique measure of maximal entropy \cite{FLM83,Lyubich82,Lyu83entropy}, and by $L(\lambda) = \int \log |f'_\lambda|\mu_\lambda$ the Lyapunov exponent of $\mu_\lambda$. The following lemma collects the properties of the function $L(\lambda)$ that we shall need in the sequel.  Observe
that, as $\log |f'_\lambda|$ is not necessarily continuous on the support of $\mu_\lambda$, the first assertion is not a direct
consequence of the equidistribution of the periodic 
points with respect to $\mu_\lambda$ \cite{Lyubich82}, but requires a more quantitive equidistribution, see 
for instance \cite{BDM,BD19}.
The second
assertion is the classical Przytycki formula
\cite{Przytycki85} for polynomials.

\begin{lem}\label{lem:lyap}
For every $\lambda \in {\rm Rat}^{cm}_D$, we have
\begin{itemize}
\item[{\it (1)}] 
$L(\lambda) =\lim_{n\to \infty} D^{-n}
\sum_{x \in {\rm Pern}(\lambda)} \log |f'_\lambda (x)|$,
\end{itemize}
where ${\rm Pern}(\lambda)$ is the set of 
periodic points of $f_\lambda$ of
(exact or not exact) 
period $n$. 
Moreover, for every $\lambda \in {\rm Poly}^{cm}_D$, we 
also have
\begin{itemize}
\item[{\it (2)}] 
$L(\lambda)= \log D + \sum_{j=1}^{D-1} G_\lambda (c_j (\lambda))$,
\end{itemize}
where $G_\lambda(z) \defeq \lim_{n \to \infty} D^{-n}
\log \max\{1, |f^n_\lambda(z)|\}$ is the
\emph{Green function}
of $f_\lambda$.
\end{lem}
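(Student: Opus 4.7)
The two assertions of Lemma~\ref{lem:lyap} are of different natures, so I would treat them separately.

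For assertion~(1), the natural starting point is Lyubich's equidistribution of periodic points \cite{Lyubich82}: the averaged counting measures $\mu_n := D^{-n} \sum_{x \in {\rm Per}_n(\lambda)} \delta_x$ converge to $\mu_\lambda$ in the weak-$*$ topology, so for any \emph{continuous} test function $\psi$ one has $\int \psi \, d\mu_n \to \int \psi \, d\mu_\lambda$. The obstacle, flagged in the paper itself, is that $\log|f'_\lambda|$ has logarithmic singularities at the critical points, which may lie in the support of $\mu_\lambda$. My plan is to decompose
\[
\log|f'_\lambda(z)| = \varphi(z) + \sum_{c \in \mathrm{Crit}(f_\lambda)} \mathrm{ord}_c(f'_\lambda) \cdot \log |z-c|,
\]
where $\varphi$ is continuous on $\mathbb{P}^1$; the continuous part is handled by Lyubich. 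For each logarithmic summand, I would invoke the quantitative equidistribution of \cite{BDM,BD19}, which bounds $|\int \psi \, d\mu_n - \int \psi \, d\mu_\lambda|$ exponentially in $n$ for $\psi$ ranging in a suitable quasi-plurisubharmonic class containing the functions $\log|z-a|$. Summing over the critical points and passing to the limit gives $\int \log|f'_\lambda| \, d\mu_n \to L(\lambda)$.

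For assertion~(2), I would follow Przytycki's original argument \cite{Przytycki85}. The key ingredients are the functional equation $G_\lambda \circ f_\lambda = D\, G_\lambda$ on $\mathbb{C}$, the potential-theoretic identity $dd^c G_\lambda = \mu_\lambda$ as currents on the basin of infinity, and the factorization
\[
f'_\lambda(z) = D \cdot \ell(f_\lambda) \prod_{j=1}^{D-1}(z - c_j(\lambda)),
\]
where $\ell(f_\lambda)$ is the leading coefficient. Applying $\log$ and integrating against $\mu_\lambda$, an integration by parts against the Green potential produces the term $\log D$ from the leading coefficient and, from each critical factor $\log|z - c_j(\lambda)|$, precisely $G_\lambda(c_j(\lambda))$ via the defining property of $G_\lambda$ as the logarithmic potential of the equilibrium measure.

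The main technical hurdle clearly lies in part~(1): one must verify that the quantitative bounds of \cite{BDM,BD19} apply to the specific quasi-psh singular weights $\log|z-c|$ and that no mass of $\mu_n$ concentrates at the critical points fast enough to spoil the exponential error term. This amounts to extracting and applying the correct statement from the cited papers, which address exactly such test functions. Part~(2) is essentially a citation of Przytycki's formula, so the substantive work is confined to part~(1).
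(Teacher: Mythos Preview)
Your proposal is correct and matches the paper's own treatment: the paper does not give a detailed proof but simply points out that (1) follows from the quantitative equidistribution results of \cite{BDM,BD19} (precisely because $\log|f'_\lambda|$ fails to be continuous, as you note), and that (2) is the classical Przytycki formula \cite{Przytycki85}. Your sketch fleshes out exactly these citations, so there is nothing to add.
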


Let now $\Lambda_0$ be an algebraic subfamily of ${\rm Rat}^{cm}_D$ (resp.\ ${\rm Poly}^{cm}_D$).

\begin{defn}
We say that $\Lambda_0$ is given by
\emph{critical relations}  if it is given by a finite number of non-trivial
equations of
the form
\[
f^{n_1}_\lambda 
(c_{i_1}(\lambda)) 
= f^{n_2}_\lambda (c_{i_2}(\lambda)).
\]
We say that $\Lambda$
is a 
\emph{Misiurewicz family}
if it is the open subset of a 
family given by a finite number of 
critical relations of the form
\[
f^{n_i} (c_{i} (\lambda)) = f^{n_i+m_i} (c_{i}(\lambda)) = r_{i}(\lambda),
\]
where each $r_{i}(\lambda)$ is a repelling periodic point for every $f_\lambda\in \Lambda$
and each $m_i$ is strictly positive.
\end{defn}

We refer to 
\cite{BB09,BB11,demarco01,Dujardin14,DujardinFavre08,GOV19,Okuyama14}
for the description and
distribution in the parameter space  of these submanifolds 
and 
to
\cite{Berteloot13, BBD18, B19, Dujardin14, Levin82}
for their role in the
understanding of the bifurcation phenomena and related problems.

\medskip

 It is clear that the quotient by 
  M\"obius or
 affine conjugacies is well-defined on Misiurewicz families, and we will use the same name for the images of these families by the quotient map.
By a slight abuse of notation, we can think of Misiurewicz subfamilies of ${\rm Poly}_D^{cm}$ (resp.\ ${\rm poly}_D^{cm}$) also as Misiurewicz subfamilies of ${\rm Rat}_D^{cm}$ (resp.\ ${\rm rat}_D^{cm}$), where we allow (and require) the further critical relation
$f_\lambda^{-1} (\{\infty\})=\{\infty\}$.

\medskip

Let now $\Lambda$ be a Misiurewicz family and $\Lambda_0$ the algebraic family given by the associated critical relations.
Recall that a critical point $c$ is called {\it passive} on
an open subset
$\Lambda'$ of $\Lambda$ (or $\Lambda_0$)
if the sequence of holomorphic functions
$\{\lambda \mapsto f_\lambda^n(c(\lambda))\}_{n\ge 1}$ is a normal family on $\Lambda'$. Otherwise, the critical point is called {\it active} on $\Lambda'$.
It follows from the definition 
that
the critical points $c_i$
which are preperiodic to the periodic points
$r_i$
are passive on $\Lambda$ (this is true also on $\Lambda_0$, although it may be needed to pass to a finite cover to be able to follow the periodic points $r_i$ on all $\Lambda_0$).

\medskip

Recall 
\cite{Lyu83typical,MSS83} that an open subset $\Omega$ of $\Lambda$ or
$\Lambda_0$ 
is in the \emph{stability locus} of the family
if all critical points are passive on $\Omega$. We also say that the family is stable on $\Omega$.
We say that
$\Omega$ is a \emph{stable component} if it is a connected component of the stability locus.

\begin{defn}
We say that a stable component $\Omega\Subset \Lambda$ is {\it $\Lambda$-hyperbolic} if, for every $\lambda \in \Omega$, every critical point $c_j(\lambda)$ such that
$c_j$ is active on $\Lambda$ is contained in the basin of some attracting cycle for $f_\lambda$.
\end{defn}

We say that $\Omega$ is a \emph{bounded}
$\Lambda$-hyperbolic component if we have $\Omega \Subset \Lambda$ for the topology induced by $\Lambda$.
In this case, if
$\Lambda \subseteq {\rm Poly}_D^{cm}$,
all the critical points which are active on $\Lambda$ are contained in the basin of some attracting cycle 
in $\mathbb C$
(rather than just $\mathbb P^1(\mathbb C)$)
for every $\lambda\in \Omega$.
Observe that
in this case, as all critical points have bounded orbit,
by Lemma \ref{lem:lyap} (2)
we also have \begin{equation} \label{eq_Lyap_const}
L(\lambda) \equiv \log D \text{ on } \Omega.
\end{equation}

\section{Analyticity of pressure functions}
\label{sec_3}
The main goal of this section is to prove Theorem \ref{thm_ana_main}. As we will see, we in fact prove a stronger result (see Theorem \ref{t:new-analytic-all}) and obtain Theorem \ref{thm_ana_main} as a corollary (see Corollary \ref{c:delta-ana}). To prove Theorem \ref{t:new-analytic-all}, we first generalize the spectral study
carried out by Makarov-Smirnov \cite{MakSmir03}. 
Our generalization is twofold -- we extend the study in \cite{MakSmir03} (which was done for a single Misiurewicz map) to a stable holomorphic family of Misiurewicz maps, and we consider spectral properties for a more general 4-parameter family of transfer operators $\mathcal{L}_{(t_1,\lambda_1,t_2,\lambda_2)}$ which encode both the geometric potential at a given parameter and its perturbation in the direction of 
another parameter.
Then we combine it with a general method which allows us to deduce analyticity of the pressure function; see for instance \cite{SU10}. 

\medskip

Let $\Omega$ be a $\Lambda$-hyperbolic component of a Misiurewicz family $\Lambda$. We assume
for simplicity 
that the family is defined by a single critical relation $f_\lambda (c(\lambda))=r(\lambda)$, as the arguments are essentially the same in the general case.

Fix $\lambda_0 \in \Omega$ and consider a neighborhood $N(\lambda_0)\Subset \Omega$ of $\lambda_0$.
For each $\lambda \in N(\lambda_0)$, we denote by $f_\lambda$ a representative of the conjugacy class $\lambda$
and by
$J_\lambda$ the Julia set of $f_\lambda$.
Recall that, by \cite{MSS83, Lyu83typical}, there exists a \emph{holomorphic motion} of the sets
$J_\lambda$ over $N(\lambda_0)$, i.e.,
a family of conjugations
$h_\lambda \colon J_{ \lambda_0} \to J_\lambda$ which depend
holomorphically on $\lambda$. The family of the maps $(h_\lambda)_{\lambda \in \overline{N(\lambda_0)}}$
is compact in the space of 
$\gamma$-H\"older continuous
maps 
from $J_{ \lambda_0}$
to $\mathbb P^1$, for some $\gamma$ depending on $N(\lambda_0)$. We  can take 
 $\gamma\to 1$ 
 as
 $N(\lambda_0)$ shrinks
 to $\lambda_0$. In particular, we will always assume in what follows that
 $N(\lambda_0)$ is sufficiently small so that the associated $\gamma$ satisfies $\gamma>\gamma_0$ for some $\gamma_0<1$ whose precise value will be chosen when needed.
  Recall also that all these conjugations can be extended to 
 quasiconformal homeomorphisms
 from
$\mathbb P^1(\mathbb C)$ to itself,
and so
 in particular to H\"older continuous maps.

\medskip

For every $t \in \mathbb R$ and $\lambda \in \Omega$,
the {\it pressure function} is defined as
\begin{equation}\label{def_pressure}
p (t,\lambda)
\defeq
\limsup_{n\to \infty}
\frac{1}{n} \log \sum_{z\in f_\lambda^{-n} (c(\lambda))} 
|
(f_\lambda^n)' (z)|^{-t}.  
\end{equation}
We refer to \cite{PRLS04} for a number of equivalent definitions and characterizations of the pressure. 
For any fixed $\lambda$, the function
$t\mapsto p(t,\lambda)$ is strictly decreasing.
By \cite{McMullen00}, for every $\lambda\in \Omega$, the unique 
zero of the map
$t\mapsto p(t,\lambda)$
is equal to the Hausdorff dimension of the Julia set of $f_\lambda$. We denote this number by $\delta(\lambda)$.

\medskip

 In the following, it will be useful to observe that, thanks to the conjugating maps $h_\lambda$, we also have
\[
p (t,\lambda)
=
\limsup_{n\to \infty}
\frac{1}{n} \log \sum_{z\in f_{\lambda_0}^{-n} (c(\lambda_0))} 
|
(f_{\lambda}^n)'
\circ h_\lambda
(z)|^{-t}
\]
for every $t\in \mathbb R$ and $\lambda \in N(\lambda_0)$.
More generally, given
$t_1,t_2\in \mathbb R$ and
$\lambda_1, \lambda_2\in\Omega$, we define a \emph{joint pressure function}
\begin{equation}\label{eq:def-pressure-gen}
P(t_1,\lambda_1, t_2, \lambda_2)
\defeq
\limsup_{n\to \infty}
\frac{1}{n} \log \sum_{z\in f_{\lambda_0}^{-n} (c(\lambda_0))} 
|(f_{\lambda_1}^n)' ({ h_{\lambda_1}}(z))|^{-t_1}
|(f_{\lambda_2}^n)' ({ h_{\lambda_2}}(z))|^{-t_2}. 
\end{equation}

Observe that we have $P(t_1,\lambda_1,0, \lambda_2)= p(t_1, \lambda_1)$
for every $t_1, \lambda_1,\lambda_2$.

\medskip

The following is the main result of this section.
We denote by $B(\lambda,R)$ the ball of radius $R$ centered at $\lambda$.
\begin{thm}\label{t:new-analytic-all}
For every $\lambda\in \Omega$ there exists $R>0$ such that the function $P(t_1,\lambda_1, t_2, \lambda_2)$ is real analytic on 
$(\delta(\lambda)-R, \delta(\lambda)+R)
\times 
B(\lambda,R)
\times (-R,
R)\times
B(\lambda,R)$.
\end{thm}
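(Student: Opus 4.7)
The plan is to prove the theorem by recasting the joint pressure as the logarithm of the leading eigenvalue of a suitable transfer operator on an enlarged tower space, and then deducing real-analyticity via analytic perturbation theory after extending the parameters to a complex neighbourhood. The first step is to build, for each $\lambda$ in a neighbourhood $N(\lambda_0)\Subset \Omega$, a tower extension $T_\lambda\colon \mathcal{T}_\lambda \to \mathcal{T}_\lambda$ of $(f_\lambda,J_\lambda)$ in the spirit of Makarov--Smirnov, in which the preperiodic critical point is unfolded so that the induced dynamics becomes uniformly expanding. Using the holomorphic motion $h_\lambda$ conjugating $J_{\lambda_0}$ to $J_\lambda$ and its compatibility with the marked critical orbit (preserving the combinatorics encoded by the relation $f_\lambda(c(\lambda))=r(\lambda)$), one obtains a H\"older-continuous conjugation between $T_\lambda$ and $T_{\lambda_0}$; this is the content of Lemma \ref{l:Hlambda-logq} referenced in the introduction, and it allows us to view every transfer operator as acting on a single Banach space $C^\kappa(\mathcal{T})$ of H\"older functions on the model tower $\mathcal{T}\defeq \mathcal{T}_{\lambda_0}$.

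Next, I would pull back the geometric weights $-t_i\log|f_{\lambda_i}'\circ h_{\lambda_i}|$ to $\mathcal{T}$ and modify them so as to absorb the unfolding of the critical level, producing a H\"older potential of the split form $\zeta_{(t_1,\lambda_1,t_2,\lambda_2)} = \zeta_{(t_1,\lambda_1)} + \zeta_{(t_2,\lambda_2)}$, and consider the associated transfer operator $\mathcal{L}_{t_1,\lambda_1,t_2,\lambda_2}\colon C^\kappa(\mathcal{T})\to C^\kappa(\mathcal{T})$. The key analytic input is a Lasota--Yorke estimate for the iterates $\mathcal{L}^n$, locally uniform in $(t_1,\lambda_1,t_2,\lambda_2)$ in a compact neighbourhood of $(\delta(\lambda),\lambda,0,\lambda)$. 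Together with the topological mixing of $T_{\lambda_0}$ on $\mathcal{T}$, a standard Ionescu-Tulcea--Marinescu argument then yields quasi-compactness and a spectral gap: a simple dominant eigenvalue $\Lambda(t_1,\lambda_1,t_2,\lambda_2)$ separated from the rest of the spectrum. Identifying $\log\Lambda$ with the joint pressure $P$ of \eqref{eq:def-pressure-gen} is done by matching the Birkhoff sums of $\zeta$ under $T_{\lambda_0}$ with the sum over preimages $z\in f_{\lambda_0}^{-n}(c(\lambda_0))$ appearing in \eqref{eq:def-pressure-gen}, since those preimages correspond exactly to cylinders of the induced tower system.

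For analyticity, I would then extend the parameters $(t_1,\lambda_1,t_2,\lambda_2)$ to a small complex polydisc around $(\delta(\lambda),\lambda,0,\lambda)$ and show that the corresponding family $\mathcal{L}_{t_1,\lambda_1,t_2,\lambda_2}$ on $C^\kappa(\mathcal{T})$ is holomorphic in these complex parameters. This uses that $(f_{\lambda_i}^n)'(h_{\lambda_i}(z))$ varies holomorphically in $\lambda_i$, together with a careful choice of holomorphic branch for $|\,\cdot\,|^{-t_i}$ when $t_i$ is complex, as in \cite{SU10,UZ04real}. The spectral gap is an open condition in the operator norm, so it persists on such a polydisc; Kato's perturbation theory for isolated eigenvalues of finite multiplicity then guarantees that $\Lambda(t_1,\lambda_1,t_2,\lambda_2)$, and hence $P=\log \Lambda$, depends holomorphically on the complex parameters, and in particular is real-analytic on the real trace, which is exactly the statement of the theorem.

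The main obstacle will be the Lasota--Yorke inequality on the tower, uniform in all four parameters: one must simultaneously control the distortion of the iterates of $T_{\lambda_0}$ near the unfolded critical level and the H\"older perturbation coming from varying $\lambda_i$ through $h_{\lambda_i}$, producing bounds that depend only on the compact parameter set and not on the individual operator. A secondary but conceptual difficulty is ensuring that the tower construction and the modified potentials survive the passage to complex $t_i$ and $\lambda_i$, where the natural thermodynamic meaning is lost and only the holomorphic dependence of the operators on a fixed Banach space may be exploited; this is the point where the H\"older rather than quasiconformal conjugations between tower spaces, and the precise exponent $\gamma$ governing them, must be carefully tracked.
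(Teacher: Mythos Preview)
Your outline matches the paper's approach almost step for step: the Makarov--Smirnov tower, the H\"older conjugation $H_\lambda$ between towers (Lemma~\ref{l:Hlambda-logq}), the split potential $\zeta_{(t_1,\lambda_1)}+\zeta_{(t_2,\lambda_2)}$, the uniform Lasota--Yorke inequality (Lemma~\ref{l:L4MS}) feeding into quasi-compactness and a spectral gap (Proposition~\ref{p:MS-gen}), the identification $\eta=e^P$, and finally Kato--Rellich after complexification.

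One step in your sketch would fail as written and is worth tightening. You propose to get a holomorphic family of operators from the facts that $(f_{\lambda_i}^n)'(h_{\lambda_i}(z))$ is holomorphic in $\lambda_i$ and that one can ``choose a holomorphic branch for $|\cdot|^{-t_i}$ when $t_i$ is complex''. But the potential involves $\log|(f_{\lambda_i}^n)'(h_{\lambda_i}(z))|$, which is only \emph{pluriharmonic} in $\lambda_i$; taking the modulus destroys holomorphicity, and no branch choice in the $t_i$-variable repairs this. The paper (following \cite{SU10}, see Lemma~\ref{lem_6.3}) first writes this quantity as $\Re\log\hat\psi_y(\lambda)$ for a uniformly bounded holomorphic $\log\hat\psi_y$ (which requires a preliminary normalisation by a reference $\lambda_0$ with large multiplier so that $|\psi_y(\lambda)-1|<1/5$), and then uses the embedding $\iota_\ell\colon\mathbb{C}^\ell\hookrightarrow\mathbb{C}^{2\ell}$, $(x_j+iy_j)\mapsto(x_j,y_j)$, to extend the real part to a genuinely holomorphic function on a polydisc in $\mathbb{C}^{2\ell}$. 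Only after this doubling of the $\lambda$-variables does one obtain a holomorphic family $w\mapsto\mathcal{L}_w$ (verified via Proposition~\ref{p:SU-full} and Lemma~\ref{lem_314}) to which analytic perturbation theory applies. Since you already cite \cite{SU10,UZ04real}, you may well have this mechanism in mind, but your phrasing suggests a more direct route that does not exist.
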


The proof of Theorem \ref{t:new-analytic-all} consists of two steps. The first step is to prove spectral properties of a 4-parameter family of transfer operators on a tower dynamical system; see Proposition \ref{p:MS-gen}. Sections \ref{sec_3.1} and \ref{sec_3.2_tower} contain the setup and preliminary estimates for the tower dynamics, adapted to a holomorphic family of Misurewicz maps. The study of transfer operators is carried out in Section \ref{sec_3.3_to}. 
The second step is to extend the potential function
 to complex parameters,
so that the 4-parameter family of transfer operators
 can be seen as
a {\it holomorphic} family of transfer operators.
This step,
carried out in Sections \ref{sec_3.4} and \ref{ss:extension-analytic}, uses a general framework of Sumi-Urbanski \cite{SU10} and allows us to apply the Kato-Rellich perturbation theory.

\begin{cor}\label{c:delta-ana}
For every $\lambda\in \Omega$ there exists $R>0$ such that the function $p(t, \lambda)$ is real analytic on 
$(\delta(\lambda)-R, \delta(\lambda)+R)
\times 
B(\lambda,R)$. In particular, the function $\lambda\mapsto \delta(\lambda)$ is analytic on $\Omega$.
\end{cor}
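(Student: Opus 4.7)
The plan is to deduce both claims directly from Theorem~\ref{t:new-analytic-all}.

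For the first assertion, I would fix $\lambda\in\Omega$ and let $R$ be as in Theorem~\ref{t:new-analytic-all}. By the observation immediately following~\eqref{eq:def-pressure-gen}, we have $P(t_1,\lambda_1,0,\lambda_2)=p(t_1,\lambda_1)$ for every choice of $\lambda_2$. Specializing $t_2=0$ and $\lambda_2=\lambda$, the restriction of the real-analytic function $P$ to the affine slice $\{(t_1,\lambda_1,0,\lambda)\}$ is real-analytic in $(t_1,\lambda_1)$ and agrees with $p(t,\mu)$ on $(\delta(\lambda)-R,\delta(\lambda)+R)\times B(\lambda,R)$, giving the desired real-analyticity of $p$.

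For the second assertion, I would apply the real-analytic implicit function theorem to the equation $p(t,\lambda)=0$ near any fixed $(\delta(\lambda_0),\lambda_0)$. By McMullen's characterization, $\delta(\lambda_0)$ is the unique zero of $t\mapsto p(t,\lambda_0)$; by the first part this function is real-analytic, and it is strictly decreasing. Provided that $\partial_t p(\delta(\lambda_0),\lambda_0)\neq 0$, the implicit function theorem produces a unique real-analytic local solution $\lambda\mapsto t(\lambda)$, which by uniqueness of the zero coincides with $\delta(\lambda)$. Letting $\lambda_0$ vary, this yields analyticity on all of $\Omega$.

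The main obstacle is the nondegeneracy $\partial_t p(\delta(\lambda_0),\lambda_0)\neq 0$, since strict monotonicity alone is not enough (as $t\mapsto -t^3$ shows). I would address this in one of two standard ways. The first uses convexity of $t\mapsto p(t,\lambda_0)$ together with strict monotonicity: a vanishing derivative at $\delta(\lambda_0)$ would, by monotonicity of $\partial_t p$ (from convexity), force $\partial_t p\equiv 0$ on $[\delta(\lambda_0),\infty)$, contradicting strict monotonicity. The second, intrinsic to the spectral setup of Proposition~\ref{p:MS-gen}, differentiates the leading-eigenvalue realization of $p$ via Kato--Rellich perturbation theory, identifying $\partial_t p(t,\lambda_0)$ with $-\chi_t$, where $\chi_t$ is the Lyapunov exponent of the equilibrium state for $-t\log|f'_{\lambda_0}|$ pushed down from the tower to $J_{\lambda_0}$. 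Since this measure is hyperbolic with strictly positive Lyapunov exponent, the required inequality follows.
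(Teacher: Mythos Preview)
Your proposal is correct and follows exactly the same route as the paper: deduce the analyticity of $p$ from Theorem~\ref{t:new-analytic-all} by setting $t_2=0$, and then obtain the analyticity of $\delta$ from the implicit function theorem. The paper's own proof is two sentences and does not spell out the nondegeneracy condition $\partial_t p(\delta(\lambda_0),\lambda_0)\neq 0$; your discussion of this point (via convexity plus strict monotonicity, or via the Lyapunov-exponent identification of the $t$-derivative) is a welcome addition that the paper leaves implicit.
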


\begin{proof}
The first assertion is a consequence of Theorem \ref{t:new-analytic-all}, applied with $t_2=0$. The second assertion follows from the first one and the implicit function theorem.
\end{proof}

\begin{rmk}
The potential functions appearing in \eqref{eq:def-pressure-gen}
are in fact the same 
used to define and 
study the so-called {\it Manhattan curves}
\cite{Burger93,Sharp98}. 
In \cite{BH25}, among other things, we 
study the Manhattan curves associated to two hyperbolic rational maps, and more generally
holomorphic endomorphisms of $\mathbb C\mathbb P^k$. Theorem \ref{t:new-analytic-all} in particular also shows that the Manhattan curve associated to two maps in the same $\Lambda$-hyperbolic component 
 of a Misiurewicz family
is analytic.
\end{rmk}

\subsection{Branched covers and generalized conformal Cantor sets} \label{sec_3.1}
We first recall the general
setup of \cite{MakSmir03},
which describes a model dynamics $F \colon P_1 \to U_0$ for every map in $\Omega$.
As in \cite{MakSmir03}, for simplicity, we will only consider the case of \emph{generalized conformal Cantor sets},
which corresponds to the case where all but one critical points of $F$ escape to infinity. 
However, the results and the arguments 
hold without this assumption. 
To further simplify things, we will only consider the case of our interest, i.e., we will
assume 
that the non-escaping critical point $c$ is preperiodic to a repelling cycle (while $c$ is only assumed to be non-recurrent in \cite{MakSmir03}).
Without loss of generality, up to taking an iterate of the map, we assume that the image of $c$ is a fixed point $r$,
whose multiplier has modulus equal to $\chi>1$.

\medskip

Let $U_0$ be an open Jordan domain in $\mathbb C$ 
and $P_1$ a collection of a finite number of open topological disks whose closures are disjoint and contained in $U_0$. Let $F \colon P_1 \to U_0$ be a proper analytic function.
We assume that $F$ is a branched cover of order 2 on a component $U_1$ of $P_1$ and a biholomorphism on every other component. We assume that the number of components of $P_1$ is equal to $D-1\geq 1$, so that $F\colon P_1\to U_0$ is a branched cover of degree $D$.

\medskip

Observe that the (unique) critical point $c$ of $F$ belongs to $U_1$. For every $n \ge 2$, we set
$P_n \defeq f^{-n} (U_0)$. The 
\emph{Julia set}
$J(F)$ is then given by the intersection
$\cap_n P_n$, and by assumption
we have $c\in J(F)$.
For every $n \ge 0$ and $x\in J(F)$,
we denote by
$P_n (x)$ the component of $P_n$ containing $x$. For simplicity, 
we will also set $U_n\defeq P_n (c)$.
By assumption, we have $F(c)= r\notin U_1$, and this point is a fixed repelling point for $F$.
Hence, we can also denote
$V_n \defeq P_n(r)\neq U_n$. 
Observe that $V_{n+1}$ is mapped univalently by $F$
to $V_n$ and
that $F\colon U_{n+1}\to V_n$
is a branched cover of degree $2$. 
For every $n \ge 0$, 
we also
define the {\it critical annulus} $D_n \defeq U_n \setminus \overline{U_{n+1}}$. Hence, 
$\{D_n\}_{n\ge0}$ is a collection of annuli around the critical point $c$.
For every $n \ge 2$, 
$F^n$ is a 
cover of degree $2$ from $D_n$ to $U_0\setminus V_n$. In particular,
there exist
$2(D-2)$ components
of $P_n$ 
in $D_n$, and, for each of the $D-2$ components of $P_1$ different from $U_0$,
two of them are mapped univalently
to it.

\begin{example}\label{ex:deg3}
Assume that the degree of $F$ is 
equal to $3$. In this case, $P_1$ is given by $U_1\cup V_1$,
where
$U_1 = P_1 (c)$
and
$V_1=P_1(r)$. The set $P_2$ is given by the following five sets: 
\begin{itemize}
    \item  $U_2=P_2(c)$, which is mapped by $F$ to $V_1$ with degree $2$, 
    \item $V_2 \defeq P_2 (r)$,
    which is mapped univalently by $F$ to $V_1$;
    \item a topological disc $W_2\subset V_1\setminus V_2$, which is mapped univalently by $F$ to $U_1$, and
    \item two topological disks $Y_{2,1}, Y_{2,2}
    \subset U_1\setminus U_2$, each mapped univalently
    by $F$ to $U_1$.
\end{itemize}
More generally,
for every $n\geq 2$, there exist
two components $Y_{n,1}, Y_{n,2}\subset D_{n-1}$
and a component $W_n\subset V_{n-1}\setminus V_{n}$ of $P_n$
which are all
mapped 
univalently
by $F^n$ to $U_1$. 
\end{example}

Consider now the family $(f_\lambda)_{\lambda\in N(\lambda_0)}$.
The
map $F$ 
is 
a
model for the dynamics of every $f_\lambda$ 
(to see this, it is enough to 
take as $U_0$
any connected
open neighborhood of the Julia set $J(f_\lambda)$).
To 
simplify the notations,
we will assume that $\lambda_0=0$ and,
to avoid an extra conjugation in the steps below, 
we will directly
think of $F$ as the map $f_{0}$ and $U_0$ as a connected
neighborhood of $J(f_0)$. In particular, we have $f_\lambda
\circ h_\lambda
= h_\lambda \circ f_{0} = h_\lambda \circ F$ for every $z\in J(f_0)
= J(F)$ and $\lambda \in N(0)$.
Observe that 
we restrict to $N(0)$
in order to have uniform estimates for the maps $h_\lambda$ in the sequel. The critical point $c(\lambda)$
and the repelling point $r(\lambda)$ whose relation
$f_\lambda (c(\lambda))=r(\lambda)$
defines $\Omega$ satisfy $c(0)=c$ and $r(0)=r$.

\medskip

For every $\lambda \in N(0)$, 
we let
$\chi_\lambda$ be the multiplier of the repelling fixed point $r(\lambda)$, and we also set $\hat \chi \defeq \inf_{\lambda \in N(0)} |\chi_\lambda|>1$.
The following lemma is a direct consequence of \cite[Lemma 1]{MakSmir03} applied to every $f_\lambda$ with $\lambda \in N(0)$.

\begin{lem}\label{l:L1MS}
For every $k \ge 0$, $z\in D_k$, and $\lambda \in N(0)$,
we have
\[
{\rm diam}
(h_\lambda( U_k)) \sim \chi_\lambda^{-k/2}, \quad
| (f^k_\lambda)' (h_\lambda (z))| 
\sim \chi_\lambda^{k/2},
\quad
\mbox{ and  }
\quad
|
(f^k_\lambda)'' (h_\lambda (z))|
\lesssim \chi_\lambda^k,\]
where the implicit constants
are independent of $k$, $z$,
and
$\lambda$.
\end{lem}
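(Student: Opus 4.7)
The plan is to reduce the lemma to the dynamics of $f_\lambda$ alone, via the identification $h_\lambda(U_k)=U_k^\lambda$ where $U_k^\lambda$ is the component of $f_\lambda^{-k}(h_\lambda(U_0))$ containing $c(\lambda)$, and then to apply the Makarov--Smirnov argument of \cite[Lemma 1]{MakSmir03} uniformly in $\lambda\in\overline{N(0)}$ using compactness and the holomorphic dependence of all local invariants.

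First I would justify this identification. Since $c(\lambda)$ is pre-periodic to the repelling cycle through $r(\lambda)$ and is not recurrent, the iterated preimages of $h_\lambda(U_0)$ along the branch containing $c(\lambda)$ form a decreasing sequence of topological disks $U_k^\lambda$, with analogous $V_k^\lambda$ and $D_k^\lambda$. The natural dynamical extension of the motion, obtained by lifting $h_\lambda|_{U_0}$ along these preimages (unambiguous once $r(\lambda)$, the only relevant critical value on this branch, is taken into account), gives $h_\lambda(U_k)=U_k^\lambda$, $h_\lambda(V_k)=V_k^\lambda$ and $h_\lambda(D_k)=D_k^\lambda$, so all three quantities in the statement become intrinsic to $f_\lambda$.

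With this reduction, the three estimates come from two standard pieces of local data, both uniform in $\lambda$ on $\overline{N(0)}$. By Koenigs's theorem applied to the repelling fixed point $r(\lambda)$, whose multiplier $\chi_\lambda$ depends holomorphically on $\lambda$, there is a holomorphic family of linearizers $\phi_\lambda$ defined on a ball $B(r(\lambda),\rho)$ of uniform radius $\rho>0$ and satisfying $\phi_\lambda\circ f_\lambda=\chi_\lambda\cdot\phi_\lambda$, $\phi_\lambda(r(\lambda))=0$, $\phi_\lambda'(r(\lambda))=1$, with $|\phi_\lambda'|$ and $|(\phi_\lambda^{-1})'|$ bounded above and below uniformly in $\lambda$. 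On the other hand, near $c(\lambda)$ one has
\[
f_\lambda(z)=r(\lambda)+a_\lambda\,(z-c(\lambda))^2\bigl(1+O(z-c(\lambda))\bigr),
\]
with $a_\lambda=\tfrac12 f_\lambda''(c(\lambda))$ holomorphic and non-vanishing on $N(0)$, so $|a_\lambda|$ is bounded above and below uniformly. Koenigs then gives $\mathrm{diam}(V_{k-1}^\lambda)\sim|\chi_\lambda|^{-(k-1)}$ once $V_{k-1}^\lambda\subset B(r(\lambda),\rho)$ (the finitely many smaller $k$ are absorbed in the constants), and pulling back through the quadratic branch at $c(\lambda)$ yields $\mathrm{diam}(U_k^\lambda)\sim|\chi_\lambda|^{-k/2}$. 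Writing $w=h_\lambda(z)\in D_k^\lambda$, so that $|w-c(\lambda)|\sim|\chi_\lambda|^{-k/2}$, the identity
\[
(f_\lambda^k)'(w)=(f_\lambda^{k-1})'(f_\lambda(w))\cdot f_\lambda'(w)
\]
together with the Koenigs estimate $|(f_\lambda^{k-1})'(f_\lambda(w))|\sim|\chi_\lambda|^{k-1}$ and the quadratic-expansion estimate $|f_\lambda'(w)|\sim|\chi_\lambda|^{-k/2}$ yields $|(f_\lambda^k)'(w)|\sim|\chi_\lambda|^{k/2}$. The second-derivative bound follows from
\[
(f_\lambda^k)''(w)=(f_\lambda^{k-1})''(f_\lambda(w))\cdot(f_\lambda'(w))^2+(f_\lambda^{k-1})'(f_\lambda(w))\cdot f_\lambda''(w)
\]
and the Koenigs-based estimate $|(f_\lambda^{k-1})''|\lesssim|\chi_\lambda|^{k-1}$ near $r(\lambda)$ together with the uniform boundedness of $f_\lambda''$ near $c(\lambda)$.

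The main obstacle is the identification $h_\lambda(U_k)=U_k^\lambda$, which is crucial because the bare H\"older continuity of the motion would only yield the weaker $|\chi_\lambda|^{-k\gamma/2}$ with $\gamma<1$. This is exactly the point at which the pre-repelling structure $f_\lambda(c(\lambda))=r(\lambda)$ is used, to make the preimage tree through $c(\lambda)$ well-defined and uniformly controllable. Once this is in place, the rest is a routine chain-rule and Koenigs-linearization computation, $\lambda$-uniformly controlled by the compactness of $\overline{N(0)}$.
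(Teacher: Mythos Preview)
Your approach is exactly the one the paper has in mind: it states only that the lemma ``is a direct consequence of \cite[Lemma 1]{MakSmir03} applied to every $f_\lambda$ with $\lambda\in N(0)$,'' and you supply the missing justification. In particular, you correctly isolate the one non-trivial point the paper glosses over, namely that the quasiconformal extension of $h_\lambda$ need not conjugate off $J_0$, so one must \emph{choose} the extension dynamically (by lifting along the preimage tree through $c(\lambda)$) to obtain $h_\lambda(U_k)=U_k^\lambda$ and $h_\lambda(D_k)=D_k^\lambda$; without this, bare H\"older continuity would only give the useless exponent $-k\gamma/2$.

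One small slip: your Koenigs estimate $|(f_\lambda^{k-1})''|\lesssim|\chi_\lambda|^{k-1}$ on $V_{k-1}^\lambda$ is too strong. Writing $g=f_\lambda^{k-1}=\phi_\lambda^{-1}\circ(\chi_\lambda^{k-1}\cdot)\circ\phi_\lambda$ and differentiating twice gives
\[
g''(z)=(\phi_\lambda^{-1})''\bigl(\chi_\lambda^{k-1}\phi_\lambda(z)\bigr)\,\chi_\lambda^{2(k-1)}\,\phi_\lambda'(z)^2+(\phi_\lambda^{-1})'\bigl(\chi_\lambda^{k-1}\phi_\lambda(z)\bigr)\,\chi_\lambda^{k-1}\,\phi_\lambda''(z),
\]
so the correct bound is $|(f_\lambda^{k-1})''|\lesssim|\chi_\lambda|^{2(k-1)}$. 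This is harmless for the conclusion: plugging it into your chain-rule identity, the first term is $\lesssim|\chi_\lambda|^{2(k-1)}\cdot|\chi_\lambda|^{-k}=|\chi_\lambda|^{k-2}\lesssim|\chi_\lambda|^k$ and the second is $\lesssim|\chi_\lambda|^{k-1}\lesssim|\chi_\lambda|^k$, as required.
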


\subsection{Tower dynamics}\label{sec_3.2_tower}
We now recall the construction of the \emph{tower extension}
for the model dynamics $F \colon P_1 \to U_0$.
For dynamical systems exhibiting some form of hyperbolicity, the tower method has been extensively used to make the dynamics uniformly expanding on some auxiliary tower space, see for instance \cite{Hofbauer80, Young98}.

We first consider the product $U_0\times \mathbb N$ and its subset
\[
\mathcal T \defeq \sqcup_{k \geq 1} \mathcal T_k,
\]
where $\mathcal T_1 \defeq (P_1, 1)$
and 
$\mathcal T_k \defeq
(U_k, k)$
for every $k\geq 2$ are the \emph{floors} of the tower. Then, consider the map $T=T_0 \colon \mathcal T\to \mathcal T \cup (U_0,1)$
given by
\begin{equation}\label{eq:def-map-T}
T\colon (z,k)
\mapsto
\begin{cases}
(z,k+1) & z\in U_{k+1}\\
(F^k (z), 1) & z\notin U_{k+1}.
\end{cases}
\end{equation}

\begin{example}\label{ex:deg3-preim}
Consider again the case where $d=3$ as in Example \ref{ex:deg3}. We see that every point not on the first floor of $\mathcal T$ 
has exactly one preimage under $T$ 
(given by the corresponding point on the floor just below it). On the other hand, on the first floor:
\begin{itemize}
\item every point in $V_1$ has exactly one preimage in $V_2$;
\item every point in $U_1$ has infinitely many preimages: one in $W_2$, and then, for every $n\geq 2$, one in each $Y_{n,1}$ and $Y_{n,2}$.
\end{itemize}

\begin{rmk}\label{rmk_bdryUk}
Observe that all the points of the form $(z,k)$ with $z$ on the 
boundary
of $U_k$ are mapped
(to the first floor and) to the boundary of $V_1$. In particular, their images are not in $\mathcal T$.
\end{rmk}

\end{example}

Fix now a 
constant $1<\chi_* <\sqrt{\hat \chi}$.
Consider the following metric on $\mathcal T$: by definition, points in different floors have infinite distance, and the distance inside $\mathcal T_k$ is the usual Euclidean distance multiplied by the factor $(\chi_*)^{k-1}$. We will denote by $d_{\mathcal T}$ 
the induced distance function on $\mathcal T$.
Observe, in particular, that the map $F$ is expanding with respect to this distance. More precisely,
for every
$(z,k)\in \mathcal T$,
by Lemma \ref{l:L1MS}
we have
\[
T' (z,k)= \begin{cases}
\chi_* & z \in U_{k+1}\\
\chi_*^{1-k} (F^k)'(z) \gtrsim 
(\sqrt{\hat \chi } / \chi_*)^{k}
& z\notin \overline{U_{k+1}}.
\end{cases}
\]
Observe that $T'$ is
differentiable at $(z,k)$, unless $z$ belongs to the boundary of $U_{k}$.
Observe also that the diameter of $\mathcal T_k$ goes to zero exponentially by the choice of $\chi_*<\sqrt{\hat \chi}$ and Lemma \ref{l:L1MS}.

\medskip

For every $\lambda\in N(0)$,
we can similarly construct a tower dynamics $T_\lambda \colon \mathcal T_\lambda \to \mathcal T_\lambda 
\cup (h_{ \lambda} (U_0), 1)$, where
$\mathcal T_\lambda$ is the tower defined 
replacing
$P_1$ and $U_k$ with
$h_\lambda (P_1)$ and $h_\lambda (U_k)$, respectively, and $T_\lambda$ is obtained replacing
$F^k$ with
$f_\lambda^k$
in the definition 
\eqref{eq:def-map-T}
of $T$.
By the choice of $\chi_*$, for every
$\lambda$ the map
$T_\lambda$ 
is expanding on $\mathcal T_\lambda$. The following lemma is a consequence of \cite[Lemma 2]{MakSmir03} applied at every $\lambda$. It essentially follows from Lemma \ref{l:L1MS}, as
\cite[Lemma 2]{MakSmir03} follows from \cite[Lemma 1]{MakSmir03}.

\begin{lem}\label{l:L2MS}
There is a constant $C$ independent of $n$ and $\lambda$ such that if $T_\lambda^n$
is defined and differentiable at $y$, then
\[
\frac{|(T^n_\lambda)'' (y)|}{|(T^n_\lambda)' (y)|^2} \leq C.
\]
\end{lem}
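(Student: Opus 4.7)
The plan is to follow the standard bounded-distortion argument for uniformly expanding systems, reducing the statement for iterates to a pointwise bound at the first step and a geometric-sum estimate coming from uniform expansion. The key identity is obtained by differentiating $\log (T_\lambda^n)' = \sum_{k=0}^{n-1} \log T_\lambda' \circ T_\lambda^k$, which gives
\[
\frac{(T_\lambda^n)''(y)}{\bigl((T_\lambda^n)'(y)\bigr)^2}
= \sum_{k=0}^{n-1} \frac{T_\lambda''(T_\lambda^k y)}{\bigl(T_\lambda'(T_\lambda^k y)\bigr)^2}
\cdot \frac{T_\lambda'(T_\lambda^k y)}{(T_\lambda^{n-k})'(T_\lambda^k y)}
= \sum_{k=0}^{n-1}\frac{T_\lambda''(T_\lambda^k y)}{\bigl(T_\lambda'(T_\lambda^k y)\bigr)^2}\cdot \frac{1}{(T_\lambda^{n-k-1})'(T_\lambda^{k+1} y)}.
\]
So it suffices to establish (a) a uniform lower bound $|T_\lambda'|\geq \rho_0>1$ and (b) a uniform upper bound $|T_\lambda''|/|T_\lambda'|^2\leq C'$, both independent of $\lambda\in N(0)$ and of the point. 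The conclusion then follows by dominating the right-hand side by $C'\sum_{j\geq 0}\rho_0^{-j}$.

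For (a), on the part of floor $k$ where $z\in U_{k+1}$ the map is the identity in Euclidean coordinates, so the metric rescaling gives $|T_\lambda'|=\chi_*>1$. On the complementary part, one has $|T_\lambda'(z,k)|=\chi_*^{1-k}|(f_\lambda^k)'(h_\lambda(z))|$, and Lemma \ref{l:L1MS} gives this is $\gtrsim (\sqrt{\hat\chi}/\chi_*)^{k-1}\sqrt{\hat\chi}$, which is uniformly bounded below by some $\rho_0>1$ since $\chi_*<\sqrt{\hat\chi}$. For (b), the estimate is trivial on the floor-to-floor part where $T_\lambda'$ is constant. On the remaining part, the chain rule combined with the metric rescaling on floor $k$ by $\chi_*^{k-1}$ introduces factors $\chi_*^{-(k-1)}$ in $T_\lambda'$ and $\chi_*^{-2(k-1)}$ in $T_\lambda''$, so these factors cancel in the ratio and
\[
\frac{|T_\lambda''(z,k)|}{|T_\lambda'(z,k)|^2}=\frac{|(f_\lambda^k)''(h_\lambda(z))|}{|(f_\lambda^k)'(h_\lambda(z))|^2}\lesssim \frac{\chi_\lambda^k}{\chi_\lambda^k}\sim 1,
\]
by the three estimates of Lemma \ref{l:L1MS}, with implicit constants uniform in $\lambda\in N(0)$.

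The main point that requires care, rather than being a genuine obstacle, is the bookkeeping of the metric rescaling on the floors: one has to verify that the exponents $\chi_*^{k-1}$ conspire to make the ratio $|T_\lambda''|/|T_\lambda'|^2$ coincide with its Euclidean counterpart for $f_\lambda^k$, which is exactly what Lemma \ref{l:L1MS} was designed to bound. Uniformity in $\lambda$ is automatic, since all constants in Lemma \ref{l:L1MS} and the definition $\hat\chi=\inf_{\lambda\in N(0)}|\chi_\lambda|>1$ are already uniform on $N(0)$. The hypothesis that $T_\lambda^n$ is differentiable at $y$ ensures that none of the orbit points $T_\lambda^k y$, $0\leq k<n$, lies on the boundaries $\partial U_j$ where $T_\lambda$ fails to be differentiable (cf.\ Remark \ref{rmk_bdryUk}), so the identity above is justified.
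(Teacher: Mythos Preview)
Your argument is correct and is precisely the bounded-distortion computation behind \cite[Lemma~2]{MakSmir03}, which the paper merely cites without proof, noting that it follows from Lemma~\ref{l:L1MS}. You have spelled out exactly that reduction: the uniform expansion of $T_\lambda$ (asserted by the paper just before the lemma) together with the single-step nonlinearity bound $|T_\lambda''|/|T_\lambda'|^2\lesssim 1$ coming from Lemma~\ref{l:L1MS} feeds the standard geometric-sum identity for $(T_\lambda^n)''/((T_\lambda^n)')^2$.
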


The conjugations $h_\lambda$ between 
$F$ and the $f_\lambda$'s can be lifted to the towers. More precisely, 
for every $\lambda$ we can
consider the map $H_\lambda$ given by
\[
H_\lambda ( (z,k))
\defeq (h_\lambda (z),k)
\quad \mbox{ for every }
(z,k)\in \mathcal T.
\]
As the maps $h_\lambda$ are $\gamma$-H\"older continuous, the same is true for the restriction of $H_\lambda$ to each level of the tower $\mathcal T$.
On the other hand, because of the dilation of the metric at every floor,
the modulus of continuity
of
$H_\lambda$
a priori gets worse and worse
as $k\to \infty$.
The following
lemma
shows that, nevertheless, the maps $H_\lambda$ 
are still
H\"older continuous 
on the full tower. 

\begin{lem}\label{l:Hlambda-logq}
Up to taking $N(0)$ and $\kappa$ sufficiently small, 
the family $\{H_\lambda\}_{\lambda \in\overline{ N(0)}}$
is compact
in the space 
of $\kappa$-H\"older continuous maps.
\end{lem}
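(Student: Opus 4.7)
The plan is to exploit the competition between the tower metric's dilation factor $\chi_*^{k-1}$ on floor $k$ and the Euclidean shrinkage $\operatorname{diam}(U_k)\lesssim \chi^{-k/2}$ given by Lemma \ref{l:L1MS}. Since $\chi_*<\sqrt{\hat\chi}\leq\sqrt{\chi}$, the shrinkage strictly dominates the dilation, and this margin can be converted into H\"older regularity on the whole tower, provided one slightly sacrifices the original H\"older exponent $\gamma$ of the motion.

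First I would observe that, by the very definition of $d_\mathcal T$, points on distinct floors are at infinite distance, so the H\"older condition is automatic for such pairs and the estimate only needs to be checked floor by floor. On the bottom floor $\mathcal T_1$ the metric is Euclidean (up to the factor $\chi_*^0=1$), so the uniform $\gamma$-H\"older continuity of the family $\{h_\lambda\}_{\lambda\in\overline{N(0)}}$ handles this case directly.

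For $k\geq 2$ and $(z_1,k),(z_2,k)\in\mathcal T_k$, the definitions give
\[
d_\mathcal T\bigl(H_\lambda(z_1,k),H_\lambda(z_2,k)\bigr)\leq C\chi_*^{k-1}|z_1-z_2|^{\gamma}, \qquad d_\mathcal T\bigl((z_1,k),(z_2,k)\bigr)=\chi_*^{k-1}|z_1-z_2|,
\]
with $C$ uniform in $\lambda\in\overline{N(0)}$. Writing $|z_1-z_2|^\gamma=|z_1-z_2|^{\gamma-\kappa_0}\cdot|z_1-z_2|^{\kappa_0}$ and invoking the diameter bound $|z_1-z_2|\leq\operatorname{diam}(U_k)\lesssim\chi^{-k/2}$, the $\kappa_0$-H\"older ratio is at most
\[
C'\,\chi_*^{(k-1)(1-\kappa_0)}\,\chi^{-k(\gamma-\kappa_0)/2}.
\]
Setting $\rho\defeq 2\log\chi_*/\log\chi\in(0,1)$, this quantity is bounded uniformly in $k$ precisely when $\kappa_0\leq(\gamma-\rho)/(1-\rho)$. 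Shrinking $N(0)$ so that $\gamma>\rho$ (possible since $\gamma$ may be taken arbitrarily close to $1$) makes this range non-empty, and any $\kappa_0$ in it furnishes a uniform $\kappa_0$-H\"older bound for the $H_\lambda$ on $\mathcal T$.

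Finally, fixing any $\kappa<\kappa_0$, I would upgrade this uniform bound to compactness in $C^\kappa(\mathcal T)$ via a standard diagonal Arzel\`a--Ascoli argument: on each (compact) floor the inclusion $C^{\kappa_0}\hookrightarrow C^\kappa$ is compact, so a subsequence can be extracted converging floor by floor; since $\operatorname{diam}(\mathcal T_k)\lesssim(\chi_*/\sqrt{\hat\chi})^{k}\to 0$, contributions from high floors to the global $C^\kappa$-norm are automatically negligible, yielding convergence of a subsequence in $C^\kappa(\mathcal T)$. The genuine difficulty is the balancing of $\chi_*$, $\gamma$, and $\kappa$ so that the estimate is uniform across \emph{all} floors; once this is handled, both the H\"older bound and the compactness step are routine.
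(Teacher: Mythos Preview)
Your argument is correct and is essentially the paper's own proof: both exploit the diameter bound $\operatorname{diam}(U_k)\lesssim\chi^{-k/2}$ against the floor dilation $\chi_*^{k-1}$ to show that the uniform $\gamma$-H\"older continuity of $h_\lambda$ yields a uniform $\kappa$-H\"older bound on the whole tower, and your condition $\kappa_0\leq(\gamma-\rho)/(1-\rho)$ is exactly the rearrangement of the paper's inequality $\chi_*<\hat\chi^{(\gamma_0-\kappa)/2(1-\kappa)}$. The only difference is that the paper stops once the uniform H\"older bound is obtained (which is all that is ever used later), whereas you add the $C^{\kappa_0}\hookrightarrow C^\kappa$ compactness step; note however that the floors $U_k$ are open, so your Arzel\`a--Ascoli argument needs passage to closures, a harmless point since each $h_\lambda$ extends continuously to $\overline{U_k}$.
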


\begin{proof}
Recall that we are assuming that $N(0)$ is so small that 
all the $h_\lambda$'s are (uniformly) $\gamma$-H\"older continuous, for some $\gamma >\gamma_0$, where $\gamma_0$ is close to $1$.

Fix $k\in \mathbb N$ and take two points $x,y\in \mathcal T_k$ with $d_{\mathcal T} (x,y)<r$
(recall that points on different floors have infinite distance). 
As mentioned above, 
by the first estimate in Lemma \ref{l:L1MS}
and the choice
$\chi_*<\sqrt{\hat\chi}$,
 we necessarily have $r\lesssim \chi_*^{k-1} 
 \hat \chi^{-k/2}\to 0$ as $k\to \infty$. 
By definition, the
points $x$ and $y$
correspond
to points $a,b\in\mathbb C$ whose Euclidean distance is less than $\chi_*^{1-k}r$.
It 
follows from the H\"older-continuity of $h_\lambda$ 
 that we have
\[
|h_\lambda (a)- h_\lambda (b)|\leq C_\gamma
(\chi_*^{1-k}r)^\gamma,
\]
where the constant $C_\gamma$ is independent of $\lambda\in N(0)$.
This implies that we have
\[
d_{\mathcal T_\lambda} (H_\lambda (x), H_\lambda (y))
\lesssim \chi_*^{k}
(\chi_*^{-k} r)^{\gamma} 
=
\chi_*^{k (1-\gamma)}
 r^{\gamma},
\]
where the
implicit constant is independent of $\lambda$ and $k$.

Taking $\gamma_0$ sufficiently close to $1$, let $\kappa >0$ be sufficiently close to $0$ such that $\chi_*< \hat \chi^{\frac{\gamma_0-\kappa}{2(1-\kappa)}}$. This is possible by the condition $\chi_* < \sqrt{\hat \chi}$.
It follows from the above bound for $r$ 
that
we have
\[
\chi_*^{k (1-\gamma)}
 r^{\gamma-\kappa} \lesssim
 \chi_*^{k (1-\gamma)}
 (\chi_*^{k} \hat{\chi}^{-k/2})^{\gamma-\kappa}
=
\chi_*^{k(1-\gamma) +k(\gamma-\kappa)} 
 \hat{\chi}^{-k(\gamma-\kappa)/2} 
 =
 \chi_*^{k(1-\kappa)}
 \hat{\chi}^{-k(\gamma-\kappa)/2}\lesssim 1,\]
 where the implicit constant is now independent of $k$.
Hence, 
we have
\[
d_{\mathcal T_\lambda} (H_\lambda (x), H_\lambda (y))
\lesssim  r^{\kappa},
\]
where again the implicit constant is independent of $k$.
This completes the proof.
\end{proof}

\subsection{Transfer operators} \label{sec_3.3_to}
We denote by $C(\mathcal T)$ the space of bounded continuous functions $g\colon \mathcal T\to \mathbb R$ endowed 
with the $L^\infty$-norm.
For every $t_1,t_2>0$ and $\lambda_1, \lambda_2 \in N(0)$,
consider the transfer operator $\mathcal L_{t_1,\lambda_1,t_2, \lambda_2} \colon C(\mathcal T) \to C(\mathcal T)$ defined as
\begin{equation*}
\mathcal L_{t_1,\lambda_1, t_2,\lambda_2} g(x)
\defeq
\sum_{T(y)=x}
g(y)
|  R_{\lambda_1} (y) |^{-t_1}
|  R_{\lambda_2} (y) |^{-t_2}
\end{equation*}
where, for 
every
$y \in \mathcal{T}_k$ and $\lambda\in N(0)$,
the value $R_\lambda(y)$ is defined as
\[
R_\lambda (y) \defeq \begin{cases}
\chi_* & y\in U_{k+1}\times \{k\}\\
\chi_*^{1-k}  \cdot 
|(f_\lambda^k)'\circ h_\lambda| & y\notin \overline{U_{k+1}}\times \{k\}.
\end{cases}
\]
Observe that, thanks to the conjugating 
maps
$H_\lambda$, 
$\mathcal L_{t_1,\lambda_1, t_2, \lambda_2}$
can be equivalently
defined
as
\[
\mathcal L_{t_1,\lambda_1, t_2, \lambda_2} g(x)
\defeq
\sum_{T (y)= x}
g(y)
|  T_{\lambda_1}' (H_{\lambda_1} (y)) |^{-t_1}
|  T_{\lambda_2}' (H_{\lambda_2} (y)) |^{-t_2}.
\]
The second writing has the advantage of allowing us to apply, for
every $\lambda_1, \lambda_2 \in N(0)$, the uniform estimates of \cite{MakSmir03} (see Lemmas \ref{l:L1MS} and \ref{l:L2MS}). 

\medskip

Observe that the operator $\mathcal L_{t_1,\lambda_1, t_2, \lambda_2}$
above is well defined
(i.e., for every $g\in C(\mathcal T)$,
the value of 
$\mathcal L_{t_1,\lambda_1, t_2, \lambda_2} g(x)$ is well defined for every $x\in \mathcal T$
and $\mathcal L_{t_1,\lambda_1, t_2, \lambda_2} g$
is continuous).
Indeed, the only points where $\mathcal L_{t_1,\lambda_1, t_2, \lambda_2} g$ 
may be not continuous (or not defined)
in  $\mathcal T \cup (U_0,1)$ are on 
$(\partial V_1,1)$, which is outside $\mathcal T$
(see Example \ref{ex:deg3-preim} for the case of degree $3$ and Remark \ref{rmk_bdryUk}).

\medskip

In \cite{MakSmir03}, the authors study the
special
case where
$t_2=0$ and 
$\lambda_1=\lambda_0$
and obtain a spectral gap for this operator on the Banach space of 
Lipschitz
functions on $\mathcal T$.
As we now allow the parameters 
$\lambda_1,\lambda_2$
to change
in $N(0)$, we will need to take into account the distortion given by the holomorphic motion $h_\lambda$ and the map $H_\lambda$
(observe that this is necessary even when $t_2=0$).
To this aim,
we will instead
work with the Banach space
$C^{\kappa}(\mathcal T)$
of $\kappa$-H\"older continuous functions endowed with the H\"older norm $\|\cdot\|_\kappa$. 
For $g \in C^{\kappa}(\mathcal T)$, we have
\[\|g\|_{\kappa} \defeq \|g\|_{\infty} +\|g\|'_{\kappa}\]
where $\|g\|'_{\kappa}\defeq \sup_{a,b\in \mathcal T} \frac{|g(a)-g(b)|}{d_{\mathcal T} (a,b)^\kappa}$
is the \emph{H\"older constant} of $g$.

\begin{lem}\label{l:cont-L}
For every $t_1, t_2\in\mathbb R$ with $t_1+t_2 >0$
and $\lambda_1, \lambda_2 \in N(0)$, the transfer operator $\mathcal L_{t_1,\lambda_1, t_2, \lambda_2}$
is a bounded linear operator on both $C(\mathcal T)$ and $C^{\kappa} (\mathcal T)$.
\end{lem}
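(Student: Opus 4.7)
The plan is to derive both claims from a uniform control of the sum of weights
\[
\mathcal N(x)\defeq\sum_{T(y)=x}|R_{\lambda_1}(y)|^{-t_1}|R_{\lambda_2}(y)|^{-t_2},
\]
complemented by a Lasota--Yorke style argument for the H\"older variation. The first step is to analyze the preimages of $x$ under $T$ level by level. If $x\in\mathcal T_m$ with $m\ge 2$, the definition of $T$ forces a unique preimage $(w,m-1)$ in the ``go up'' region, so the corresponding contribution to $\mathcal N(x)$ is exactly $\chi_*^{-(t_1+t_2)}$. The essential case is $x=(w,1)\in\mathcal T_1$: at floor $1$ there are at most $D$ preimages in $P_1\setminus U_2$, on which $R_{\lambda_i}$ is uniformly bounded above and below (they avoid the critical point); at each floor $k\ge 2$ there are at most two preimages in $D_k$, and by Lemma~\ref{l:L1MS} the weight on each is $|R_{\lambda_i}(y)|\asymp\chi_*^{1-k}|\chi_{\lambda_i}|^{k/2}$. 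Summing over $k$ produces a geometric series with ratio
\[
q\defeq\chi_*^{t_1+t_2}|\chi_{\lambda_1}|^{-t_1/2}|\chi_{\lambda_2}|^{-t_2/2}.
\]
Using $t_1+t_2>0$ together with $\chi_*<\sqrt{\hat\chi}\le\sqrt{|\chi_{\lambda_i}|}$, and shrinking $N(0)$ if needed so that $|\chi_{\lambda_1}|,|\chi_{\lambda_2}|$ remain close, one has $q<1$. Hence $\sup_x\mathcal N(x)<\infty$, which gives $\|\mathcal L_{t_1,\lambda_1,t_2,\lambda_2}g\|_\infty\le\sup_x\mathcal N(x)\,\|g\|_\infty$.

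For the $C^\kappa$-bound, only the H\"older seminorm remains to be controlled. Take $x,y$ on the same floor of $\mathcal T$ with $d_{\mathcal T}(x,y)=r$ small (points on different floors are at infinite distance, so no estimate is needed there) and pair up preimages along the inverse branches $\phi_i$ of $T$. Telescoping,
\[
\mathcal L g(x)-\mathcal L g(y)=\sum_i W_i(x)\bigl(g(\phi_i(x))-g(\phi_i(y))\bigr)+\sum_i\bigl(W_i(x)-W_i(y)\bigr)g(\phi_i(y)),
\]
where $W_i(\cdot)=|R_{\lambda_1}(\phi_i(\cdot))|^{-t_1}|R_{\lambda_2}(\phi_i(\cdot))|^{-t_2}$. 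The first sum is bounded by $\|g\|'_\kappa r^\kappa\sum_i W_i(x)|\phi_i|_{\text{Lip}}^\kappa$. Since each $|\phi_i|_{\text{Lip}}=1/|T'(\phi_i(\cdot))|$ is uniformly bounded (and much smaller on high-floor go-down inverses), the extra factor only improves the convergent series already controlled by $\sup_x\mathcal N(x)$.

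The second sum is where bounded distortion enters. Using $R_{\lambda_i}=T'_{\lambda_i}\circ H_{\lambda_i}$ together with the intertwining $H_{\lambda_i}\circ\phi_i=\psi_{i,\lambda_i}\circ H_{\lambda_i}$ for the inverse branch $\psi_{i,\lambda_i}$ of $T_{\lambda_i}$ corresponding to $\phi_i$, Lemma~\ref{l:L2MS} applied to $T_{\lambda_i}$ yields
\[
\bigl|\log T'_{\lambda_i}(\psi_{i,\lambda_i}(u))-\log T'_{\lambda_i}(\psi_{i,\lambda_i}(v))\bigr|\le C\,d_{\mathcal T_{\lambda_i}}(u,v)
\]
uniformly in $\psi_{i,\lambda_i}$; taking $u=H_{\lambda_i}(x)$, $v=H_{\lambda_i}(y)$ and invoking Lemma~\ref{l:Hlambda-logq} bounds $d_{\mathcal T_{\lambda_i}}(u,v)\lesssim r^\kappa$. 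Hence $|\log W_i(x)-\log W_i(y)|\lesssim(|t_1|+|t_2|)r^\kappa$ uniformly in $i$, so $|W_i(x)-W_i(y)|\lesssim W_i(x)r^\kappa$, and the second sum is at most $\|g\|_\infty r^\kappa\sup_x\mathcal N(x)$ up to constants. Combining everything gives $|\mathcal L g(x)-\mathcal L g(y)|\lesssim r^\kappa\,\|g\|_\kappa$, which closes the $C^\kappa$ estimate.

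The main obstacle is exactly this distortion estimate for the weights $W_i$: the bounded-distortion statement of Lemma~\ref{l:L2MS} lives on the tower $\mathcal T_{\lambda_i}$ associated to $f_{\lambda_i}$, not on the reference tower $\mathcal T$, and the transfer between them happens only through the H\"older --- not Lipschitz --- conjugation $H_{\lambda_i}$. This is precisely why we work in $C^\kappa(\mathcal T)$ rather than the Lipschitz space of \cite{MakSmir03}. Two secondary but routine points: the pairing of preimages of $x$ and $y$ can fail near the boundaries between go-up and go-down regions, which is handled by taking $r$ smaller than those separation distances; and the uniform convergence of the geometric series with ratio $q$ implicitly constrains how large $|t_1|,|t_2|$ can be for a fixed $N(0)$, a point naturally absorbed by the later spectral estimates, which work on bounded parameter boxes.
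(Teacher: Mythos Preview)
Your proof is correct and follows essentially the same route as the paper. For the $C(\mathcal T)$ bound the paper does exactly your floor-by-floor preimage count and bounds the resulting geometric series via Lemma~\ref{l:L1MS} together with $\chi_*<\sqrt{\hat\chi}$ and $t_1+t_2>0$; your caveat about possibly shrinking $N(0)$ when $t_1,t_2$ have opposite signs is a fair (and honest) reading of that step. For the $C^\kappa$ bound the paper explicitly defers to Lemma~\ref{l:L4MS}, and your telescoping argument---pairing preimages, using Lemma~\ref{l:L2MS} for bounded distortion of $T'_{\lambda_i}$ along inverse branches, and Lemma~\ref{l:Hlambda-logq} to convert to an $r^\kappa$ loss---is precisely the $n=1$ case of that lemma's proof (the decomposition $I+II+III+IV$ there is your two-term telescoping refined to separate the $\lambda_1$ and $\lambda_2$ contributions).
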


\begin{proof}
For simplicity, we will give a proof only in the case where $d=3$
(see Examples \ref{ex:deg3} and \ref{ex:deg3-preim}),
so that the combinatorics of the problem become simpler. 
In particular, an explicit description of the preimages of points in $\mathcal T$ is given in Example \ref{ex:deg3-preim}.
The general case can be proved in exactly the same way. To simplify the notations,
we will also denote 
$\mathcal L_{t_1, \lambda_1, t_2, \lambda_2}$ by $\mathcal L$.

\medskip

As we saw above, 
$\mathcal L$
preserves $C (\mathcal T)$. 
We now show that
$\mathcal L$
is a bounded operator on $C(\mathcal{T})$. 
If $x$ is on the $(k+1)$-th floor, then 
$x$ has only one preimage $y$ under $T$, 
which is on the $k$-th floor of $\mathcal T$, and 
$y$ belongs to $U_{k+1} \times \{k\}$.
Then we have
$T'_{\lambda_1}(y) =
T'_{\lambda_2}(y) =\chi_*$ and $$|\mathcal{L}
g(x)| = |g(y)|\cdot
|R_{\lambda_1}(y)|^{-t_1} 
|R_{\lambda_2}(y)|^{-t_2} 
\le \|g\|_\infty \chi_*^{-(t_1+t_2)}.$$

Let us now take $x$ in the first floor. If $x\in V_1\times \{1\}$, then it has one preimages (in $V_2 \times \{1\}$)
and
the argument is similar to the one above.
If, instead, $x$ 
belongs to $U_1 \times \{1\}$,
then
$x$ 
has countable preimages, which 
are as described in Example \ref{ex:deg3-preim}. We denote by $y$
 the preimage in $W_2$ and,
 for every $k\geq 2$,
 by $(y_{k,1},k)$
 and $(y_{k,2},k)$
 the preimages in 
 $Y_{k,1}\times \{k\}$
 and 
 $Y_{k,2}\times \{k\}$, respectively.
We then have
\[
\begin{aligned}
|\mathcal{L}
g(x)| 
 \le & 
 \|g\|_\infty 
\Big(
|f_{\lambda_1}'(h_{\lambda_1}(y))|^{-t_1}
|f_{\lambda_2}'(h_{\lambda_2}(y))|^{-t_2}\\
& +\sum_{k\geq 2; j=1,2} 
|
\chi_*^{1-k}
(f_{\lambda_1}^k)'(h_{\lambda_1}(y_{k,j}))|^{-t_1}
|
\chi_*^{1-k}
(f_{\lambda_2}^k)'(h_{\lambda_2}(y_{k,j}))|^{-t_2}
\Big)\\
 \lesssim &
\|g\|_\infty \cdot 
\big(
\chi_{\lambda_1}^{-t_1}
\chi_{\lambda_2}^{-t_2}
+ \sum_{k}
(\chi_*/\sqrt{\chi_{\lambda_1}})^{kt_1}
(\chi_*/\sqrt{\chi_{\lambda_2}})^{kt_2}
\big)\lesssim \|g\|_\infty,
\end{aligned}\]
where in the second step
we used Lemma \ref{l:L1MS} and
in the last one 
the assumptions $\chi_* < \sqrt{\hat \chi}$ and $t_1+t_2>0$.

Therefore, we have
$\|\mathcal{L}
g\|_\infty 
\lesssim \|g\|_\infty $ 
for some implicit constant independent of $g$.

\medskip

We omit the proof of the boundedness on $C^\kappa (\mathcal T)$ as we will prove a more precise estimate in Lemma \ref{l:L4MS}.
\end{proof}

\begin{defn}
For $\mathcal{B} = C(\mathcal T)$ 
or $C^{\kappa}(\mathcal{T})$, we define
\begin{enumerate}
\item $\rho (\mathcal L_{t_1,\lambda_1, t_2, \lambda_2},
\mathcal{B})$ to be the spectral radius of $\mathcal{L}_{t_1,\lambda_1, t_2, \lambda_2} 
\colon \mathcal{B} \to \mathcal{B}$;
\item $
\rho_{ess} (\mathcal L_{t_1,\lambda_1, t_2, \lambda_2},
    \mathcal{B})
\defeq\inf \{
\rho 
(\mathcal L_{t_1,\lambda_1, t_2, \lambda_2} -K, \mathcal{B})\colon \mbox{rank } K < \infty
\};
$
\item $\eta (t_1, \lambda_1, t_2, \lambda_2)
\defeq \rho (\mathcal L_{t_1,\lambda_1, t_2,\lambda_2}, C(\mathcal T))$.
\end{enumerate}
\end{defn}

The main result of this section is the following proposition which states that, under suitable conditions, the transfer operator $\mathcal L_{t_1,\lambda_1, t_2,\lambda_2}$ is quasicompact on $C^{\kappa}(\mathcal{T})$.

\begin{prop}\label{p:MS-gen}
If 
$t_1,t_2 \in \mathbb R$
with $t_1+t_2>0$ and $\lambda_1, \lambda_2 \in N(0)$ are such that 
 \begin{equation}\label{e:t}
P(t_1,\lambda_1,t_2,\lambda_2) > - 
(\log \hat \chi/2) (t_1+t_2),
\quad \mbox{ i.e., }
\quad 
e^{P(t_1,\lambda_1,t_2,\lambda_2)} > \hat \chi^{- (t_1+t_2)/2
},\end{equation}
then 
the following properties hold:
\begin{enumerate}
    \item $\eta(t_1,\lambda_1, t_2, \lambda_2) = e^{P(t_1,\lambda_1, t_2, \lambda_2)}$;
    \item $\rho_{\rm ess}(\mathcal L_{t_1,\lambda_1, t_2,\lambda_2},
    C^{\kappa}(\mathcal{T})) < \rho(\mathcal L_{t_1,\lambda_1, t_2, \lambda_2},C^{\kappa}(\mathcal{T}))=\eta(t_1,\lambda_1, t_2, \lambda_2)$;
    \item $\eta(t_1,\lambda_1, t_2, \lambda_2)$
    is a simple eigenvalue of $\mathcal L_{t_1,\lambda_1, t_2, \lambda_2}$ 
    in $C^{\kappa}(\mathcal{T})$.
\end{enumerate}
\end{prop}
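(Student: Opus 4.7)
The plan is to adapt the spectral analysis of Makarov--Smirnov \cite{MakSmir03} to the present 4-parameter setting via the classical Ionescu-Tulcea--Marinescu / Hennion strategy. To simplify notation, write $\mathcal L = \mathcal L_{t_1,\lambda_1,t_2,\lambda_2}$ and $P = P(t_1,\lambda_1,t_2,\lambda_2)$.

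\emph{Step 1: Identification $\eta = e^{P}$.} Iterating the definition gives
\[
\mathcal L^n 1(x) = \sum_{T^n(y)=x} \prod_{i=1}^2 \bigl|T_{\lambda_i}^n(H_{\lambda_i}(y)) \bigr|'^{-t_i},
\]
where the derivative on the right-hand side is taken along the orbit and incorporates the factors $\chi_*^{1-k}$ from the tower. Preimages $y \in T^{-n}(x)$ decompose into an ``ascending piece'' inside a single column of $\mathcal T$, contributing factors $\chi_*^{-(t_1+t_2)}$ per step, followed by a ``horizontal jump'' corresponding to a preimage under some $F^k$. Summing the ascending contributions produces a geometric series which converges precisely under hypothesis \eqref{e:t}, and the remaining sum over horizontal jumps is, by Lemma \ref{l:L1MS} and the conjugacy $H_\lambda$, comparable to the sum $\sum_{z \in f_{\lambda_0}^{-n}(c(\lambda_0))} \prod_i |(f_{\lambda_i}^n)'(h_{\lambda_i}(z))|^{-t_i}$ appearing in \eqref{eq:def-pressure-gen}. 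Combining these identifications gives $\|\mathcal L^n 1\|_\infty^{1/n} \to e^P$, and a standard two-sided bound $\mathcal L^n 1 \asymp \|\mathcal L^n 1\|_\infty$ (coming from the bounded distortion of Lemma \ref{l:L2MS} and the positivity of $\mathcal L$) upgrades this to $\eta = e^P$.

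\emph{Step 2: Lasota--Yorke inequality.} I aim to prove, locally uniformly in the parameters, an estimate of the form
\[
\|\mathcal L^n g\|_\kappa \leq A\,\theta^n \|g\|_\kappa + B_n \|g\|_\infty,
\qquad \theta \defeq e^{P}\,\chi_*^{-\kappa},
\]
for all $n \geq 1$. Since $T$ sends each floor into itself or drops to the first floor, it suffices to estimate the H\"older difference $|\mathcal L^n g(x_1) - \mathcal L^n g(x_2)|$ for $x_1,x_2$ in a common floor. The preimages under $T^n$ pair up naturally, and on each pair the ratio of weights is controlled by Lemma \ref{l:L2MS} together with the uniform $\kappa$-H\"older continuity of $H_\lambda$ from Lemma \ref{l:Hlambda-logq} (which is the place where the conjugacies enter and forces the use of $\kappa < \gamma_0$). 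The expansion $T'\geq \chi_*$ gives the contraction factor $\chi_*^{-\kappa n}$ on the $\|\cdot\|_\kappa'$ part, while the ``total mass'' factor is $\eta^n = e^{Pn}$; multiplying these gives $\theta^n$. Hypothesis \eqref{e:t} is exactly what guarantees $\theta < \eta$ after taking $\kappa$ small.

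\emph{Step 3: Quasicompactness.} The tower $\mathcal T$ is not compact, but the diameters of the floors $\mathcal T_k$ shrink like $\chi_*^{k-1}\hat\chi^{-k/2}\to 0$ by Lemma \ref{l:L1MS} and the choice $\chi_*<\sqrt{\hat\chi}$, so any uniformly $\kappa$-H\"older family in $C^\kappa(\mathcal T)$ is equicontinuous with an explicit modulus on every floor and uniformly small in oscillation on high floors. A standard Ascoli--Arzel\`a/truncation argument then shows the unit ball of $C^\kappa(\mathcal T)$ is relatively compact in $C(\mathcal T)$. Combined with the Lasota--Yorke estimate of Step 2 and the identification of Step 1, Hennion's theorem yields $\rho_{\rm ess}(\mathcal L, C^\kappa(\mathcal T)) \leq \theta < \eta = \rho(\mathcal L, C^\kappa(\mathcal T))$, which is assertion (2).

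\emph{Step 4: Simplicity.} The operator $\mathcal L$ is positive and, through the coding by preimages under $T$, acts on a topologically mixing system (the mixing of $F$ on $J(F)$ lifts to $T$ on $\mathcal T$ minus a null set). A Perron--Frobenius / Birkhoff-cone argument in $C^\kappa(\mathcal T)$ then produces a strictly positive eigenfunction at $\eta$, and shows that $\eta$ is simple and isolated in the spectrum, giving (3).

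\emph{Main obstacle.} The crux of the argument is the Lasota--Yorke inequality in Step 2: the tower is infinite, the map $T$ is not Lipschitz across the boundaries $\partial U_k$, and one must balance very carefully (i) the $\kappa$-H\"older distortion of $H_{\lambda_i}$, whose modulus a priori deteriorates on high floors, against (ii) the contraction factor $\chi_*^{-\kappa n}$ from expansion, and (iii) the total mass factor $e^{Pn}$. The inequality $\chi_* < \sqrt{\hat\chi}$ together with the choice of $\kappa$ in the proof of Lemma \ref{l:Hlambda-logq}, and the hypothesis \eqref{e:t} on $P$, are all invoked at this single step to produce the gap $\theta < \eta$ uniformly for $(t_1,\lambda_1,t_2,\lambda_2)$ in a small neighborhood.
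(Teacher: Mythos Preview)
Your overall strategy matches the paper: establish a Lasota--Yorke inequality (the paper's Lemma~\ref{l:L4MS}) and then feed it into the Makarov--Smirnov quasicompactness argument. The paper in fact only writes out Lemma~\ref{l:L4MS} in detail and defers Steps~1, 3, 4 to \cite{MakSmir03}, so your sketch is a faithful (and more explicit) version of the same proof.

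Where you go astray is in locating the role of hypothesis~\eqref{e:t}. In Step~2 you set $\theta = e^{P}\chi_*^{-\kappa}$ and assert that \eqref{e:t} ``is exactly what guarantees $\theta < \eta$''. But by your own Step~1 you have $\eta = e^{P}$, so $\theta/\eta = \chi_*^{-\kappa} < 1$ is automatic for any $\kappa>0$; no hypothesis on $P$ is needed there. Likewise, the Step~1 claim that the ascending geometric series converges ``precisely under hypothesis~\eqref{e:t}'' is off: the vertical weight per step is $\chi_*^{-(t_1+t_2)}<1$ as soon as $t_1+t_2>0$. The paper flags explicitly that \eqref{e:t} enters \emph{after} Lemma~\ref{l:L4MS}, in the part deferred to \cite{MakSmir03}: it is the condition that $e^{P}$ dominates the rate $\hat\chi^{-(t_1+t_2)/2}$ at which mass can escape up the non-compact tower, and it is what allows one to produce a positive eigenfunction in $C^{\kappa}(\mathcal T)$ at $\eta$ (hence $\rho(\mathcal L,C^{\kappa})=\eta$ and simplicity). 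Without \eqref{e:t} the normalized iterates $\eta^{-n}\mathcal L^n 1$ can degenerate on high floors and no $C^\kappa$ eigenfunction need exist.

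A minor formulation difference: the paper's Lasota--Yorke reads $\|\mathcal L^n g\|_\kappa' \le c_n\,\eta^n\,\|g\|_\kappa' + C\,\|\mathcal L^n|g|\|_\infty$, keeping $\|\mathcal L^n|g|\|_\infty$ on the right rather than replacing it by $B_n\|g\|_\infty$; this is the shape that plugs directly into the argument of \cite{MakSmir03}.
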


The first assertion can be proved in the same way as in \cite[Section 2.4]{MakSmir03}, 
by using Lemma \ref{l:L1MS} instead of \cite[Lemma 1]{MakSmir03}. 
The key point in the proof of Proposition 
\ref{p:MS-gen} is the following lemma, giving a 
Lasota-Yorke
estimate
for the iterated transfer operator  with respect to the semi-norm $\|\cdot\|'_{\kappa}$. Once Lemma \ref{l:L4MS} 
is established, 
the rest of the proof carries on as in \cite{MakSmir03} with minimal modifications.
Assumption \eqref{e:t} is used in 
that part
of the proof. Observe that $t_1+t_2>0$ plays the role of $t$ in \cite{MakSmir03}, as in Lemma \ref{l:cont-L}.

\begin{lem}\label{l:L4MS}
For all 
$t_1,t_2 \in \mathbb R$ with $t_1+t_2>0$
and $\lambda_1,\lambda_2\in N(0)$,
there exist a sequence $c_n$ with $0 < c_n\to 0$ 
and a constant $C>0$ such that for
every $n\ge1$, and every $g \in C^{\kappa}(\mathcal T)$, 
we have
$$\|\mathcal L^n_{t_1,\lambda_1, t_2, \lambda_2}
g\|'_{\kappa} \le c_n
\eta(t_1,\lambda_1, t_2, \lambda_2)^n
\|g\|'_{\kappa} +C \|\mathcal L^n_{t_1,\lambda_1, t_2, \lambda_2}|g| \|_{\infty}.$$
\end{lem}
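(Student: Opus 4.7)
The plan is to adapt the classical Lasota--Yorke argument to the tower dynamics, following the strategy of \cite{MakSmir03} but working on the $\kappa$-H\"older space $C^{\kappa}(\mathcal T)$ and carrying the two independent weights simultaneously. Write $\mathcal L \defeq \mathcal L_{t_1,\lambda_1,t_2,\lambda_2}$ and let $W_n(y) \defeq \prod_{j=0}^{n-1} |R_{\lambda_1}(T^j y)|^{-t_1} |R_{\lambda_2}(T^j y)|^{-t_2}$, so that $\mathcal L^n g(x) = \sum_{T^n y = x} g(y) W_n(y)$. Take two points $x, x'$ on the same floor of $\mathcal T$ with $d_\mathcal T(x, x') \le r_0$ for some small $r_0$ (pairs farther apart, or on distinct floors, contribute at most $2 r_0^{-\kappa} \|\mathcal L^n |g|\|_\infty$ to the H\"older seminorm, which feeds into the second term of the target inequality). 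Using the uniform expansion of $T$ together with the bounded distortion supplied by Lemma \ref{l:L2MS}, I pair each preimage $y$ of $x$ under $T^n$ with the corresponding preimage $y'$ of $x'$ along the same inverse branch; this gives $d_\mathcal T(y, y') \le C (T^n)'(y)^{-1} d_\mathcal T(x, x')$ with $C$ independent of $n$ and of the branch. Telescoping,
\begin{equation*}
\mathcal L^n g(x) - \mathcal L^n g(x') = \sum_y [g(y) - g(y')] W_n(y) + \sum_y g(y') [W_n(y) - W_n(y')].
\end{equation*}

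The first sum is the contractive part. Using $|g(y) - g(y')| \le \|g\|'_\kappa d_\mathcal T(y, y')^{\kappa}$ together with the contraction of inverse branches and the uniform expansion rate $(T^n)'(y) \ge \xi^{n}$ for some $\xi > 1$ provided by the choice $\chi_* < \sqrt{\hat\chi}$ and Lemma \ref{l:L1MS}, one gets
\begin{equation*}
\sum_y |g(y) - g(y')| W_n(y) \le C^{\kappa} \xi^{-\kappa n} \|g\|'_\kappa \, d_\mathcal T(x, x')^{\kappa} \, \mathcal L^n \mathbf 1(x).
\end{equation*}
By part (1) of Proposition \ref{p:MS-gen}, whose proof mirrors \cite[Section 2.4]{MakSmir03}, we have $\|\mathcal L^n \mathbf 1\|_\infty \le (\eta + \epsilon)^n$ for any $\epsilon > 0$ and $n$ large, where $\eta = \eta(t_1,\lambda_1,t_2,\lambda_2)$. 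Choosing $\epsilon$ small enough that $\xi^{-\kappa}(1 + \epsilon/\eta) < 1$ absorbs the growth into the contraction and yields the bound $c_n \eta^n \|g\|'_\kappa d_\mathcal T(x, x')^{\kappa}$ with a sequence $c_n \to 0$.

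For the second sum I need uniform H\"older control of the weight $W_n$ along inverse branches. The approach is to decompose $\log W_n(y) - \log W_n(y')$ into a telescoping sum over the $n$ iterates and to bound each term using Lemma \ref{l:L2MS} (bounded distortion for $T_{\lambda_i}^n$ transferred via the conjugations) and Lemma \ref{l:Hlambda-logq} ($\kappa$-H\"older regularity of $H_{\lambda_i}$). The key cancellation is that $T^j y$ and $T^j y'$ lie in the same $(n-j)$-cylinder and their distances form a geometric series, so the total contribution is bounded by $C_0 d_\mathcal T(x, x')^{\kappa}$ uniformly in $n$; exponentiating and applying bounded distortion $W_n(y) \le C W_n(y')$ produces $|W_n(y) - W_n(y')| \le C'_0 W_n(y) d_\mathcal T(x, x')^{\kappa}$. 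The second sum is then controlled by $C_1 d_\mathcal T(x, x')^{\kappa} \mathcal L^n |g|(x') \le C_1 d_\mathcal T(x, x')^{\kappa} \|\mathcal L^n |g|\|_\infty$. The main obstacle I anticipate is precisely obtaining this uniform H\"older estimate for $W_n$: the infinite branching of $T$ on the first floor together with the non-uniform scaling of the tower metric make the floor-by-floor summation delicate, and one must verify that all constants remain uniform locally in $t_1, \lambda_1, t_2, \lambda_2 \in N(0)$ so that the same $c_n$ and $C$ work throughout the range of parameters needed by Proposition \ref{p:MS-gen}.
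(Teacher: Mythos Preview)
Your proposal is correct and follows essentially the same Lasota--Yorke strategy as the paper's proof. The only cosmetic difference is that the paper splits the weight $W_n$ into its two factors $|R_{\lambda_1}|^{-t_1}$ and $|R_{\lambda_2}|^{-t_2}$ and obtains a four-term decomposition (your first sum plus three cross-terms), applying the mean value theorem directly with Lemma~\ref{l:L2MS} rather than telescoping $\log W_n$; the two treatments of the distortion are equivalent and both rely on Lemmas~\ref{l:L2MS} and~\ref{l:Hlambda-logq} in the same way.
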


\begin{proof}
As above, for ease of notation, we will only give the proof for the case where $d=3$ and we will simply
denote $\mathcal L_{t_1, \lambda_1, t_2, \lambda_2}$
by $\mathcal L$.
We also only consider the restriction to $U_1 \times \{1\}$, as the estimates are simpler otherwise (since only one preimage is involved and the system is expanding).

\medskip

The $n$-th iterate of $\mathcal L$ is given by
\[
\mathcal{L}
^n g(x) = \sum_{T^n(y) = x} g(y) \cdot
\left(
\prod_{j=0}^{n-1}
|R_{\lambda_1}(T^{j}(y))|^{-t_1}
|R_{\lambda_2}(T^{j}(y))|^{-t_2}
\right)
\quad \mbox{ for all }
g \in C(\mathcal T).
\]
Hence, 
we have
\[
\begin{aligned}
\|\mathcal L^n
g\|'_{\kappa} &=
\sup_{\substack{a,b\in \mathcal T}}
\frac{1}{d_{\mathcal{T}}(a,b)^{\kappa}} \cdot|\mathcal L^n
g(a)-
\mathcal L^n
g(b)|\\
& =\sup_{\substack{a,b\in \mathcal T}}
\frac{1}{d_{\mathcal{T}}(a,b)^{\kappa}} \cdot
\Big|\sum_{T^n(y_a) = a}  g(y_a)
\Big(
\prod_{j=0}^{n-1}
|R_{\lambda_1}(T^{j}(y_a))|^{-t_1}
|R_{\lambda_2}(T^{j}(y_a))|^{-t_2}
\Big)
\\
& \quad  \quad -\sum_{T^n(y_b) = b} g(y_b)
\Big(
\prod_{j=0}^{n-1}
|R_{\lambda_1}(T^{j}(y_b))|^{-t_1}
|R_{\lambda_2}(T^{j}(y_b))|^{-t_2}
\Big)
\Big|.
\end{aligned}\]

We now recall (see Example \ref{ex:deg3-preim}) that we can pair the $n$-preimages of $a$ and $b$ under $F^n$
depending on the set of $P_n$ to which they belong. We will then denote these preimages by $\{a_i\}$ and $\{b_i\}$,
meaning that, for every $i_0$, $a_{i_0}$
and $b_{i_0}$ belong to the same component of $P_n$. The above expression is then bounded by $I+ II+III+IV$, where

\begin{align*}
I & \defeq \sup_{\substack{a,b\in \mathcal T}}
\frac{1}{d_{\mathcal{T}}(a,b)^{\kappa}} \cdot \sum_{i} |g(a_i)-g(b_i)|
\cdot
\Big(
\prod_{j=0}^{n-1}
|R_{\lambda_1}(T^{j}(a_i))|^{-t_1}
|R_{\lambda_2}(T^{j}(a_i))|^{-t_2}
\Big),\\
II & \defeq
\sup_{\substack{a,b\in \mathcal T}}
\frac{1}{d_{\mathcal{T}}(a,b)^{\kappa}} \cdot \sum_{i}|g(b_i)|\cdot  \Big|
\prod_{j=0}^{n-1}
|R_{\lambda_1}(T^{j}(b_i))|^{-t_1}
-
\prod_{j=0}^{n-1}
|R_{\lambda_1}(T^{j}(a_i))|^{-t_1}
\Big|
\cdot
\prod_{j=0}^{n-1}
|R_{\lambda_2}(T^{j}(b_i))|^{-t_2},
\\
III & \defeq
\sup_{\substack{a,b\in \mathcal T}}
\frac{1}{d_{\mathcal{T}}(a,b)^{\kappa}} \cdot \sum_{i}|g(b_i)|\cdot 
\prod_{j=0}^{n-1}
|R_{\lambda_1}(T^{j}(b_i))|^{-t_1}
\cdot
\Big|
\prod_{j=0}^{n-1}
|R_{\lambda_2}(T^{j}(b_i))|^{-t_2}
-
\prod_{j=0}^{n-1}
|R_{\lambda_2}(T^{j}(a_i))|^{-t_2}
\Big|,\quad \mbox{ and }\\
IV & \defeq
\sup_{\substack{a,b\in \mathcal T}}
\frac{1}{d_{\mathcal{T}}(a,b)^{\kappa}} \cdot \sum_{i}|g(b_i)|\cdot 
\prod_{j=0}^{n-1}
|R_{\lambda_1}(T^{j}(b_i))|^{-t_1}
-
\prod_{j=0}^{n-1}
|R_{\lambda_1}(T^{j}(a_i))|^{-t_1}
\Big|
\cdot\\
& \quad \quad
\quad \quad
\quad \quad
\quad \quad
\quad \quad
\quad \quad
\quad \quad
\quad \quad  \Big|
\prod_{j=0}^{n-1}
|R_{\lambda_2}(T^{j}(b_i))|^{-t_2}
-
\prod_{j=0}^{n-1}
|R_{\lambda_2}(T^{j}(a_i))|^{-t_2}
\Big|.
\end{align*}

For $I$, we have
\begin{align*}
I &
\le \|\mathcal{L}
^n 1\|_\infty \cdot \sup_{\substack{a,b\in \mathcal T}}
\frac{\max_{i} |g(a_i)-g(b_i)|}{d_{\mathcal{T}}(a,b)^{\kappa}}
\le \|\mathcal{L}
^n 1\|_\infty \cdot \sup_{\substack{a,b\in \mathcal T}}
\frac{\max_i \|g\|_{\kappa}' \cdot d_{\mathcal{T}}(a_i,b_i)^\kappa}{d_{\mathcal{T}}(a,b)^{\kappa}}  \\
& \le
\|\mathcal{L}
^n 1\|_\infty \cdot \|g\|_{\kappa}'\cdot c_n\lesssim
\eta(t_1,\lambda_1, t_2, \lambda_2)^n
\cdot\|g\|_{\kappa}'\cdot  c_n
\end{align*}
for some $c_n$ as in the statement.
In the third inequality, we have used the fact that $\max_i d_\mathcal{T}(a_i,b_i) \le d_{\mathcal{T}}(a,b)\cdot c'_n$ for some $c'_n>0$ 
with $c'_n \to 0$ as $n \to \infty$,
since $T$ is expanding.

\medskip

For $II$, observe that we have $II=0$ if $t_1=0$. Otherwise, we compute
\begin{align*}
II & =\sup_{\substack{a,b\in \mathcal T}}
\frac{1}{d_{\mathcal{T}}(a,b)^{\kappa}}
\cdot \sum_{i}|g(b_i)|\cdot  \left|
|(T_{\lambda_1}^n)'(H_{\lambda_1}(a_i))|^{-t_1} - |(T_{\lambda_1}^n)'(H_{\lambda_1}(b_i))|^{-t_1}
\right|
\cdot
\left|
(T^n_{\lambda_2})' (H_{\lambda_2} (b_i))
\right|^{-t_2}
\\
& \lesssim
 |t_1|
\cdot 
\sup_{\substack{a,b\in \mathcal T}} \frac{1}{d_{\mathcal{T}}(a,b)^{\kappa}} \cdot
\sum_{i}
|g(b_i)|
 \left|
\frac{
(T_{\lambda_1}^n)''(H_{\lambda_1}(b_i))}{(T_{\lambda_1}^n)'(H_{\lambda_1}(b_i))|^{t_1+2}}
 \right|
 \cdot
\left| (T_{\lambda_2}^n)'(H_{\lambda_2}(b_i))\right|^{-t_2}
 \cdot d_{\mathcal{T}}(H_\lambda(y_a),H_\lambda(y_b))
\\
 & \lesssim 
  |t_1|
  \cdot
\|\mathcal{L}^{n}
|g|\|_\infty \cdot\sup_{\substack{a,b\in \mathcal T}} \frac{1}{d_{\mathcal{T}}(a,b)^{\kappa}} \cdot \max_{i} \frac{|(T^n_\lambda)''(H_\lambda(b_i))|}{|(T^n_\lambda)'(H_\lambda(b_i))|^2}
 \cdot 
  d_{\mathcal{T}}(H_\lambda(a_i),H_\lambda(b_i))\\ 
 & \lesssim
|t_1| \cdot \|\mathcal{L}^{n}
 |g|\|_\infty \cdot\sup_{\substack{a,b\in \mathcal T}} \frac{1}{d_{\mathcal{T}}(a,b)^{\kappa}}
 \cdot
 d_{\mathcal{T}}(a,b)^\kappa
 = |t_1| \cdot \|\mathcal{L}^{n}
 |g|\|_\infty,
\end{align*}
where in the last 
line
we used 
Lemmas \ref{l:L2MS} and
\ref{l:Hlambda-logq}.
For $III$, we can use the same steps as for $II$,
up to reversing the role of $(t_1, \lambda_1)$
and $(t_2,\lambda_2)$. The term $IV$ is similarly
bounded by 
$\|\mathcal{L}^{n} |g|\|_\infty$
using the same estimates as in $II$
and Lemma \ref{l:L2MS}
on both the differences of products inside the sum (in particular, we have $IV=0$ if either $t_1=0$ or $t_2=0$).
The proof is complete. 
\end{proof}

\subsection{Holomorphicity of complex transfer operators} \label{sec_3.4}
To prove Theorem \ref{t:new-analytic-all}, we will need to establish the \emph{joint analyticity} of the pressure function $P$ in all its parameters. 
To do this, we will need to consider suitable \emph{complex} extensions of the operators $\mathcal L_{t,\lambda}$. This will be done in the next section. In this section, we collect the definitions and tools that we will need concerning this extension.
Observe that Proposition \ref{p:SU-full} will be used also in Section \ref{sec_5}
to prove the analyticity of the pressure form on the unit tangent bundle.

\medskip

Let $W$ be an open subset of $\mathbb{C}^\ell$ for some $\ell \ge 1$. We let $\mathcal L_w$ be a family of operators of the form
\[
\mathcal L_w g (x)
\defeq
\sum_{T(y)=x} e^{\zeta_w (y)}
g (y),
\]
where $\zeta_w \colon T^{-1} (\mathcal{T}) \to \mathbb{C}$ is a complex-valued function.
Assuming
that,
for every $w$,
$\mathcal L_w$ is a bounded linear
operator on the space
$C^{\kappa}(\mathcal T, \mathbb{C})$
of complex valued $\kappa$-H\"older 
continuous functions on $\mathcal T$,
we say that 
the
map $W \ni w \mapsto \mathcal{L}_w$ 
is {\it holomorphic} if for every $w \in W$, there exists
a bounded operator
$\mathcal{L}'_w$ on $C^{\kappa} (\mathcal T)$ 
such that $||
h^{-1}  ( \mathcal{L}_{ w+h}-\mathcal{L}_w )
-\mathcal{L}'_w||_{\kappa} \to 0$ as $h\to0$.

\begin{prop}\label{p:SU-full}
Let $\{\zeta_w\}_{w \in W}$
be a family of complex valued functions 
on $T^{-1} (\mathcal T)$.
Assume also
that
\begin{enumerate}
\item there exists $w_0 \in W$
such that $\mathcal L_{{w_0}}$ is a bounded operator on
$C^{\kappa}(\mathcal T, \mathbb{C})$;

\item the 
family of
functions $\{w \mapsto \sum_{\hat T} e^{\zeta_w} \circ \hat{T}\}_{w\in W}$
is uniformly
continuous with respect to $\|\cdot\|_{\kappa}$ on 
$U_1 \times \{1\}$,
where $\hat  T:
U_1 \times \{1\}
\to
T^{-1}(
U_1 \times \{1\}
)$
ranges over all
inverse branches of $T$ on 
$U_1 \times \{1\}$;
\item 
the family of functions $\{\zeta_w\}_{w\in W}$ 
is uniformly
continuous with respect to $\|\cdot\|_{\kappa}$ on $\mathcal T\setminus
(U_1 \times \{1\})$;

\item for every $x \in \mathcal T$,
the function $W \ni w \mapsto \zeta_w(x)$ is holomorphic.
\end{enumerate}
Then, the map $W \ni w \mapsto \mathcal{L}_{w}$ 
is holomorphic (and in particular consists of bounded operators). 
\end{prop}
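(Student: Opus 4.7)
The plan is to prove that $w \mapsto \mathcal{L}_w$ is a holomorphic family of bounded operators in three stages: first, show that $\mathcal{L}_w$ is a bounded operator on $C^\kappa(\mathcal{T},\mathbb{C})$ for every $w \in W$; second, show that the assignment is continuous in operator norm; third, promote pointwise (weak) holomorphicity in $w$ to strong (norm) holomorphicity via the standard Banach-space-valued Cauchy integral formula. Throughout, the delicate point is that on the first floor $U_1 \times \{1\}$ each point has infinitely many inverse branches, so assumption (2) plays a role fundamentally different from assumption (3), which governs the single-preimage regime on $\mathcal{T}\setminus (U_1\times\{1\})$.

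For the first stage, I would cover $W$ by neighborhoods $U$ of points $w_1$ (starting at $w_0$) on which
\[
\mathcal{L}_w g(x) - \mathcal{L}_{w_1} g(x) = \sum_{T(y)=x}\bigl(e^{\zeta_w(y)}-e^{\zeta_{w_1}(y)}\bigr) g(y)
\]
can be bounded in $\|\cdot\|_\kappa$ using the uniform H\"older moduli supplied by (2) and (3). On $\mathcal{T}\setminus (U_1\times\{1\})$ each point has a single preimage and (3), combined with the local Lipschitz behavior of $z\mapsto e^z$, suffices. On $U_1\times\{1\}$, the countable sum is treated via the four-term decomposition $I+II+III+IV$ already used for Lemma \ref{l:L4MS}: assumption (2) controls the aggregate weight sum $\sum_{\hat T} e^{\zeta_w}\circ\hat T = \mathcal{L}_w 1$ uniformly in $w$, and the residual terms are absorbed using (3) branch-by-branch together with bounded distortion (Lemma \ref{l:L2MS}). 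This yields $\|\mathcal{L}_w - \mathcal{L}_{w_1}\|_{\kappa\to\kappa}$ small, hence boundedness of $\mathcal{L}_w$ in a neighborhood of $w_1$; iterating the argument covers all of $W$.

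The same decomposition delivers stage two, namely the norm continuity $\|\mathcal{L}_w - \mathcal{L}_{w'}\|_{\kappa\to\kappa}\to 0$ as $w' \to w$. For stage three, fix $g \in C^\kappa$ and $x \in \mathcal{T}$; then $w \mapsto \mathcal{L}_w g(x)$ is a (at most countable) sum of terms $g(y) e^{\zeta_w(y)}$, each holomorphic in $w$ by (4). Assumption (2) yields local uniform absolute convergence of the series on a neighborhood of any $w$, so the sum defines a holomorphic function on $W$. Consequently every matrix coefficient $w \mapsto \varphi(\mathcal{L}_w g)$, for $\varphi$ in the continuous dual of $C^\kappa$, is holomorphic, and combined with the operator-norm continuity established in stage two, the Cauchy integral formula applied to operator-valued maps upgrades this weak holomorphicity to strong holomorphicity, with derivative $\mathcal{L}'_w g(x) = \sum_{T(y)=x}\partial_w\bigl(e^{\zeta_w(y)}\bigr) g(y)$.

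The principal obstacle is the one that already dominates Lemma \ref{l:L4MS}: controlling the H\"older seminorm on $U_1 \times \{1\}$ when comparing $\mathcal{L}_w g$ at nearby points $a$ and $b$ forces one to pair the infinitely many inverse branches and exploit bounded distortion (Lemma \ref{l:L2MS}) together with the uniform H\"older continuity of the conjugations $H_\lambda$ (Lemma \ref{l:Hlambda-logq}). The cleanest execution is therefore to mirror the $I+II+III+IV$ decomposition of that earlier proof, with differences of the form $e^{\zeta_w} - e^{\zeta_{w'}}$ replacing the differences of weights coming from distinct parameters $(t_1,\lambda_1,t_2,\lambda_2)$ used there; assumption (2) is precisely what allows the infinite-sum piece of this decomposition to be handled without further hyperbolicity input.
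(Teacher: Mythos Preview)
Your overall architecture---prove norm continuity of $w\mapsto\mathcal L_w$, then combine with the pointwise holomorphicity from (4) to obtain norm holomorphicity via a Cauchy-integral / weak-to-strong argument---is exactly the paper's strategy (the paper cites \cite[Lemma 7.1]{UZ04real} or \cite[Lemma 5.1]{SU10} for this last step). The difference is in how you establish the continuity on $U_1\times\{1\}$. You propose to replay the four-term decomposition of Lemma \ref{l:L4MS} and to invoke the distortion Lemmas \ref{l:L2MS} and \ref{l:Hlambda-logq} branch-by-branch; this is an unnecessary detour and also methodologically off, since Proposition \ref{p:SU-full} is an abstract statement and those dynamical lemmas are not among its hypotheses. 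The point of assumption (2) is that it already gives uniform continuity of $w\mapsto\sum_{\hat T}e^{\zeta_w}\circ\hat T$ in the \emph{full} H\"older norm $\|\cdot\|_\kappa$ on $U_1\times\{1\}$, so both the sup-norm and the H\"older seminorm of $(\mathcal L_w-\mathcal L_{w'})g$ are controlled directly: the paper uses a simple two-term split
\[
(\mathcal L_w-\mathcal L_{w'})g(x)-(\mathcal L_w-\mathcal L_{w'})g(x')
\]
into a piece with $g(\hat T(x))-g(\hat T(x'))$ times the weight difference at $x$, and a piece with $\|g\|_\infty$ times the H\"older variation of the summed weight difference, both bounded by assumption (2) alone. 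Your plan would work, but you are re-deriving by hand what (2) already hands you; the branch-by-branch use of (3) with bounded distortion is where the paper's \emph{verification} of (2) (Lemmas \ref{l:314a} and \ref{l:314b}) lives, not the proof of this proposition.
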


\begin{rmk}
The third condition in Proposition \ref{p:SU-full}
could be replaced by
\begin{itemize}
\item[(3')] 
 the family of functions $\{e^{\zeta_w} \circ T^{-1}\}_{w\in W}$ 
is uniformly
continuous with respect to $\|\cdot\|_{\kappa}$ on $\mathcal T\setminus
(U_1 \times \{1\})$;
\end{itemize}
which we recognize as the same condition as (2) for points
in
$\mathcal T \setminus 
(U_1 \times \{1\})$, which have only one preimage under $T$.
\end{rmk}

\begin{proof}
As in \cite[Proposition 2.9]{BH24},
it is enough to show that the map $W \ni w \mapsto \mathcal{L}_{w}$ is continuous as a family of operators on $C^{\kappa}(\mathcal T,\mathbb{C})$. This, together with the fourth assumption, implies the holomorphicity of $\mathcal L_w$; see for instance \cite[Lemma 7.1]{UZ04real} or \cite[Lemma 5.1]{SU10}.

For every $x \in 
U_1 \times \{1\}$, we have
\begin{align*}
|(\mathcal{L}_w-\mathcal{L}_{w'})g(x)| & = 
\Big|\sum_{T(y)=x}\left(e^{\zeta_w(y)} - e^{\zeta_{w'}(y)}\right)g(y)\Big| 
 \le \|g\|_\infty \cdot \Big|\sum_{T(y)=x}(e^{\zeta_w} - e^{\zeta_{w'}})(y)\Big| \\
&= \|g\|_\infty \cdot \Big|\Big(\sum_{\hat T} e^{\zeta_w} \circ \hat T - \sum_{\hat T} e^{\zeta_{w'}}\circ \hat T \Big)  (x)\Big|.
\end{align*}

Therefore,
we have
$$\|(\mathcal{L}_w-\mathcal{L}_{w'})g\|_{L^\infty (
U_1 \times \{1\})} 
\lesssim \|g\|_\infty \cdot  \Big|
\sum_{\hat T} e^{\zeta_w} \circ \hat{T} - \sum_{\hat T} e^{\zeta_{w'}} \circ \hat{T}\Big|_{L^\infty (
U_1 \times \{1\}
)}.$$
The last term goes to zero as $w'\to w$
thanks to the second assumption.
The same steps as above can be used
to show the continuity in $L^\infty (\mathcal T\setminus 
(U_1 \times \{1\}))$, using the third assumption (where the sum over the inverse branches
contains now only one element).

\medskip

We then check the continuity 
with respect to
the semi-norm 
$\|\cdot\|'_\kappa$.
As above, we check it on 
$U_1 \times \{1\}$
using the second assumption, and the check on $\mathcal T\setminus
(U_1 \times \{1\})$ is completely analogous (and simpler) using the third assumption.

For every
$x,x' \in 
U_1 \times \{1\}$
and $w,w'\in W$,
we have
\begin{align*}
| (\mathcal{L}_w-\mathcal{L}_{w'})g(x)  -& (\mathcal{L}_w  -\mathcal{L}_{w'})g(x')| \\
& = \Big|\sum_{T(y)=x}e^{\zeta_w(y)}g(y)- e^{\zeta_{w'}(y)}g(y) - \sum_{T(y')=x'}e^{\zeta_w(y')}g(y')- e^{\zeta_{w'}(y')}g(y')\Big|\\
&\le 
\Big|\sum_{\hat T}
\big(
e^{\zeta_{w}(\hat T (x))}-e^{\zeta_{w'}(\hat T (x))}
\big)
\big(
g(\hat T (x))-g(\hat T(x'))
\big) 
\Big|\\
& \quad \quad + \|g\|_\infty \cdot 
\Big|
\sum_{\hat T}
(e^{\zeta_{w}(\hat T (x))}-e^{\zeta_{w'}(\hat T (x))})-(e^{\zeta_{w}(\hat T (x'))}-e^{\zeta_{w'}(\hat T (x'))})\Big|.
\end{align*}

Therefore, we have
$$
\begin{aligned}
\big\|
\big(
(\mathcal{L}_w-\mathcal{L}_{w'})g
\big)
|_{
U_1 \times \{1\}}
\big\|_\kappa'
 \lesssim & \Big\|
 \sum_{\hat T} e^{\zeta_{w}} \circ \hat T - \sum_{\hat T} e^{\zeta_{w'}} \circ \hat T \Big\|_{L^\infty (
  U_1 \times \{1\}
 )}
\cdot \|g\|_\kappa'\\
&+ \|g\|_\infty \cdot 
\Big\|
\big(
\sum_{\hat{T}} e^{\zeta_{w}} \circ \hat{T} - \sum_{\hat{T}} e^{\zeta_{w'}}\circ \hat{T}
\big) |_{
 U_1 \times \{1\}
}\Big\|_\kappa'.
\end{aligned}$$
As before, the last term goes to zero as $w'\to w$ thanks to the second assumption.
This completes the proof.
\end{proof}

\subsection{Extension of analytic functions}\label{ss:extension-analytic}
For every integer $d \ge 1$, consider the embedding $\iota_d \colon \mathbb{C}^d \to \mathbb{C}^{2d}$ given by 
\begin{equation}\label{eq_iota}
(x_1+iy_1,\ldots,x_d+iy_d) \mapsto (x_1,y_1,\ldots,x_d,y_d).
\end{equation}
Observe, in particular,
that $\mathbb C^d$ is embedded by $\iota_d$ in $\mathbb C^{2d}$ as the set of points of real coordinates
(which, in turn, is 
parametrized by $\mathbb C^d$ by means of $\iota_d$). 
For every $z \in \mathbb{C}^\ell$
and every $r>0$, denote by $D_\ell(z,r)$ 
the 
$\ell$-dimensional 
polydisk in $\mathbb{C}^\ell$
centered at $z$ and with radius $r$.
Observe that, with the above identification, we 
have $\iota_d (D_d (0,r))\subset D_{2d} (0,r)$ and, more generally,
$\iota_d (D_d (z,r))\subset D_{2d} (\iota_d(z),r)$
for every $z \in \mathbb C^d$. 

\begin{lem}[{\cite[Lemma 6.4]{SU10}}] \label{lem_6.3}
For every $M \ge 0$, 
$R>0$, 
 $\lambda_0 \in \bbC^d$, and 
 every complex analytic function
$\phi: D_d(\lambda_0,R) \to \bbC$
which is bounded in modulus by $M$, there exists a complex analytic function $\tilde{\phi} : D_{2d}(\iota_d (\lambda_0),R/4) \to \bbC$ that is bounded in modulus by $4^dM$ and such that the restriction of $\tilde \phi \circ \iota_d$ 
to
$D_d(\lambda_0,R/4)$ 
coincides with the real part $\Re(\phi)$ of $\phi$.
\end{lem}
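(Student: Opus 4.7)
The plan is to give the extension $\tilde\phi$ by an explicit closed-form formula that mirrors the identity $\Re\phi(\lambda)=\tfrac{1}{2}(\phi(\lambda)+\overline{\phi(\lambda)})$ on the real slice, complexified by letting the real and imaginary parts of $\lambda-\lambda_0$ become independent complex variables. Write $\lambda_0 = a+ib$ with $a_j\defeq\Re\lambda_{0,j}$ and $b_j\defeq\Im\lambda_{0,j}$, so that $\iota_d(\lambda_0)=(a_1,b_1,\dots,a_d,b_d)$. On the polydisc $D_{2d}(\iota_d(\lambda_0),R/4)$ with coordinates $(Z_1,W_1,\dots,Z_d,W_d)$, introduce $z_j\defeq Z_j-a_j$, $w_j\defeq W_j-b_j$ (so $|z_j|,|w_j|<R/4$) and set
\[
\tilde\phi(Z,W) \defeq \tfrac12\Bigl[\phi\bigl(\lambda_0+z+iw\bigr)+\overline{\phi\bigl(\lambda_0+\overline z+i\overline w\bigr)}\Bigr],
\]
where the arithmetic is componentwise and $z\pm iw=(z_1\pm iw_1,\dots,z_d\pm iw_d)$.

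The first step is to check well-definedness on $D_{2d}(\iota_d(\lambda_0),R/4)$. For each $j$ we have $|z_j+iw_j|\le|z_j|+|w_j|<R/2<R$, and the same estimate holds for $|\overline z_j+i\overline w_j|=|z_j-iw_j|$, so both arguments of $\phi$ lie in $D_d(\lambda_0,R)$ and the formula is legitimate. For holomorphicity: the first summand is the composition of the holomorphic function $\phi$ with the $\bbC$-affine map $(Z,W)\mapsto \lambda_0+z+iw$, hence is holomorphic in $(Z,W)$. For the second summand, the map $(Z,W)\mapsto \lambda_0+\overline z+i\overline w$ is anti-holomorphic, and its composition with the anti-holomorphic function $\mu\mapsto \overline{\phi(\mu)}$ is holomorphic; two anti-holomorphic operations compose to a holomorphic one, and this is the one subtle point in the whole argument.

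The remaining verifications are bookkeeping. For $\lambda\in D_d(\lambda_0,R/4)$, from $|\lambda_j-\lambda_{0,j}|^2=(\Re\lambda_j-a_j)^2+(\Im\lambda_j-b_j)^2<(R/4)^2$ we get $|\Re\lambda_j-a_j|,|\Im\lambda_j-b_j|<R/4$, so $\iota_d(\lambda)\in D_{2d}(\iota_d(\lambda_0),R/4)$. For such a point $Z_j=\Re\lambda_j$ and $W_j=\Im\lambda_j$ are real, hence $\overline z=z$ and $\overline w=w$, so $\tilde\phi(\iota_d(\lambda))=\Re\bigl[\phi(\lambda_0+z+iw)\bigr]=\Re\phi(\lambda)$ since $z_j+iw_j=\lambda_j-\lambda_{0,j}$. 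Finally, each summand has modulus at most $M$ by hypothesis, so $|\tilde\phi|\le M\le 4^d M$ on the polydisc.

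The main obstacle, such as it is, is exactly the holomorphicity check for the second summand, where one must resist the impression that complex conjugation breaks analyticity. Everything else reduces to the elementary polydisc inclusion $|z|,|w|<R/4\Rightarrow|z\pm iw|<R/2<R$, which is precisely why the radius shrinks from $R$ to $R/4$ (or more sharply to $R/2$ with the present argument); the constant $4^d$ is generous and any polynomial-type factor would suffice.
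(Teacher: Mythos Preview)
Your argument is correct. The paper does not prove this lemma at all; it simply quotes it from \cite[Lemma 6.4]{SU10}, so there is no in-paper proof to compare against. Your explicit formula is precisely the standard complexification of the identity $\Re\phi=\tfrac12(\phi+\overline\phi)$, and all verifications (well-definedness via $|z_j\pm iw_j|<R/2$, holomorphicity of the conjugate term as a composition of two anti-holomorphic maps, the restriction to the real slice, and the bound) are carried out cleanly. As you note, the bound $4^dM$ and the radius $R/4$ are not sharp with this construction --- you actually get the bound $M$ and could work on radius $R/2$ --- but the stated constants are what the paper uses downstream, so nothing is lost.
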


\subsection{Proof of Theorem \ref{t:new-analytic-all}}
In order to prove
Theorem
\ref{t:new-analytic-all},
we will follow the general framework of \cite[Proof of Theorem A]{SU10}
and apply
Proposition \ref{p:SU-full} to a suitable family of complex
operators, that we now define.

First of all, 
observe that the map $\chi_\lambda\colon \Omega \to \mathbb C$ is holomorphic and 
takes value in the complement of  $\overline{\mathbb D}$.
Hence, $\log |\chi_\lambda|$ is harmonic on $\Omega$, and $|\chi_\lambda|$ has no maximum in $\Omega$. Since the assertion is local, it is enough to prove it on any $\Omega'\Subset N(0)$.
We fix such $\Omega'$ and also a $\lambda_0 \in N(0)$ with $|\chi_{\lambda_0}|> \sup_{\lambda \in \Omega'} |\chi_\lambda|$.
Recall that the holomorphic motion of the Julia sets is $\gamma$-H\"older
continuous on $N(0)$.
We allow ourself to shrink $N(0)$
during the proof, if the estimates are uniform in $\Omega'$ and $\lambda_0$.

For every $y = (z,k)
\in \mathcal T$
and $\lambda\in N(0)$, 
consider the map
$$\psi_y(\lambda) \defeq 
\begin{cases}
1 & z \in U_{k+1}\times\{k\}\\
\hat \psi_y (\lambda)
\defeq
\frac{(f^k_\lambda)' \circ h_\lambda(z)}{(f^k_{\lambda_0})'\circ h_{\lambda_0}(z)} 
& z \notin \overline{U_{k+1}}\times \{k\}
\end{cases}$$
and observe that we have 
$|\psi_y(\lambda)|= 
|R_\lambda (y)/R_{\lambda_0} (y)|$ for every $y \in \mathcal T$.
It follows from 
the choice of $\lambda_0$
and Lemma \ref{l:L1MS}
that,
shrinking $N(0)$
if necessary, for all $y \in \mathcal T$ 
and $\lambda\in \Omega'$, 
we have
$$|\psi_y(\lambda)-1|<1/5.$$
Working in chart, we can also assume that $\Omega'=
D_\ell (0,R_0)$ for some $R_0>0$ and $\ell \defeq {\rm dim}_{\mathbb C}\Omega$.
Then,
for every
$k\in \mathbb N$ and 
$y=(z,k) \in 
U_k\times \{k\}$,
there exists a branch of $\log \hat \psi_y$ on $\Omega'= D_\ell (0,R_0)$
sending 
$0$ to $0$ and whose modulus is bounded by $1/4$.
Applying Lemma \ref{lem_6.3} to the complex analytic function
$\log \hat \psi_y$, 
we see that the real analytic function
 $\Re\log \hat \psi_y \colon D_\ell (0,R_0) \to \mathbb{R}$ has an analytic extension $\Re \widetilde{\log \hat \psi_y} \colon D_{2\ell} (0,R') \to \mathbb{R}$ for some $R' \in (0,R_0)$
 which \emph{does not depend on $k$}
 and whose modulus is bounded by $4^{\ell-1}$. 
Recall that $D_\ell (0,R_0)$ is seen
as a subset of the points
of  
$\mathbb C^{2\ell}$
with real coordinates
by means of the immersion $\iota_\ell$ as in \eqref{eq_iota}, and that we have
$\iota_\ell (0)=0\in \mathbb C^{2\ell}$.

\medskip

For $(t,\lambda) 
\in
\mathbb{C} 
\times  D_{2\ell}(0,R')$,
define $\zeta_{(t,\lambda)}\colon
\mathcal T \to \mathbb C$ by

$$\zeta_{(t,\lambda)}(y)
\defeq
\begin{cases}
-t\log\chi_* &
z \in U_{k+1}\times\{k\}\\
-t\Re \widetilde{\log \hat \psi_y}(\lambda) 
-t\log(\chi_*^{1-k}|(f^k_{\lambda_0})' (h_{\lambda_0}(z))|)
& z \notin \overline{U_{k+1}}\times\{k\}.
\end{cases}$$

Observe
that we have
$e^{\zeta_{(t,\lambda)}}=
R_\lambda^{-t}$ 
for
every
$(t,\lambda) \in \mathbb{R} \times D_\ell (0,R')$.

\medskip

We want to apply Proposition \ref{p:SU-full} 
with $w=(t_1,\lambda_1, t_2, \lambda_2)$
and to
the family of operators 
$\mathcal L_{(t_1,\lambda_1, t_2, \lambda_2)} 
= \mathcal L_{\zeta_{(t_1,\lambda_2)}+
\zeta_{(t_2, \lambda_2)}}$.
For each $x \in \mathcal{T}$, the map $(t_1,\lambda_1, t_2, \lambda_2)
\mapsto \big(
\zeta_{(t_1,\lambda_1)}
+
\zeta_{(t_2,\lambda_2)}\big) (x)$ is holomorphic by the definition of $\zeta_{(t,\lambda)}$.
Moreover, $\mathcal{L}_{(\delta(0),0,0,0)}$ is a bounded operator on $C^\kappa(\mathcal{T},\mathbb{C})$ by Lemma \ref{l:cont-L}. 
The following lemma gives the second condition in Proposition \ref{p:SU-full}. The third condition can be verified by
a
similar 
(and simpler) computation, as only one inverse branch is involved.

\begin{lem} \label{lem_314}
Up to shrinking $R'$, the family of functions
\[D_1(\delta(0),R')
\times  D_{2\ell}(0,R')
\times
D_1(0,R')
\times  D_{2\ell}(0,R')\ni (t_1,\lambda_1,t_2,\lambda_2) \mapsto 
\sum_{\hat T}
e^{\zeta_{(t_1,\lambda_1)}+
\zeta_{(t_2, \lambda_2)}} \circ \hat{T}\]
is uniformly continuous with respect to the 
H\"older norm $\|\cdot\|_{\kappa}$ on
$U_1 \times \{1\}$.
\end{lem}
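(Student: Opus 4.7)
The plan is to prove the lemma by combining two ingredients: (i) continuity in the parameters of each individual summand $e^{\zeta_{(t_1,\lambda_1)}+\zeta_{(t_2,\lambda_2)}} \circ \hat T$ with values in $C^{\kappa}(U_1 \times \{1\})$, and (ii) a geometric majorant for the series that is uniform in the parameters. Together these give continuity of the sum as a map from the polydisk into $C^{\kappa}(U_1 \times \{1\})$; uniform continuity then follows by restricting to a compact sub-polydisk, which is enough since the statement will be applied at a fixed base parameter in the construction of \S\ref{ss:extension-analytic}.

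For the enumeration of inverse branches, I use the combinatorial picture of Examples \ref{ex:deg3} and \ref{ex:deg3-preim}: the preimages under $T$ of a point of $U_1 \times \{1\}$ all lie in the descent part of each floor (i.e.\ outside $U_{k+1} \times \{k\}$), with a uniformly bounded number on each floor $k \geq 1$; on $\mathcal T_k$ for $k \geq 2$ there are precisely two such branches, since $F^k \colon D_k \to U_0 \setminus V_k$ has degree $2$ and $U_1 \subset U_0 \setminus V_k$. For a branch $\hat T$ landing on floor $k$, the explicit form of $\zeta_{(t,\lambda)}$ outside the vertical region, the bound $|\Re\widetilde{\log \hat\psi_y}| \leq 4^{\ell-1}$ from Lemma \ref{lem_6.3}, and the estimate $|(f^k_{\lambda_0})' \circ h_{\lambda_0}| \sim \hat\chi^{k/2}$ on $D_k$ from Lemma \ref{l:L1MS} should yield the sup-norm bound
\[
\|e^{\zeta_{(t_1,\lambda_1)}+\zeta_{(t_2,\lambda_2)}} \circ \hat T\|_\infty \;\lesssim\; (\chi_*/\sqrt{\hat\chi})^{k \Re(t_1+t_2)},
\]
uniformly in $(t_1,\lambda_1,t_2,\lambda_2)$ in the polydisk, provided $R'$ is taken small enough that $\Re(t_1+t_2) \geq \delta(0) - 2R' > 0$. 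Upgrading to the Hölder seminorm requires the observation that $\hat T$ is Lipschitz from $(U_1 \times \{1\}, d_{\mathcal T})$ into $(\mathcal T_k, d_{\mathcal T})$ with constant $\lesssim \chi_*^{k-1} \hat\chi^{-k/2}$ — the tower-dilation factor $\chi_*^{k-1}$ being exactly counteracted by the Euclidean contraction of the inverse branch — combined with control of the $\kappa$-Hölder seminorm of the $\log$-derivative term in $\zeta_{(t,\lambda)}$ via Lemma \ref{l:L2MS}. Since $\chi_*/\sqrt{\hat\chi} < 1$, summing over the (finitely many) branches on each floor and then over $k$ produces a convergent geometric majorant.

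For termwise continuity, I use that for fixed $\hat T$ and fixed $y$ in its image, the map $(t,\lambda) \mapsto \zeta_{(t,\lambda)}(y)$ is holomorphic by the construction of $\widetilde{\log \hat\psi_y}$ in Lemma \ref{lem_6.3}, and that the $y$-regularity of the relevant quantities (inherited from the Hölder continuity of $h_{\lambda_0}$ and the smoothness of $F^k$ away from the critical point) is preserved by that construction; this yields continuity of each summand as a map into $C^{\kappa}(U_1 \times \{1\})$. Combined with the uniform majorant, this gives continuity of the sum, completing the argument. The main technical obstacle will be the Hölder-seminorm estimate on each summand: one must verify that the dilation of $d_{\mathcal T}$ on high floors, the Euclidean contraction of $\hat T$, and the distortion bound of Lemma \ref{l:L2MS} combine so that the decay rate $(\chi_*/\sqrt{\hat\chi})^{k\Re(t_1+t_2)}$ is not spoiled in the Hölder norm. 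This is the same balancing act already performed in Lemma \ref{l:Hlambda-logq} to establish the Hölder continuity of $H_\lambda$ on the full tower, and it can be handled by choosing $\kappa$ small enough, exactly as there.
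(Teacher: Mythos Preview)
Your approach is sound and reaches the conclusion, but it is organized differently from the paper's proof. The paper does not pass through a Weierstrass $M$-test argument; instead, it estimates the \emph{difference} of the full sums directly and proves an explicit Lipschitz bound in the parameters, split into two auxiliary lemmas (Lemmas \ref{l:314a} and \ref{l:314b}) handling the $\|\cdot\|_\infty$ and $\|\cdot\|'_\kappa$ parts of the H\"older norm separately. In each of those lemmas the summability over floors is obtained from the same exponential factor $e^{-\Re t_1 \log(\chi_*^{1-k}|(f^k_{\lambda_0})'(h_{\lambda_0}(\hat T(w)))|)}$ that you isolate, while the smallness in the parameters comes from a single application of Cauchy's formula to the uniformly bounded holomorphic families $\lambda \mapsto \Re\widetilde{\log\hat\psi_{\hat T(w)}}(\lambda)$.

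Your route---termwise continuity of each $e^{\zeta_{(t_1,\lambda_1)}+\zeta_{(t_2,\lambda_2)}}\circ\hat T$ in $C^\kappa$, a uniform geometric majorant $\lesssim (\chi_*/\sqrt{\hat\chi})^{k\Re(t_1+t_2)}$ in the $C^\kappa$-norm, and then compactness of a sub-polydisk---is legitimate and arguably more modular. Two small remarks: first, the Lipschitz constant of $\hat T$ into floor $k$ is controlled by $\chi^{-k/2}$ (the multiplier at the base parameter $0$), which is only bounded above by $\hat\chi^{-k/2}$, though this is the direction you need; second, the step ``pointwise holomorphicity plus $y$-regularity $\Rightarrow$ continuity into $C^\kappa$'' still requires a Cauchy-estimate argument of the same flavor as the paper's Lemma \ref{l:314b}, so you are not avoiding that computation, only localizing it to a single branch. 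The paper's approach has the mild advantage of yielding a quantitative Lipschitz constant in the parameters rather than mere uniform continuity, though only the latter is needed for Proposition \ref{p:SU-full}.
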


As we will only work on
$U_1 \times \{1\}$,
we will drop the dependence of the norm on this space.
Lemma \ref{lem_314} 
will follow from the following
two 
lemmas,
giving 
the continuity with respect to the  $\|\cdot\|_\infty$-norm and the $\|\cdot\|_\kappa'$-norm, respectively.
In all the statements and the proofs 
below, whenever an inverse branch $\hat T$
of $T$ is given, we will denote by $k=k(\hat T)$
the floor which contains the
image of $\hat T$,
which is equal to the 
 iteration exponent
as in the definition of $T$.

\begin{lem}\label{l:314a}
Up to shrinking $R'$,
there exists $L_1 > 0$
such that for any $\lambda_1,\lambda_2,\lambda'_1, \lambda'_2 
\in D_{2\ell}(0,R')$,
$t_1,t'_1 \in D_1(\delta(0),R')$, and 
$t_2,t'_2\in D_1(0,R')$
we have
$$\Big\|\sum_{\hat T} e^{
\zeta_{(t'_1,\lambda'_1)}
+
\zeta_{(t'_2,\lambda'_2)}
}\circ \hat{T} - \sum_{\hat T} e^{\zeta_{(t_1,\lambda_1)}
+
\zeta_{(t_2,\lambda_2)}
}\circ \hat{T}\Big\|_\infty \le L_1\|(t'_1,\lambda'_1, t'_2, \lambda'_2)
-(t_1,\lambda_1, t_2, \lambda_2)\|.$$
\end{lem}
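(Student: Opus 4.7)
The plan is to estimate the sum term by term, balancing a linear-in-$k$ Lipschitz bound on the exponent against geometric-in-$k$ decay of the weights. First I would enumerate the inverse branches $\hat T$ of $T$ on $U_1 \times \{1\}$ by the floor $k$ on which $y = \hat T(x)$ lies. Since $T(y) = x$ lies on floor~$1$ (not on floor $k+1$), we always have $y \notin U_{k+1} \times \{k\}$, so the ``outside'' case of the definition of $\zeta_{(t,\lambda)}$ applies at every inverse branch:
\[
\zeta_{(t,\lambda)}(y) \;=\; -t\,\Re \widetilde{\log \hat \psi_y}(\lambda) \;-\; t\,B_{k,\hat T},
\qquad B_{k,\hat T} \defeq \log\bigl(\chi_*^{1-k}\,|(f^k_{\lambda_0})'(h_{\lambda_0}(\pi y))|\bigr).
\]
Moreover, the number of inverse branches at each floor is uniformly bounded by some $N = N(D)$, and the implicit constants below will all be uniform in $x, \hat T, k$ and in $w = (t_1,\lambda_1,t_2,\lambda_2)$ on a slightly shrunken polydisk.

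The two key estimates to establish are: \emph{(i)} a Lipschitz bound on the exponent,
\[
|\zeta_{w'}(y) - \zeta_w(y)| \;\lesssim\; k\,\|w' - w\|,
\]
which follows from the modulus bound $|\Re \widetilde{\log \hat \psi_y}(\lambda)| \le 4^{\ell-1}$ of Lemma~\ref{lem_6.3}, a Cauchy estimate (after shrinking $R'$) for the derivative in $\lambda$, and $|B_{k,\hat T}| \lesssim k$ coming from Lemma~\ref{l:L1MS}; and \emph{(ii)} a geometric bound on the weights,
\[
|e^{\zeta_{(t_1,\lambda_1)}(y) + \zeta_{(t_2,\lambda_2)}(y)}| \;\le\; C\, e^{-c'(\Re t_1 + \Re t_2)\,k},
\]
obtained from $B_{k,\hat T} \ge c' k - O(1)$ for some $c' > 0$ (a consequence of Lemma~\ref{l:L1MS} together with the choice $\chi_* < \sqrt{\hat\chi}$ in Section~\ref{sec_3.2_tower}), combined with the uniform bound on the $A$-term. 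Next I would shrink $R'$ so that $\Re(t_1) + \Re(t_2) \ge \delta(0)/2 > 0$ uniformly on the polydisk, making the decay rate $\rho \defeq e^{-c'\delta(0)/2} < 1$ uniform in $w$.

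Combining \emph{(i)} and \emph{(ii)} via the standard inequality $|e^a - e^b| \le |a-b|\,\max(|e^a|, |e^b|)$ yields, at each inverse branch at floor $k$,
\[
|e^{\zeta_{w'}(y)} - e^{\zeta_w(y)}| \;\lesssim\; k\, \rho^k\, \|w' - w\|,
\]
and summing over the at most $N$ branches at each floor and then over $k \ge 1$, using that $\sum_{k\ge 1} k\,\rho^k < \infty$, gives the Lipschitz estimate $\bigl\| \sum_{\hat T} (e^{\zeta_{w'}} - e^{\zeta_w})\circ \hat T \bigr\|_{\infty} \lesssim \|w' - w\|$ uniformly in $x \in U_1 \times \{1\}$, as required. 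The main obstacle is reconciling the $O(k)$ growth of the Lipschitz constant in \emph{(i)} with the need for a summable bound; this is precisely what the geometric factor $\rho^k$ in \emph{(ii)} resolves, and its availability rests on the tower normalization $\chi_* < \sqrt{\hat\chi}$ (so that $c' > 0$) together with the freedom to shrink $R'$ so that $\Re(t_1 + t_2)$ stays uniformly positive as $(t_1,t_2)$ ranges over the complex polydisk.
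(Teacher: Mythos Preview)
Your proposal is correct and follows essentially the same approach as the paper's proof: both exploit that the exponent $\zeta_w(y)$ has a Lipschitz constant growing at most linearly in the floor index $k$ (via the Cauchy estimate on $\Re\widetilde{\log\hat\psi_y}$ and the bound $|B_{k,\hat T}|\lesssim k$), while the weight $|e^{\zeta_w(y)}|$ decays geometrically in $k$ thanks to $\chi_*<\sqrt{\hat\chi}$ and $\Re(t_1+t_2)>0$, so that the sum $\sum_k k\rho^k$ converges. The only organizational difference is that the paper separates the variation into one coordinate at a time (first $\lambda_1$ at fixed $t_1,t_2,\lambda_2$, then $t_1$ at fixed $\lambda_1,t_2,\lambda_2$, then appeals to the symmetry in $(t_1,\lambda_1)\leftrightarrow(t_2,\lambda_2)$ with Remark~\ref{r:t1t2} handling the asymmetric ranges), whereas you bound all four variables simultaneously via $|e^a-e^b|\le |a-b|\max(|e^a|,|e^b|)$; your packaging is slightly more economical but the substance is identical.
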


\begin{proof}
We will prove a uniform continuity estimate first at fixed $t_1, t_2, \lambda_2$ and then at fixed $\lambda_1, t_2, \lambda_2$.
As the problem is essentially
symmetric
in $(t_1, \lambda_1)$ and $(t_2, \lambda_2)$
(apart from the ranges of $t_1$ and $t_2$,
see Remark \ref{r:t1t2} below for this issue),
this will give the desired continuity. In particular, in all this proof $t_2$ and $\lambda_2$ will be fixed.

\medskip

First, let us fix $t_1\in D_1 (\delta(0), R')$.
For every $\lambda_1, \lambda'_1\in D_{2\ell} (0,R')$, we have
\begin{align*}
&\Big| \sum_{\hat T} e^{\zeta_{(t_1,\lambda'_1)}+
\zeta_{(t_2,\lambda_2)}} \circ \hat{T} (w)-
\sum_{\hat T} e^{\zeta_{(t_1,\lambda_1)}+
\zeta_{(t_2,\lambda_2)}} \circ \hat{T} (w)\Big| \\
&\le \sum_{\hat T} 
\big(e^{\zeta_{(t_2,\lambda_2)}}\circ \hat T(w)\big)
e^{- \Re t_1 \cdot \log(\chi_*^{1-k}|(f^k_{\lambda_0})' (h_{\lambda_0}(\hat{T}(w)))|)} \cdot 
\Big|e^{-  t_1 \cdot \Re \widetilde{\log \hat \psi_{\hat{T}(w)}}(\lambda'_1)}  -e^{-  t_1 \cdot \Re \widetilde{\log \hat \psi_{\hat{T}(w)}}(\lambda_1)}  \Big|\\
& \lesssim
 \sum_{\hat T}
\big(e^{\zeta_{(t_2,\lambda_2)}}\circ \hat T(w)\big)
  e^{- \Re t_1 \cdot \log(\chi_*^{1-k}|(f^k_{\lambda_0})' (h_{\lambda_0}(\hat{T}(w)))|)}
\cdot 
\Big| \Re \widetilde{\log \hat \psi_{\hat{T}(w)}}(\lambda'_1)-  \Re \widetilde{\log \hat \psi_{\hat{T}(w)}}(\lambda_1)\Big|,
\end{align*}
where the implicit constant is independent of $t_1,\lambda_1, \lambda'_1$.
Since $|\Re \widetilde{\log \hat{\psi}_{\hat{T} (w)}}(\lambda)| \le 4^{\ell-1}$ 
\emph{for every inverse branch} $\hat T$
and every $\lambda$,
by Cauchy's formula
we have
$$|\Re \widetilde{\log \hat{\psi}_{\hat{T}
 (w)}}(\lambda_1) -\Re \widetilde{\log \hat{\psi}_{\hat{T}
 (w)}}(\lambda'_1)| \lesssim
|\lambda_1-\lambda'_1|,$$
where the implicit constant is independent of
$\lambda_1,\lambda'_1$ and of the branch $\hat T$ (and of $t_1$).
As $\Re t_1>0$, the term 
$e^{- \Re t_1 \cdot \log(\chi_*^{1-k}|(f^k_{\lambda_0})' (h_{\lambda_0}(\hat{T}(w)))|)} $ is 
exponentially small
for $k\to \infty$,
by the  choice of 
$\lambda_0$.
For similar reasons, the term 
$e^{\zeta_{(t_2,\lambda_2)}}\circ \hat T(w)$ is also uniformly bounded.
Therefore, we have
\begin{equation}\label{eq_3.123}
\Big\|\sum_{\hat T} e^{\zeta_{(t_1,\lambda'_1)}
+
\zeta_{(t_2,\lambda_2)}
} \circ \hat{T} - \sum_{\hat T} e^{\zeta_{(t_1,\lambda_1)}
+
\zeta_{(t_2,\lambda_2)}
} \circ \hat{T} \Big\|_\infty  \lesssim |\lambda_1'-\lambda_1|, 
\end{equation}
where the implicit constant is independent of $t_1,\lambda_1,\lambda'_1$
(and of $t_2, \lambda_2)$, as desired.

\medskip

We now fix $\lambda_1$ and consider the variation in $t_1$.
For any $\lambda_1 \in D_{2\ell}(0,R')$ and $t_1,t'_1 \in  D_1(\delta(0),R')$, 
we have
(in the first step we ignore the term
$e^{\zeta_{(t_2,\lambda_2)}}\circ \hat T(w)$ as it is uniformly bounded, as above)

\begin{align*}
& \Big|
\sum_{\hat T} e^{\zeta_{(t_1,\lambda_1)}
+
\zeta_{(t_2, \lambda_2)}} \circ \hat{T} (z)  - \sum_{\hat T} e^{\zeta_{(t'_1,\lambda_1)}
+
\zeta_{(t_2, \lambda_2)}}\circ \hat{T} (z)\Big| 
\\
& \lesssim
\sum_{\hat T} 
 e^{-\Re t_1  \Re \widetilde{\log \hat{\psi}_{\hat{T} 
(z)}}(\lambda_1)}
\cdot 
e^{- 
 \Re
t_1 \log(\chi_*^{1-k}|(f^k_{\lambda_0})' (h_{\lambda_0}(\hat{T} 
 (z)))|)} \\[-0.3cm]
& \quad \quad
 \quad \quad
  \quad \quad
   \quad \quad
    \quad \quad
     \quad \quad
     \cdot
\big(1- e^{
 (t_1 -t'_1)(\Re \widetilde{\log \hat{\psi}_{\hat{T} 
  (z)}}(\lambda_1)-  \log(\chi_*^{1-k}|(f^k_{\lambda_0})' (h_{\lambda_0}(\hat{T} 
  (z)))|))
}\big)\\[0.1cm]
& \lesssim
|t_1-t'_1|
\sum_{\hat T} 
e^{- \Re t_1 \log(\chi_*^{1-k}|(f^k_{\lambda_0})' (h_{\lambda_0}(\hat{T} 
 (z)))|)} 
\big|\Re \widetilde{\log \hat{\psi}_{\hat{T} 
 (z)}}(\lambda_1)-
\log(\chi_*^{1-k}|(f^k_{\lambda_0})' (h_{\lambda_0}(\hat{T} 
(z)))|)\big|,
\end{align*}
where in the last step we bounded the factor 
$e^{- \Re t_1 \Re \widetilde{\log \hat{\psi}_{\hat{T} 
(z)}}(\lambda_1)}$
as in the previous part of the proof. 
To conclude, denoting 
$a_k:=\log(\chi_*^{1-k}|(f^k_{\lambda_0})' (h_{\lambda_0}(\hat{T} 
 (z)))|)$
and observing that $a_k\to \infty$ for $k\to \infty$,
we just need to bound the expression
\[
\sum_{\hat T} 
e^{-\Re  t_1 a_k} 
\big|\Re \widetilde{\log \hat{\psi}_{\hat{T} 
 (z)}}(\lambda_1)-
a_k\big|. 
\]
As the  term
$\Re \widetilde{\log \hat{\psi}_{\hat{T} 
(z)}}(\lambda_1)$ is bounded and $t_1$ belongs to a small neighborhood of $\delta(0)$,
the expression in bounded and the desired bound 
\begin{equation}\label{eq_3.124}
\Big\|\sum_{\hat T} e^{\zeta_{(t_1,\lambda_1)}
+
\zeta_{(t_2,\lambda_2)}} \circ \hat{T} - \sum_{\hat T} e^{\zeta_{(t'_1,\lambda_1)}
+
\zeta_{(t_2,\lambda_2)}
} \circ \hat{T} \Big\|_\infty \lesssim |t_1-t'_1|,
\end{equation}
follows,
where
the implicit constant is independent of $t_1,t'_1,\lambda_1$
(and of $t_2,\lambda_2$).
The assertion now follows from \eqref{eq_3.123} and \eqref{eq_3.124}.
\end{proof}

\begin{rmk}\label{r:t1t2}
In the proof of Lemma \ref{l:314a}, when working with $t_2$ instead of $t_1$, we do not have an exponential bound for
$e^{- \Re t_2 \log(\chi_*^{1-k}|(f^k_{\lambda_0})' (h_{\lambda_0}(\hat{T} 
 (z)))|)}$
as $t_2$ may be equal to $0$.
On the other hand, the term 
$e^{- \Re t_1 \log(\chi_*^{1-k}|(f^k_{\lambda_0})' (h_{\lambda_0}(\hat{T} 
 (z)))|)}$
is still present (and exponentially small), and can be used to bound the expressions
in the same way as above.
\end{rmk}

\begin{lem}\label{l:314b}
Up to shrinking $R'$,
there exists a constant $L_2 \ge 1$ such that for any $\lambda_1,\lambda_2,\lambda'_1, \lambda'_2
\in D_{2\ell}(0,R')$,
$t_1,t'_1 \in D_1(  \delta(0),R')$,
 and 
 $t_2,t'_2 \in D_1(0,R')$,
we have
$$\Big\|\sum_{\hat T} e^{\zeta_{(t_1,\lambda_1)}
+
\zeta_{(t_2,\lambda_2)}
} \circ \hat{T}- \sum_{\hat T} e^{\zeta_{(t'_1,\lambda'_1)}+
\zeta_{(t'_2,\lambda'_2)}} \circ \hat{T}\Big\|_\kappa' \le L_2||(t'_1,\lambda'_1, t_2, \lambda'_2)
-(t_1,\lambda_1, t_2, \lambda_2)||.$$
\end{lem}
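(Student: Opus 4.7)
The proof will parallel Lemma \ref{l:314a}, now controlling the H\"older semi-norm $\|\cdot\|'_\kappa$ instead of the sup-norm. By the triangle inequality and the essential symmetry between $(t_1,\lambda_1)$ and $(t_2,\lambda_2)$ (with Remark \ref{r:t1t2} handled as before, by always retaining the exponentially decaying factor $e^{-\Re t_1 a_k}$ coming from the $t_1$-variable, which lies near $\delta(0)>0$), it suffices to estimate the H\"older semi-norm when only $(t_1,\lambda_1)$ varies at fixed $(t_2,\lambda_2)$.

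Fix $w,w'\in U_1\times\{1\}$ and, for each inverse branch $\hat T$ sending the first floor to floor $k=k(\hat T)$, set $y=\hat T(w)$ and $y'=\hat T(w')$. The plan is to apply the product-splitting identity $AB-A'B'=(A-A')B+A'(B-B')$ with $A=(e^{\zeta_{(t_1,\lambda_1)}}-e^{\zeta_{(t'_1,\lambda'_1)}})(y)$, $A'=(e^{\zeta_{(t_1,\lambda_1)}}-e^{\zeta_{(t'_1,\lambda'_1)}})(y')$, $B=e^{\zeta_{(t_2,\lambda_2)}(y)}$, and $B'=e^{\zeta_{(t_2,\lambda_2)}(y')}$, and to treat the two resulting pieces separately. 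In each piece, one factor is bounded by a constant times $\|(t_1,\lambda_1)-(t'_1,\lambda'_1)\|\cdot e^{-\Re t_1 a_k}$ via Cauchy's formula applied to the holomorphic map $(t,\lambda)\mapsto e^{\zeta_{(t,\lambda)}(y)}$, exactly as in Lemma \ref{l:314a}; the other factor is bounded by a constant times $e^{-\Re t_j a_k}\cdot d_\mathcal{T}(y,y')^\kappa$ for some $j\in\{1,2\}$, using the $\kappa$-H\"older regularity of $y\mapsto e^{\zeta_{(t,\lambda)}(y)}$ on each floor.

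Two ingredients then combine to give the claim. First, the $\kappa$-H\"older constant of $y\mapsto \zeta_{(t,\lambda)}(y)$ is uniform in $k$ and in $(t,\lambda)$: the $h_\lambda$-dependent factor is controlled by Lemma \ref{l:Hlambda-logq} together with the derivative estimates in Lemma \ref{l:L1MS}, while the parametric factor $\Re\widetilde{\log\hat\psi_y}(\lambda)$ is uniformly bounded by $4^{\ell-1}$ on the polydisk (Lemma \ref{lem_6.3}), so its derivatives in $(t,\lambda)$ are uniformly controlled via a Cauchy estimate on a slightly smaller polydisk. Second, since $T$ is expanding, the inverse branch $\hat T$ contracts: $d_\mathcal{T}(y,y')\le c_k\, d_\mathcal{T}(w,w')$ for some $c_k\to 0$ exponentially. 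Summing over all inverse branches on the first floor then yields a bound of the form $C\,\|(t_1,\lambda_1)-(t'_1,\lambda'_1)\|\cdot d_\mathcal{T}(w,w')^\kappa$, with absolute convergence of the series ensured, as in Lemma \ref{l:cont-L}, by the strict inequality $\chi_*<\sqrt{\hat\chi}$ together with $\Re t_1>\delta(0)-R'>0$.

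The main obstacle is bookkeeping uniform in $k$: one must combine the bounds for the $A$- and $B$-factors so that the resulting series over inverse branches converges to a constant $L_2$ independent of $w,w'$ and of the parameters. This reduces to checking that any polynomial-in-$k$ growth arising from the H\"older constants on the $k$-th floor is dominated by the exponential decay $e^{-\Re t_1 a_k}$ produced by the choice $|\chi_{\lambda_0}|>\sup_{\Omega'}|\chi_\lambda|$, which yields a geometrically convergent series. Once this is verified, the argument for the second parameter pair $(t_2,\lambda_2)$ follows by exactly the same scheme, with the exponential factor from $t_1$ replacing that from $t_2$ whenever $t_2$ is near $0$, as in Remark \ref{r:t1t2}.
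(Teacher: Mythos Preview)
Your overall route is the same as the paper's (bounded distortion, H\"older regularity of the motion, Cauchy's formula in the parameters), but two steps do not go through as written.

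First, in your product splitting $AB-A'B'=(A-A')B+A'(B-B')$, the piece $A'(B-B')$ is indeed handled by ``one factor via Cauchy, the other via spatial H\"older.'' The piece $(A-A')B$ is not: here $B$ is just bounded, while $A-A'=\big(e^{\zeta_1(y)}-e^{\zeta_1'(y)}\big)-\big(e^{\zeta_1(y')}-e^{\zeta_1'(y')}\big)$ is a \emph{mixed} second difference in space and in the parameters. Neither Cauchy at a single point nor spatial H\"older alone bounds it by $\|\text{param diff}\|\cdot d(w,w')^\kappa$. The paper's key step is precisely this: one first proves a spatial H\"older bound \emph{uniform in the parameters} (their inequality for $\Re\widetilde{\log\hat\psi_{\hat T(w)}}(\lambda)-\Re\widetilde{\log\hat\psi_{\hat T(z)}}(\lambda)$), and then applies Cauchy's formula to that H\"older bound, viewed as a bounded holomorphic function of $\lambda$, to get Lipschitz-in-$\lambda$ control of the H\"older seminorm. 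Your ingredients list contains both pieces, but you must explicitly compose them in this order to close the argument for $(A-A')$.

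Second, the claim that ``the $\kappa$-H\"older constant of $y\mapsto\zeta_{(t,\lambda)}(y)$ is uniform in $k$'' is false in the tower metric: it grows like $(\sqrt{\hat\chi}/\chi_*)^{k\kappa}$. What is uniform is the H\"older constant of $w\mapsto\zeta_{(t,\lambda)}(\hat T(w))$ on floor~1, and the clean way to see this is via Lemma~\ref{l:L2MS} (the bounded nonlinearity $|(T^n_\lambda)''|/|(T^n_\lambda)'|^2\le C$), which by the standard distortion-by-image argument gives $|\log|T'_\lambda(a)|-\log|T'_\lambda(b)||\lesssim d_{\mathcal T_\lambda}(T_\lambda a,T_\lambda b)$, hence $\lesssim d(w,w')^\gamma$ after composing with $\hat T$ and $H_\lambda$. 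Citing Lemma~\ref{l:L1MS} plus the contraction of $\hat T$ can be made to work since the exponential growth and the exponential contraction exactly cancel, but your description of the residual growth as ``polynomial-in-$k$'' and of the compensating decay as $e^{-\Re t_1 a_k}$ misidentifies both the rate and the mechanism; the cancellation comes from the expansion of $T$, not from the weight.
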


\begin{proof}
We first observe that, thanks to 
Lemma \ref{l:314a}
and the fact that
$e^{- \Re t_1 \cdot \log(\chi_*^{1-k}|(f^k_{\lambda_0})' (h_{\lambda_0}(\hat{T}(w)))|)} $ is 
exponentially small
for $k\to \infty$
and 
$e^{\zeta_{(t_2,\lambda_2)}}\circ \hat T(w)$ is  
uniformly bounded,
it will be enough to prove a \emph{uniform}
continuity with respect to the $\kappa$-H\"older seminorm $\|\cdot\|'_\kappa$ for all terms
$e^{\zeta_{(t_1,\lambda_1)}} \circ \hat{T}$. Namely, we do not need to control the constants in an exponentially small way, as the summable coefficients are provided by the 
bounds for $\|\cdot\|_\infty$ in 
Lemma \ref{l:314a}.
We will then prove, for every $\hat T$,
the $\kappa$-H\"older continuity of both 
the terms in $\zeta_{(t_1,\lambda_1)}\circ \hat T$. The desired bound then follows computing 
the variation of the function $e^{\zeta_{(t_1,\lambda_1)}\circ \hat T}$, which we can naturally write as the product of two exponentials, one of which 
is uniformly bounded and the other
exponentially small in $k$.

\medskip

As in Lemma \ref{l:314a}
we first consider the variation in $t_1$ at a fixed $\lambda_1$. 
For 
every
inverse branch $\hat T$ of $T$ on
$U_1\times \{1\}$
and
every $z,w\in
U_1\times \{1\}$
we have
\begin{equation}\label{eq_holder1}
\left|\log |(f_{\lambda_0}^k)'(h_{\lambda_0}(\hat{T}
 (w)))|
- \log |(f_{\lambda_0}^k)'(h_{\lambda_0}(\hat{T}
 (z)))|
\right|
\lesssim 
d(z,w)^\gamma,
\end{equation}
where the implicit constant is independent of 
$z,w,\hat T$ and
we recall that $\gamma >\kappa$ is the 
H\"older exponent of 
the holomorphic motion over $N(0)$.
Indeed,
observe that both $\hat T ( z)$ and $\hat T (w)$ belong to
$D_k\times \{k\}$.
For every
$\lambda_3\in N(0)$,
by 
Lemma 
\ref{l:L2MS}
we then have
\[
\left|
\log (f_{\lambda_3}^k)'(h_{\lambda_1}(\hat{T}
(w))) - \log (f_{\lambda_3}^k)'(h_{\lambda_1}(\hat{T}
(z)))\right|
\lesssim \frac{|(f_{\lambda_3}^k)''|}{|(f_{\lambda_3}^k)'|^ 2} C_\gamma
d(z, w)^\gamma
\lesssim
d(z, w)^\gamma,
\]
which gives 
\eqref{eq_holder1}
taking $\lambda_3=\lambda_1$.
Applying the above inequality with $\lambda_3=\lambda_1$ and $\lambda_3=\lambda_0$ and taking the difference, we obtain
\[
|\log \hat{\psi}_{\hat{T}(w)}
(\lambda_1) - 
\log \hat{\psi}_{\hat{T}(z)}
(\lambda_1)| \lesssim 
d(z,w)^\gamma.\]
We deduce from
Lemma \ref{lem_6.3} that we have
\begin{equation}\label{eq_holder2}
|\Re \widetilde{\log \hat{\psi}_{\hat{T}(w)}}(\lambda_1) - \Re \widetilde{\log \hat{\psi}_{\hat{T}(z)}}(\lambda_1)| \lesssim 
4^\ell d(z,w)^\gamma,
\end{equation}
where again the implicit constants 
are independent of
$\hat T,z,w$.
Together,
\eqref{eq_holder1} and \eqref{eq_holder2} give the H\"older continuity in $t_1$
of the two terms in the definition of $\zeta_{(t_1,\lambda_1)} \circ \hat{T}$, as desired.

\medskip

We then consider the variation in
$\lambda_1$ for a fixed $t_1$.
Observe that the second term in the definition of $\zeta_{(t_1,\lambda_1)}$ does not depend on $\lambda_1$, hence we only have to prove the continuity in $\lambda_1$ of its first term, i.e., of
$\Re \widetilde{\log \hat{\psi}_{\hat{T}(z)}}(\lambda_1)$.

Since by \eqref{eq_holder2} 
we  
have
$|\Re \widetilde{\log \hat{\psi}_{\hat{T}
(w)}}(\lambda_1) - \Re \widetilde{\log \hat{\psi}_{\hat{T}
(z)}}(\lambda_1)| \le 4^\ell\cdot 
d(z,w)^\kappa$, 
up to shrinking $R'$,
by Cauchy's formula, 
we have
$$|\Re \widetilde{\log \hat{\psi}_{\hat{T}
(w)}}(\lambda_1) - \Re \widetilde{\log \hat{\psi}_{\hat{T}
(z)}}(\lambda_1)-\Re \widetilde{\log \hat{\psi}_{\hat{T}
(w)}}(\lambda'_1) + \Re \widetilde{\log \hat{\psi}_{\hat{T}
(z)}}(\lambda'_1)| \lesssim
|\lambda_1-\lambda'_1|
\cdot
d(z,w)^\kappa
$$
for any $\lambda_1,\lambda'_1 \in D_{2\ell}(0,R')$. This gives the desired continuity and completes the proof.
\end{proof}

Lemma \ref{lem_314} is then a
consequence of Lemmas \ref{l:314a} and \ref{l:314b}.
We can now  complete the proof of
Theorem 
\ref{t:new-analytic-all}.

\begin{proof}[Proof of 
Theorem
\ref{t:new-analytic-all}]
By the above, and in particular by 
Lemma \ref{lem_314}, up to shrinking $R'$
the family of operators 
$\mathcal L_{(t_1,\lambda_1, t_2, \lambda_2)} 
= \mathcal L_{\zeta_{(t_1,\lambda_1, t_2, \lambda_2)}}$
verifies
all 
the assumptions of Proposition \ref{p:SU-full}. Therefore, by  that proposition,
the map 
$D_\ell (\delta (0) , R') \times  D_{2\ell} (0,R')
\times D_1 (0,R')\times D_{2\ell} (0,R')
\ni 
(t_1,\lambda_1, t_1, \lambda_2)
\mapsto \mathcal{L}_{(t_1,\lambda_1, t_2, \lambda_2)}\in 
C^{\kappa} (\mathcal T)$
is holomorphic. 
Thanks to Proposition \ref{p:MS-gen}, the assertion is now 
a consequence of Kato-Rellich perturbation theorem.
\end{proof}

\section{The Hessian form $\langle \cdot, \cdot \rangle_G$ and the pressure form $\langle \cdot, \cdot \rangle_{\mathcal{P}}$} \label{sec_4}
Let $\Omega$ be a $\Lambda$-hyperbolic component of a Misurewicz family $\Lambda \subset {\rm rat}_D^{\rm cm}$.
In this section, we construct the Hessian form $\langle \cdot, \cdot \rangle_G$ and the pressure form $\langle \cdot, \cdot \rangle_{\mathcal{P}}$. Thanks to
Corollary \ref{c:delta-ana}, the construction of the Hessian form $\langle \cdot, \cdot \rangle_G$ and the pressure form $\langle \cdot, \cdot \rangle_{\mathcal{P}}$ can be obtained using the same arguments as in \cite{HeNie23,BH24}. Here we give their definitions and the main steps of their constructions for completeness.
For simplicity, we will work with a lift of $\Omega$ to ${\rm Rat}_D^{\rm cm}$ and construct a metric there. It follows from standard arguments that this metric indeed descends to the quotient (and is degenerate on vectors tangent to the fibers); see \cite{HeNie23}.

\medskip

Fix $\lambda_0 \in \Omega$. Let $U(\lambda_0)$ be a neighborhood of $\lambda_0$
such that the conjugacy $h_\lambda \colon J_{\lambda_0} \to J_\lambda$
is well-defined for any $f_\lambda \in U(\lambda_0)$. 
We denote by 
$\nu= \nu_{\lambda_0}$
the unique
equilibrium state on $J_{\lambda_0}$ for the potential
$-\delta
(\lambda_0)\log |f_{\lambda_0}'|$. We note that the existence and uniqueness of $\nu$ is guaranteed by 
\cite{MakSmir03}, see also
Proposition \ref{p:MS-gen} and 
\cite{PRLS03}.

Consider the function 
$\mathrm{Ly}_{\lambda_0} \colon U(\lambda_0)\to \bbR$
given by
$$\mathrm{Ly}_{\lambda_0}(\lambda) \defeq  \int_{J_\lambda} \log|f'_\lambda| d\left((h_\lambda)_* \nu\right)=\int_{J_{\lambda_0}} \log|f'_\lambda \circ h_\lambda|d\nu.$$
We observe that, as the critical points 
in the Julia set
are
preserved by the holomorphic motion,
for every $z\in J_{\lambda_0}$
the holomorphic
function
$\lambda \mapsto f'_\lambda \circ h_\lambda (z)$
is either never zero or constantly equal to zero. Hence,
the same arguments as 
those in the proof of 
\cite[Lemma 4.2]{BH24} give that 
 $\mathrm{Ly}_{\lambda_0}$
is harmonic, hence 
in particular
real-analytic.

As a consequence of the analyticity of both $\mathrm{Ly}_{\lambda_0}$
and $\delta$ (by 
Corollary \ref{c:delta-ana}),
we see that also the function
 $G_{\lambda_0} \colon U(\lambda_0) \to \bbR$ defined as
$$G_{\lambda_0}(\lambda) \defeq \delta(\lambda)\mathrm{Ly}_{\lambda_0}(\lambda) = \delta(\lambda) \int_{J_{\lambda_0}} \log|f'_\lambda \circ h_\lambda|d\nu$$
is 
real-analytic. 
The same proof as
that of  \cite[Proposition 4.5]{BH24} gives that
$G_{\lambda_0}$ has a minimum at $\lambda_0$.
Therefore we have $DG_{\lambda_0}(\lambda_0)=0$ and the Hessian $G_{\lambda_0}''(\lambda_0) \colon T_{\lambda_0}\Omega \times T_{\lambda_0}\Omega \to \mathbb R$ is well-defined at $\lambda_0$
and it gives
a positive semi-definite symmetric bilinear form $\langle \cdot, \cdot \rangle_G$ on the tangent space $T_{\lambda_0}\Omega$. Namely,
we can set
$$\langle \vec{u},\vec{v} \rangle_G \defeq (G_{\lambda_0}''(\lambda_0))(\vec{u},\vec{v})
\quad
\mbox{ for every}
\quad
\vec{u},\vec{v} \in T_{\lambda_0}\Omega.$$
For every $\vec{v}
\in T_{\lambda_0}\Omega$,
we will also denote
$\|\vec{v}\|_G \defeq \sqrt{\langle \vec{v}, \vec{v}\rangle_G}$.
A direct computation shows that
if 
 $\gamma(t), t \in (-1, 1)$
 is a smooth
path in $U(0)$ with $\gamma(0) = \lambda_0$ and $\gamma'(0)=\vec{v} \in T_{\lambda_0}\Omega$, we have 
$$||\vec{w}||_G^2 = \left.\frac{d^2}{dt^2} \right|_{t=0} G_{\lambda_0}(\gamma(t)).$$
This characterization is used to prove that the metric, although defined in 
${\rm Rat}_D^{\rm cm}$,
indeed descends to the quotient in ${\rm rat}_D^{\rm cm}$; see for instance \cite[Section 4.2]{BH24}.

\bigskip

Now we recall the construction of the pressure form $\langle \cdot, \cdot \rangle_{\mathcal{P}}$. Recall that  $\delta(\lambda)$ is the unique real number 
satisfying $p(\delta(\lambda),\lambda) = 0.$ Recall that the pressure function $\mathcal{P}(\phi)$ is defined as $\mathcal{P}(\phi) \defeq \sup_{\mu} h_\mu(f_{\lambda_0}) + \int_{J_0}\phi d\mu$ where the supremum is taken over all the $f_{\lambda_0}$-invariant probability measures on $J_0$. By \cite{PRLS04}, the definition of $\mathcal{P}(-t\log|f_\lambda'|)$ coincides with $p(t,\lambda)$ as in \eqref{def_pressure}.

Let $\mathcal{C}
(J_{\lambda_0})$ be the set of cohomology classes of H\"older continuous functions with pressure $0$
with respect to $f_{\lambda_0}$,
that is,
$$\mathcal{C}
(J_{\lambda_0}) \defeq \{\phi: \phi \in C^{\alpha}(J_{\lambda_0},\mathbb{R}) \text{ for some } \alpha>0, \mathcal{P}(\phi) = 
0\}/ \sim$$
where $\phi_1 \sim \phi_2$ if $\phi_1$ and  $\phi_2$ are $C^0$-cohomologous
on $J_{\lambda_0}$, i.e., there exists a continuous function $g \colon J_{\lambda_0} \to \mathbb R$ such that $\phi_1 - \phi_2 = g - g \circ f_{\lambda_0}$.
For every $\lambda \in \Omega$, we denote by $\mathscr{E}(\lambda)$ the  class
$[-\delta(\lambda)\log|f'_\lambda \circ h_\lambda|]\in \mathcal C (J_{\lambda_0})$
and by
 $\nu_\lambda$
the unique 
equilibrium state for
any representative $\phi$ of 
$\mathscr E (\lambda)$, whose existence and uniqueness is guaranteed by Proposition \ref{p:MS-gen}. 
It is a standard fact 
(see for instance \cite[p. 375]{McMullen08})
that
the tangent space 
of $\mathcal{C}
(J_{\lambda_0})$ at $\mathscr{E}(\lambda)$ can be identified with 
$$T_{\mathscr{E}(\lambda)}\mathcal{C}(J_{\lambda_0}) = \left\{\psi 
\colon \psi
\in C^{\alpha}(J_{\lambda_0},\mathbb{R}) \text{ for some } \alpha>0,
\int_{J_{\lambda_0}} \psi d\nu_\lambda = 0  \right\} / \sim,$$
where we used the fact
that, by definition, the pressure is constant on $\mathcal C(J_{\lambda_0})$.
Following \cite[p. 375]{McMullen08}, we define the {\it pressure form $||\cdot||_{pm}$ on $\mathscr{E}(\Omega) \subset \mathcal{C}(J_{\lambda_0})$}
as
$$||[\psi]||^2_{pm} \defeq \frac{{\rm Var}(\psi,\nu_\lambda)}{\int_{J_{\lambda_0}} \phi d\nu_\lambda}
\quad
\mbox{ for every}
\quad
\psi \in T_{\mathscr{E}(\lambda)}\mathcal{C}(J_{\lambda_0}),$$
where 
$${\rm Var}(\psi, \nu_\lambda) \defeq \lim_{n \to \infty} \frac{1}{n} \int_{J_{\lambda_0}} \left( \sum_{i=0}^{n-1} \psi \circ f^i(x) \right)^2 d\nu_\lambda
\in [0,+\infty].$$
Finally,
we define the
\emph{pressure form $\|\cdot\|_{\mathcal P}$
on $\Omega$} 
as the pull-back
of $||\cdot||_{pm}$ by the map $\mathscr E$. Namely,
given $\vec{w} \in T_{\lambda}\Omega$, let $\gamma(t)\defeq [f_t]$,
$t \in (-1,1)$ be a smooth path in $\Omega$
with $\gamma(0)=\lambda$ and $\gamma'(0)= \vec{w}$.
Letting $\phi_t$ be a representative of the class $\mathscr{E}(\gamma(t))$,
we define
\begin{equation*}
\|\vec{w}\|_{\mathcal P}
\defeq ||\dot{\phi}_0||_{pm} =
\frac{{\rm Var}(\dot{\phi}_0,\nu_{\lambda_0})}{\int_{J_{\lambda_0}} \phi_0 d\nu_{\lambda_0}}.
\end{equation*}
Observe that 
$||\cdot||_{\mathcal{P}}$ is positive semi-definite on $T_\lambda\Omega$ since 
we have ${\rm Var}(\dot{\phi}_0,\nu_\lambda) \ge 0$
and $\int_{J_{\lambda_0}} \phi_0 d\nu_{\lambda_0} > 0$.

\bigskip

A computation as in 
\cite[Proposition 4.12]{BH24}
gives the equality
	$$||\vec{w}||_{\mathcal{P}}^2 = \frac{||\vec{w}||_G^2}{\int_{J_{\lambda_0}} \phi_0 d\nu_{\lambda_0}}.
 $$
Namely, the Hessian
form $\langle \cdot, \cdot \rangle_G$ is conformal equivalent
to the pressure form $\langle \cdot, \cdot \rangle_{\mathcal{P}}$. 
We also see that 
$||\vec{w}||_G = 0$
if and only if
$||\vec{w}||_{\mathcal{P}} =0$,
if and only if
${\rm Var}(\dot{\phi_0}, \nu) = 0$. It follows from standard facts in thermodynamical formalism that these conditions are also equivalent to the fact that 
$\dot{\phi_0}(x)$ is a $C^0$-coboundary, i.e., it is $C^0$-cohomologous to zero.
As in \cite{HeNie23},
we deduce from this last equivalence the following lemma. 

\begin{lem} \label{lem_K_equation}
If $||\vec{w}||_G=0$,
there exists a constant $K \in \bbR$ such that, for every $n\in\mathbb N$, we have 
\begin{align*}
\frac{d}{dt}\Big|_{t=0} 
S_n\big(
\log|f'_t\circ h_{{\gamma}(t)}(x)|\big)
&= K \cdot S_n
\big(
\log|f' \circ h_{{\gamma}(0)}(x)|\big)
\end{align*} 
for all 
$n$-periodic
points
$x$ of $f$ 
in $J_{\lambda_0}$.
Here $S_n\phi \defeq \phi + \phi \circ f + \cdots + \phi \circ f^{n-1}$
denotes the Birkhoff sum of $\phi$. 
\end{lem}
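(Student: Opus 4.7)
\textbf{Proof proposal for Lemma \ref{lem_K_equation}.}
The plan is to exploit the chain of equivalences recalled just before the statement of the lemma: $\|\vec{w}\|_G = 0$ is equivalent to $\dot{\phi}_0$ being a $C^0$-coboundary on $J_{\lambda_0}$. Since Birkhoff sums of coboundaries vanish on periodic orbits, this will force a linear relation between $S_n\bigl(\tfrac{d}{dt}|_{t=0}\log|f'_t\circ h_{\gamma(t)}|\bigr)(x)$ and $S_n\bigl(\log|f'\circ h_{\gamma(0)}|\bigr)(x)$, producing the constant $K$.

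First, I would fix the concrete representative $\phi_t \defeq -\delta(\gamma(t))\log|f'_{\gamma(t)}\circ h_{\gamma(t)}|$ of the class $\mathscr{E}(\gamma(t))$. By Corollary \ref{c:delta-ana} the function $t\mapsto \delta(\gamma(t))$ is real-analytic, and by the analyticity of the holomorphic motion together with the fact that the critical points in $J_{\lambda_0}$ are preserved under $h_\lambda$, the quantity $\log|f'_{\gamma(t)}\circ h_{\gamma(t)}(x)|$ depends smoothly on $t$ for every $x\in J_{\lambda_0}$ (and uniformly on compact subsets away from critical points). Differentiating yields
\[
\dot{\phi}_0 = -\delta'(0)\,\log|f'\circ h_{\gamma(0)}| \;-\; \delta(\gamma(0))\,\frac{d}{dt}\Big|_{t=0}\log\bigl|f'_t\circ h_{\gamma(t)}\bigr|.
\]

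Second, by the hypothesis $\|\vec{w}\|_G=0$, the class $[\dot\phi_0]$ in $T_{\mathscr{E}(\gamma(0))}\mathcal{C}(J_{\lambda_0})$ is trivial, so there exists a continuous function $g\colon J_{\lambda_0}\to\bbR$ with $\dot\phi_0 = g - g\circ f$. For any $n$-periodic point $x\in J_{\lambda_0}$ of $f = f_{\lambda_0}$, the sum $S_n(\dot\phi_0)(x) = g(x)-g(f^n(x))$ telescopes to $0$. Applying $S_n$ to the displayed identity and rearranging therefore gives
\[
\delta(\gamma(0))\,S_n\!\left(\frac{d}{dt}\Big|_{t=0}\log|f'_t\circ h_{\gamma(t)}|\right)\!(x) = -\delta'(0)\,S_n\!\bigl(\log|f'\circ h_{\gamma(0)}|\bigr)(x).
\]
Setting $K \defeq -\delta'(0)/\delta(\gamma(0))$ (which is well-defined since $\delta(\gamma(0))\in(0,2)$) yields the desired equality for every $n$ and every $n$-periodic point.

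The routine verifications are the smoothness of the differentiated Birkhoff sum and swapping $d/dt$ with $S_n$, which is immediate because $S_n$ is a finite sum and the dependence of each summand on $t$ is smooth. The only genuinely substantive ingredient is the equivalence between $\|\vec{w}\|_G=0$ and $\dot\phi_0$ being a coboundary; this is asserted in the paragraph preceding the lemma as a consequence of the standard thermodynamic characterization of vanishing asymptotic variance, and I would simply cite it. Thus no serious obstacle remains once the chain of equivalences and the telescoping identity are in place.
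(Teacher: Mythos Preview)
Your argument is correct and is exactly the approach the paper has in mind: the paper does not give a self-contained proof but simply writes ``As in \cite{HeNie23}, we deduce from this last equivalence the following lemma,'' where the ``last equivalence'' is precisely that $\|\vec w\|_G=0$ is equivalent to $\dot\phi_0$ being a $C^0$-coboundary. Your computation $\dot\phi_0=-\delta'(0)\log|f'\circ h_{\gamma(0)}|-\delta(\gamma(0))\,\tfrac{d}{dt}|_{t=0}\log|f'_t\circ h_{\gamma(t)}|$, the telescoping of $S_n(\dot\phi_0)$ on periodic orbits, and the resulting identification $K=-\delta'(0)/\delta(\gamma(0))$ are exactly what the reference \cite{HeNie23} does in the hyperbolic case, and nothing in that step uses hyperbolicity.
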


Given a $C^1$-path $\gamma \colon (0,1)\to \Omega$, we define the {\it length} $\ell_G(\gamma)$ of $\gamma$ as
$$\ell_G(\gamma) \defeq \int_0^1 \|\gamma'(t)\|_G dt.$$

\begin{prop}\label{p:non-deg-analytic-paths}
Let $\Omega$ be a bounded $\Lambda$-hyperbolic component of a Misiurewicz family $\Lambda$ in ${\rm poly}_D^{cm}$.
We have $\ell_G(\gamma)>0$ for any non-trivial
$C^1$-path $\gamma \colon (0,1)\to \Omega$.
\end{prop}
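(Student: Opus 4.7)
We argue by contradiction: suppose $\gamma\colon(0,1)\to\Omega$ is a non-trivial $C^1$-path with $\ell_G(\gamma)=0$. Since $t\mapsto\|\gamma'(t)\|_G$ is continuous and non-negative, this forces $\|\gamma'(t)\|_G=0$ for every $t\in(0,1)$. Apply Lemma~\ref{lem_K_equation} pointwise: at each base-point $\gamma(t_0)$, working with the shifted path $s\mapsto\gamma(t_0+s)$, we obtain a scalar $K(t_0)\in\mathbb R$ such that for every $n\ge 1$ and every $n$-periodic point $y$ of $f_{\gamma(t_0)}$,
\[
\frac{d}{ds}\Big|_{s=0}\log\bigl|(f_{\gamma(t_0+s)}^n)'(h_{\gamma(t_0+s)}(y))\bigr| \;=\; K(t_0)\,\log\bigl|(f_{\gamma(t_0)}^n)'(y)\bigr|.
\]

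To compute $K(t_0)$, I sum this identity over all $n$-periodic points, divide by $nD^n$, and let $n\to\infty$. Since $f_{\gamma(t_0)}$ permutes the $n$-periodic set, the inner Birkhoff sum telescopes after global summation, and the quantitative equidistribution from Lemma~\ref{lem:lyap}(1) converts both sides into $\frac{d}{ds}|_{s=0}L(\gamma(t_0+s))$ and $K(t_0)\cdot L(\gamma(t_0))$ respectively. Here the polynomial and boundedness hypotheses enter decisively: since $\Omega\Subset\Lambda\subseteq{\rm poly}_D^{cm}$, identity \eqref{eq_Lyap_const} gives $L\equiv\log D$ on $\Omega$, so the left-hand side vanishes while $L(\gamma(t_0))=\log D\neq 0$, forcing $K(t_0)=0$ for every $t_0$. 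Re-inserting $K\equiv 0$ into Lemma~\ref{lem_K_equation} shows that for every $n$ and every $n$-periodic $y$ of $f_{\gamma(0)}$, the map $t\mapsto\log\bigl|(f_{\gamma(t)}^n)'(h_{\gamma(t)}(y))\bigr|$ has identically zero derivative; hence the holomorphic multiplier
\[
\mu_{n,y}(\lambda) \;\defeq\; (f_\lambda^n)'(h_\lambda(y))
\]
has constant modulus along $\gamma$.

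The remaining and most delicate step is to upgrade this constancy into the conclusion that $\gamma$ is trivial. Each $\mu_{n,y}$ is holomorphic and non-vanishing on $\Omega$ (the periodic orbit being repelling), so $\log|\mu_{n,y}|$ is pluriharmonic, and its constancy along $\gamma$ translates into the real-linear condition $\Re\bigl(d\log\mu_{n,y}\bigr)(\gamma'(t))=0$ on each tangent vector $\gamma'(t)$. The plan is to show, using the persistent Misiurewicz critical relations together with the rigidity afforded by $L\equiv\log D$, that the family of holomorphic $(1,0)$-forms $\{d\log\mu_{n,y}\}_{n,y}$ spans enough of $T^*_\lambda\Omega$ that no non-zero tangent vector satisfies all these real constraints simultaneously---which would furnish the contradiction. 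This rigidity step is the main obstacle: whereas \cite{HeNie23} handled the uniformly hyperbolic rational case by invoking Oh--Winter's equidistribution of multipliers, here that tool is unavailable. I anticipate substituting it with a polynomial-specific argument along the lines of \cite{BH24}, exploiting the equivalence between $\|\vec w\|_G=0$ and $\dot\phi_0$ being a $C^0$-coboundary together with the pluriharmonicity of $L$ on the stable component $\Omega$.
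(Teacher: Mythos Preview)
Your argument up through the constancy of the multiplier moduli along $\gamma$ is essentially the paper's approach. The paper organizes the computation slightly differently---it first integrates the ODE from Lemma~\ref{lem_K_equation} to obtain a multiplicative relation $S_{n_{i_0}n_i}\log|f'_{\gamma(s)}(x_i(\gamma(s)))|=a_{i,i_0}e^{\widetilde K(s,s_0)}S_{n_{i_0}n_i}\log|f'_{\gamma(s_0)}(x_{i_0}(\gamma(s_0)))|$ and then shows $\widetilde K(s,s_0)$ is independent of $s$ by averaging via Lemma~\ref{lem:lyap}(1) and \eqref{eq_Lyap_const}. This has the mild advantage of avoiding your interchange of the $s$-derivative with the $n\to\infty$ limit, which you have not justified; but the two computations are morally the same, and in both cases the crucial input is $L\equiv\log D$ on $\Omega$.

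The genuine gap is your final paragraph. You correctly identify that one must pass from ``all multiplier moduli are constant along $\gamma$'' to ``$\gamma$ is trivial,'' but you leave this as a \emph{plan}: you propose to show that the forms $d\log\mu_{n,y}$ span enough of $T^*_\lambda\Omega$, and you describe this as ``the main obstacle'' for which you ``anticipate'' an argument. This is not a proof. The paper does not attempt any such spanning argument; it simply invokes the multiplier rigidity theorem of Ji--Xie \cite{JiXie23}, which says precisely that constancy of all multiplier moduli along a non-trivial family of rational maps is impossible (up to the obvious symmetries already quotiented out in ${\rm poly}_D^{cm}$). Once you have established that every $|\mu_{n,y}|$ is constant on $\gamma$, citing \cite{JiXie23} finishes the proof in one line.
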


\begin{proof}
The proof follows the same arguments as that of \cite[Proposition 5.3]{BH24}.
We give a sketch here to show how Lemma \ref{lem_K_equation} is used and to highlight that this is the place where we use the assumption that $\Omega$ is a {\it bounded} component of a family $\Lambda\subset{\rm poly}_D^{cm}$
(which implies 
that the Lyapunov exponent of the measure of maximal entropy is constant on $\Omega$; see \eqref{eq_Lyap_const}),
as well as the need of the more refined equidistribution property for the multipliers as in Lemma \ref{lem:lyap} (1).

\medskip

Suppose by contradiction that we have
$\ell_G(\gamma)=0$. 
We denote by $\{x_i(\lambda)\}_{i\ge1}$ the set of maps parametrizing the repelling periodic points on $\Omega$, and let $n_i$ be the period
of the corresponding cycle.
Fix $s_0\in (0,1)$ and an index $i_0$.
It follows from Lemma \ref{lem_K_equation} that, for every $i\ge 1$
and every $s\in (0,1)$, we have
\begin{equation*}
S_{n_{i_0} n_i} \big(
\log |f'_{\gamma(s)} (x_i (\gamma(s)))|
\big)
= a_{i,i_0} e^{\widetilde K (s,s_0)} 
S_{n_{i_0} n_i}
\big(
\log |f'_{\gamma(s_0)} (x_{i_0} (\gamma(s_0)))|\big)
\end{equation*}
for some strictly
positive
(as both $x_i(\gamma(s))$ and $x_{i_0}(\gamma(s_0))$
are repelling)
constants $a_{i,i_0}$.

\medskip

As $\Omega$ is bounded, the Lyapunov exponent of the measure of maximal entropy is constantly
equal to $\log D$ on $\Omega$ by \eqref{eq_Lyap_const}.
Hence, it follows from Lemma \ref{lem:lyap} (1)
that,
 for every $s \in (0,1)$,
 we  have
\begin{equation} \label{eq_tildeL}
\begin{aligned}
\log D
& =
\lim_{n\to \infty}
\frac{1}{n_{i_0} n} \frac{1}{D^{n_{i_0}n}}
\sum_{x_i\colon n_i =n_{i_0}n}
 a_{i,i_0} e^{\widetilde K (s,s_0)} 
 S_{n_{i_0} n} \big(
 \log |f'_{\gamma(s_0)} (x_{i_0} (\gamma(s_0)))|\big) \nonumber\\
& = e^{\widetilde K (s,s_0)} \cdot
 \left(
 \lim_{n\to \infty}
 \frac{1}{D^{n_{i_0}n}}
 \sum_{x_i\colon n_i=n}
a_{i,i_0} 
\right)
\cdot
\frac{1}{n_{i_0}}
S_{n_{i_0}} \big(
\log |f'_{\gamma(s_0)} (x_{i_0} (\gamma(s_0)))|\big).
\end{aligned}
\end{equation}
We deduce that the function $\widetilde K(s,s_0)$ is independent of $s$.
This shows that the absolute values of all the multipliers
of the $x_i (\gamma(s))$'s are constant along $\gamma$, which contradicts the rigidity results of \cite{JiXie23}.
\end{proof}

\section{Analyticity of the metric and a distance function on $\Omega$}\label{sec_5}
The goal of this section is to prove Theorem \ref{thm_main}. Our proof follows the general framework of \cite[Section 5]{BH24}. For the case of Misiurewicz maps, the main point is 
to prove that the 2-form $\langle \cdot, \cdot \rangle_G$ is analytic on the unit tangent bundle of $\Omega$. 
For every $k \in \mathbb N$, $z \in \mathbb C^k$,
and $r>0$, we denote by $D_{k}(z,r)$ the $k$-dimensional polydisk in $\mathbb{C}^k$ centered at $z$ with radius $r$. Recall that $\ell \defeq {\rm dim}_{\mathbb C}\Omega$.

\medskip

Let $\{\vec{v}_s\}_{s\in D_1(0,R_0)}$ be a holomorphic family of elements 
of
$ \mathbb{C}^\ell \setminus\{\vec{0}\}$, 
i.e., we assume that the map $s \mapsto 
\vec{v}_s\in \mathbb C^\ell \setminus\{\vec{0}\}$ is holomorphic.
Observe also that we can
identify $T_\lambda \Omega$ with 
$\mathbb C^\ell$ 
for every $\lambda \in D_\ell (0,R_0)$.
Consider the map $\gamma\colon D_{\ell}
(0,R_0)
\times D_1(0,R_0)\times D_1 (0,R_0)\to \mathbb C^\ell$ given by
\[
\gamma (\lambda, t,s ) = \lambda + t\vec{v}_s.
\]
Up to shrinking $R_0$, we can assume that the image of $\gamma$ is contained in $\Omega$. 
Moreover, it is clear from the definition that $\gamma$ satisfies
\[
\gamma(\lambda,0, s) = \lambda 
\quad \mbox{ and }
\quad
\frac{d}{dt}\Big|_{t=0}
\gamma (\lambda, t, s) = \vec{v}_s \in T_{\lambda}{\Omega}
\quad \mbox{ for all } \lambda \in D_\ell (0,R_0)
\mbox{ and }
s \in D_1 (0,R_0).
\]

For every $(\theta,\lambda, t,s) \in \bbR \times D_\ell(0,R_0)\times D_1(0,R_0)\times D_1(0,R_0)$, we also define
$$\phi_{(\theta,\lambda, t,s)} \defeq -\delta(\lambda)\log|f'_\lambda| + \theta\log |f'_{\gamma(\lambda, t,s)}\circ\Psi_{\lambda, t,s}| \colon J_\lambda \to \bbR$$
where we denote by $\Psi_{\lambda, t,s} \colon J_\lambda \to J_{\lambda,t,s}$ the conjugacy map induced by the holomorphic motion on $\Omega$. 
Using notations as in Section \ref{sec_3}, the pressure of $\phi_{(\theta,\lambda,t,s)}$ can be written as
\[\mathcal{P}(\phi_{(\theta, \lambda, t,s)})
= P (t_1,\lambda_1,t_2,\lambda_2)
\]
where  $\lambda_1 = \lambda$, $\lambda_2 = \gamma(\lambda, t,s)$
(which is analytic in the variables), 
$t_1 = \delta (\lambda_1)$
(which is analytic in $\lambda_1$ and so in $\lambda$),
and 
$t_2=\theta$.

\begin{prop} \label{prop_analyticity_form}
There exists $0<R<R_0$ such that the map 
$(-R,R)
\times D_\ell(0,R)\times
(-R,R)
\times D_1(0,R)\ni (\theta,\lambda,t,s) \mapsto \mathcal{P}(\phi_{(\theta,\lambda,t,s)})$ is real-analytic.
In particular, the map 
$D_\ell(0,R)\times
D_1 (0,R) \ni (\lambda,s)\mapsto (G_{\lambda}''(\lambda))
(\vec{v}_s, \vec{v}_s)$ 
is real-analytic.
\end{prop}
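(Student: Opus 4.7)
The plan is to deduce the first statement by composition from Theorem \ref{t:new-analytic-all}, and to obtain the second statement by identifying the Hessian form with repeated derivatives of the function from the first part.

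For the first statement, observe that by construction
\[
\mathcal P\big(\phi_{(\theta,\lambda,t,s)}\big) = P\big(\delta(\lambda),\,\lambda,\,\theta,\,\gamma(\lambda,t,s)\big),
\]
with $\gamma(\lambda,t,s)=\lambda+t\vec v_s$. The inner arguments are real-analytic in $(\theta,\lambda,t,s)$: $\delta$ is real-analytic by Corollary \ref{c:delta-ana}, $s\mapsto\vec v_s$ is holomorphic by hypothesis, and $\gamma$ is polynomial. Fix a base point $\lambda_*$ in the neighborhood (say $\lambda_*=0$) and let $R>0$ be the constant provided by Theorem \ref{t:new-analytic-all} at $\lambda_*$. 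Shrinking $R$ if necessary, the image of $(-R,R)\times B(\lambda_*,R)\times(-R,R)\times D_1(0,R)$ under the map $(\theta,\lambda,t,s)\mapsto(\delta(\lambda),\lambda,\theta,\gamma(\lambda,t,s))$ lies in the polydisc on which $P$ is real-analytic, and the composition yields real-analyticity of $\mathcal P(\phi_{(\theta,\lambda,t,s)})$.

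For the second statement, recall that $G_\lambda$ attains its minimum at $\lambda$, hence $DG_\lambda(\lambda)=0$, and the characterization recalled in Section \ref{sec_4} gives
\[
\big(G''_\lambda(\lambda)\big)(\vec v_s,\vec v_s) = \frac{d^2}{dt^2}\bigg|_{t=0}G_\lambda\big(\gamma(\lambda,t,s)\big).
\]
Unfolding the definition of $G_\lambda$,
\[
G_\lambda\big(\gamma(\lambda,t,s)\big) = \delta(\gamma(\lambda,t,s))\cdot\int_{J_\lambda}\log\big|f'_{\gamma(\lambda,t,s)}\circ\Psi_{\lambda,t,s}\big|\,d\nu_\lambda.
\]
The integral is not obviously analytic in $\lambda$ because the measure $\nu_\lambda$ itself varies with $\lambda$; we circumvent this by expressing it as a derivative of the pressure. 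Since $\nu_\lambda$ is the equilibrium state of $\phi_{(0,\lambda,t,s)}=-\delta(\lambda)\log|f'_\lambda|$, and Proposition \ref{p:MS-gen} provides a spectral gap for the associated transfer operator, the standard differentiability-of-pressure formula from thermodynamical formalism yields
\[
\frac{\partial}{\partial\theta}\bigg|_{\theta=0}\mathcal P(\phi_{(\theta,\lambda,t,s)}) = \int_{J_\lambda}\log\big|f'_{\gamma(\lambda,t,s)}\circ\Psi_{\lambda,t,s}\big|\,d\nu_\lambda,
\]
so that $G_\lambda(\gamma(\lambda,t,s))=\delta(\gamma(\lambda,t,s))\cdot\partial_\theta\mathcal P|_{\theta=0}(\phi_{(\theta,\lambda,t,s)})$. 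By the first part and Corollary \ref{c:delta-ana}, the right-hand side is real-analytic in $(\lambda,t,s)$, and taking $\partial^2/\partial t^2|_{t=0}$ gives real-analyticity in $(\lambda,s)$.

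The main obstacle is the identification of $\partial_\theta\mathcal P|_{\theta=0}$ with the integral against $\nu_\lambda$; this is a standard consequence of the spectral gap (Proposition \ref{p:MS-gen}) combined with the Kato-Rellich perturbation theorem already invoked in Section \ref{sec_3}, so no genuinely new ingredient is required beyond what has been developed.
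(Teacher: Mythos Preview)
Your proposal is correct and follows essentially the same approach as the paper's own proof: the first assertion is obtained by composing the real-analytic reparametrization $(\theta,\lambda,t,s)\mapsto(\delta(\lambda),\lambda,\theta,\gamma(\lambda,t,s))$ with Theorem~\ref{t:new-analytic-all}, and the second by differentiating the pressure in $\theta$ (to recover the integral against $\nu_\lambda$) and then twice in $t$. The paper's proof is terser and refers to \cite[Corollary 5.2]{BH24} for the second step, but your explicit identification $G_\lambda(\gamma(\lambda,t,s))=\delta(\gamma(\lambda,t,s))\cdot\partial_\theta\big|_{\theta=0}\mathcal P(\phi_{(\theta,\lambda,t,s)})$ is exactly the mechanism that reference encodes.
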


\begin{proof}
The first assertion is a 
direct 
consequence of Theorem \ref{t:new-analytic-all}. 
The second one conclusion follows by taking derivatives of the pressure with respect to $\theta$ and then
along tangent vectors, as in the proof of \cite[Corollary 5.2]{BH24}. This completes the proof.
\end{proof}

We can now give a proof of Theorem \ref{thm_main}.

\begin{proof}[Proof of Theorem \ref{thm_main}]
By Proposition \ref{prop_analyticity_form}, the pseudo-metric $d_G$ is determined by a family of positive semi-definite bilinear forms on $T_\lambda\Omega$ depending analytically on $\lambda \in \Omega$. By the same induction argument as in \cite[Section 5.4]{BH24}, we see that any continuous path between two points $x$ and $y$ having zero length must lie in an analytic real one-dimensional submanifold of $\Omega$. If such a path existed, it would contradict Proposition \ref{p:non-deg-analytic-paths}. The assertion follows.
\end{proof}

\printbibliography

\end{document}